\DeclareMathAlphabet{\mathpzc}{OT1}{pzc}{m}{it}
\newcommand{\dt}{\dot}
\newcommand{\ddt}{\ddot}
\newcommand\newcheck[1]{%
\savestack{\tmpbox}{\stretchto{%
  \scaleto{%
    \scalerel*[\widthof{\ensuremath{#1}}]{\kern-.6pt\bigwedge\kern-.6pt}%
    {\rule[-\textheight/2]{1ex}{\textheight}}
  }{\textheight}%
}{0.5ex}}%
\stackon[1pt]{#1}{\scalebox{-1}{\tmpbox}}%
}
\newcommand\newhat[1]{%
\savestack{\tmpbox}{\stretchto{%
  \scaleto{%
    \scalerel*[\widthof{\ensuremath{#1}}]{\kern-.6pt\bigwedge\kern-.6pt}%
    {\rule[-\textheight/2]{1ex}{\textheight}}
  }{\textheight}%
}{0.5ex}}%
\stackon[1pt]{#1}{\scalebox{1}{\tmpbox}}%
}
\def\cA{\mathscr{A}}
\def\cB{\mathscr{B}}
\def\cC{\mathscr{C}}
\def\cD{\mathscr{D}}
\def\cE{\mathscr{E}}
\def\cF{\mathscr{F}}
\def\cG{\mathscr{G}}
\def\cK{\mathscr{K}}
\def\cM{\mathscr{M}}
\def\cS{\mathscr{S}}
\def\cT{\mathscr{T}}
\def\cU{\mathscr{U}}
\def\cV{\mathscr{V}}
\def\cX{\mathscr{X}}
\def\cY{\mathscr{Y}}
\def\add{\operatorname{add}}
\def\adots{\mathinner{\mkern1mu\raise1.0pt\vbox{\kern7.0pt\hbox{.}}\mkern2mu\raise5.0pt\hbox{.}\mkern2mu\raise9.0pt\hbox{.}\mkern1mu}}
\def\b{\operatorname{b}}
\def\Coker{\operatorname{Coker}}
\newcommand\ConditionEShort[3]
\def\cone{\operatorname{cone}}
\def\dddots{\mathinner{\mkern1mu\raise10.0pt\vbox{\kern7.0pt\hbox{.}}\mkern2mu\raise5.3pt\hbox{.}\mkern2mu\raise1.0pt\hbox{.}\mkern1mu}}
\def\dddotssmall{\mathinner{\mkern1mu\raise7.0pt\vbox{\kern7.0pt\hbox{.}}\mkern-1mu\raise4pt\hbox{.}\mkern-1mu\raise1.0pt\hbox{.}\mkern1mu}}
\def\dim{\operatorname{dim}}
\def\dual{\operatorname{D}}
\def\End{\operatorname{End}}
\def\Ext{\operatorname{Ext}}
\def\Hom{\operatorname{Hom}}
\def\id{\operatorname{id}}
\def\Image{\operatorname{Im}}
\def\K{\operatorname{K}}
\def\K0{\operatorname{K}_0}
\def\Ker{\operatorname{Ker}}
\def\length{\operatorname{length}}
\def\mod{\operatorname{mod}}
\def\opp{\operatorname{op}}
\def\prj{\operatorname{prj}}
\def\PSL2{\operatorname{PSL}_2}
\def\SL2{\operatorname{SL}_2}
\numberwithin{equation}{section}
\newtheorem{Lemma}{Lemma}[section]
\newtheorem{Theorem}[Lemma]{Theorem}
\newtheorem{Proposition}[Lemma]{Proposition}
\theoremstyle{definition}
\newtheorem{Definition}[Lemma]{Definition}
\newtheorem{Setup}[Lemma]{Setup}
\newtheorem{Remark}[Lemma]{Remark}
\newtheorem{Example}[Lemma]{Example}
\newtheorem{ThmIntro}{Theorem}
\newtheorem*{bfhpg*}{}
\begin{document}

\setlength{\parindent}{0pt}
\setlength{\parskip}{7pt}

\title[Abelian subcategories induced by simple minded systems]{Abelian subcategories of triangulated categories induced by simple minded systems}

\author{Peter J\o rgensen}

\address{Department of Mathematics, Aarhus University, Ny Munkegade 118, 8000 Aarhus C, Denmark}
\email{peter.jorgensen@math.au.dk}

\urladdr{https://sites.google.com/view/peterjorgensen}


\keywords{Cluster category, derived category, heart, orbit category, simple minded collection, orthogonal collection, $t$-structure, tilting}
\subjclass[2020]{16G10, 16S90, 18E10, 18E40, 18G80}


\begin{abstract} 

If $k$ is a field, $A$ a finite dimensional $k$-algebra, then the simple $A$-modules form a {\em simple minded collection} in the derived category $\cD^{ \b }( \mod\,A )$.  Their extension closure is $\mod\,A$; in particular, it is abelian.  This situation is emulated by a general simple minded collection $\cS$ in a suitable triangulated category $\cC$.  In particular, the extension closure $\langle \cS \rangle$ is abelian, and there is a tilting theory for such abelian subcategories of $\cC$.  These statements follow from $\langle \cS \rangle$ being the heart of a bounded $t$-structure.

\medskip
\noindent
It is a defining characteristic of simple minded collections that their negative self extensions vanish in every degree.  Relaxing this to vanishing in degrees $\{ -w+1, \ldots, -1 \}$ where $w$ is a positive integer leads to the rich, parallel notion of {\em $w$-simple minded systems}, which have recently been the subject of vigorous interest.  

\medskip
\noindent
If $\cS$ is a $w$-simple minded system for some $w \geqslant 2$, then $\langle \cS \rangle$ is typically not the heart of a $t$-structure.  Nevertheless, using different methods, we will prove that $\langle \cS \rangle$ is abelian and that there is a tilting theory for such abelian subcategories.  Our theory is based on Quillen's notion of exact categories, in particular a theorem by Dyer which provides exact subcategories of triangulated categories.

\medskip
\noindent
The theory of simple minded systems can be viewed as ``negative cluster tilting theory''.  In particular, the result that $\langle \cS \rangle$ is an abelian subcategory is a negative counterpart to the result from (higher) positive cluster tilting theory that if $\cT$ is a cluster tilting subcategory, then $( \cT * \Sigma \cT )/[ \cT ]$ is an abelian quotient category.

\end{abstract}

\maketitle

\setcounter{section}{-1}
\section{Introduction}
\label{sec:introduction}

Let $k$ be a field, $A$ a finite dimensional $k$-algebra, and let $\cS$ be a {\em simple minded collection} in the bounded derived category of finitely generated $A$-modules $\cD^{ \b }( \mod\,A )$, see Definition \ref{def:SMS}.  We have $\Ext^{ <0 }( \cS,\cS ) = 0$ by definition, and a canonical example is that $\cS$ is the set of simple $A$-modules.  The extension closure $\langle \cS \rangle$ is abelian and there is a Happel--Reiten--Smal\o\ tilting theory for such abelian subcategories of $\cD^{ \b }( \mod\,A )$.  Indeed, these statements follow from $\langle \cS \rangle$ being the heart of a bounded $t$-structure, see
\cite[def.\ 3.2, prop.\ 5.4, and sec.\ 7.2]{KY}.

This paper will investigate {\em simple minded systems}, which were introduced in \cite[def.\ 2.1]{KL}, \cite[sec.\ 1.2]{CS}, and \cite[def.\ 2.1]{CSP1} as a rich, parallel notion to simple minded collections.  Let $w \geqslant 2$ be an integer, $Q$ a finite acyclic quiver, and let $\cS$ be a $w$-simple minded system in the negative cluster category $\cC_{ -w }( Q ) = \cD^{ \b }( \mod\,kQ ) / \tau\Sigma^{ w+1 }$, see Definition \ref{def:SMS}.  We have $\Ext^{ \ell }( \cS,\cS ) = 0$ for $\ell \in \{ -w+1, \ldots, -1 \}$ by definition, and a canonical example is that $\cS$ is the set of simple $kQ$-modules.  Among the main results of this paper are that the extension closure $\langle \cS \rangle$ is abelian and that there is a ``simple minded'' tilting theory for such abelian  subcategories of $\cC_{ -w }(Q)$, see Theorems \ref{thm:A}, \ref{thm:B}, and \ref{thm:C}.  These statements cannot be proven using $t$-structures.  Indeed, $\cC_{ -w }( Q )$ has no $t$-structures with non-zero heart because it is $(-w)$-Calabi--Yau, see \cite[sec.\ 5.1]{HJY} and \cite[sec.\ 8.4]{KellerOrbit}.  Instead, our theory is based on Quillen's notion of exact categories, in particular a theorem by Dyer which provides exact subcategories of triangulated categories, see \cite[p.\ 1]{Dyer} and Section \ref{sec:exact}.

That $\langle \cS \rangle$ is an abelian subcategory is a negative cluster analogue of $( \cT * \Sigma \cT )/[ \cT ]$ being an abelian quotient category when $\cT$ is a cluster tilting subcategory of a positive cluster category.  Figure \ref{fig:table} elaborates on this analogy.  Items with the same position in different cells correspond to each other.  The column titled ``Cluster categories'' explains that simple minded systems constitute ``negative cluster tilting theory''.  This viewpoint has recently been explored vigorously, see \cite{CS2}, \cite{CS}, \cite{CSP1}, \cite{CSPP}, \cite{D2}, \cite{D}, \cite{IJ}, \cite{Jin}.  The row titled ``Simple minded theory'' sums up what we have said about the correspondence between simple minded collections and simple minded systems.  The categories $\cC_{ -w }( Q )$ were first studied in \cite{CS3}, and the notion of negative cluster categories was coined in \cite[sec.\ 1.2]{CSPP} based on $\cC_{ -w }( Q )$ being $(-w)$-Calabi--Yau.  

Note that we will actually develop our theory in a suitable general triangulated category $\cC$, and that the results do {\em not} extend to the case $w=1$, see Remark \ref{rmk:Thm_A_does_not_extend}.  
\begin{figure}
\begingroup
\begin{center}
\renewcommand{\arraystretch}{1.3}
\setlength{\tabcolsep}{2pt}
\begin{tabular}{c||l|l}
  & \multicolumn{1}{c|}{Derived categories and} & \multicolumn{1}{c}{Cluster}  \\
  & \multicolumn{1}{c|}{homotopy categories} & \multicolumn{1}{c}{categories}  \\
  \hhline{=#=|=} 
  \begin{tabular}{c}
    Simple minded \\ theory
  \end{tabular}
  &
  \begin{tabular}{l}
    Derived category $\cD^{\b}( \mod\,A )$ \\
    Simple minded collection $\cS$ \\
    $\Ext^{ <0 }( \cS,\cS ) = 0$ \\
    Abelian subcategory $\langle \cS \rangle$ \\
    Happel--Reiten--Smal\o\ tilting  of $\cS$ \\
    $t$-structure with heart $\langle \cS \rangle$
  \end{tabular} 
  &
  \begin{tabular}{l}
    Negative cluster category $\cC_{ -w }(Q)$ for $w \geqslant 2$ \\
    $w$-simple minded system $\cS$ \\
    $\Ext^{ -w+1,\ldots,-1 }( \cS,\cS ) = 0$ \\    
    {\color{red} Abelian subcategory $\langle \cS \rangle$} \\
    {\color{red} Simple minded tilting of $\cS$} \\
    ?
  \end{tabular} 
  \\
  \hline
  \begin{tabular}{c}
    Silting and \\ cluster tilting \\ theory
  \end{tabular}
  &
  \begin{tabular}{l}
    Homotopy category $\cK^{ \b }( \prj\,A )$ \\
    Silting subcategory $\cM$ \\
    $\Ext^{ >0 }( \cM,\cM ) = 0$ \\
    Abelian quotient $( \cM * \Sigma \cM )/[ \cM ]$ \\
    Silting mutation of $\cM$ \\
    Co-$t$-structure with coheart $\cM$
  \end{tabular}
  &
  \begin{tabular}{l}
    Positive cluster category $\cC_{ -w }(Q)$ for $w \leqslant -2$ \\
    Cluster tilting subcategory $\cT$ \\
    $\Ext^{ 1,\ldots,-w-1 }( \cT,\cT ) = 0$ \\
    Abelian quotient $( \cT * \Sigma \cT )/[ \cT ]$ \\
    Cluster mutation of $\cT$ \\
    ?
  \end{tabular}
\end{tabular}
\end{center}
\endgroup
\caption{Some main items of ``simple minded'' theory, cluster tilting theory, and silting theory for different categories.  Items with the same position in different cells correspond to each other.  The $(-w)$-Calabi--Yau category $\cC_{ -w }( Q )$ is a negative cluster category if $w \geqslant 1$ and a positive cluster category if $w \leqslant -2$.  Red items represent the main results of this paper.}
\label{fig:table}
\end{figure}
\begin{figure}
\begingroup
\[
  \begin{tikzpicture}[xscale=1.05,yscale=0.65]

    \draw[thick] (-10,3) to (-2,3);
    \draw[thick] (-10,1) to (-2,1);
    \draw[thick] (-9,1) to (-9,3);
    \draw[thick] (-7,1) to (-7,3);
    \draw[thick] (-5,1) to (-5,3);
    \draw[thick] (-3,1) to (-3,3);
    \draw[thick,dotted] (-8.4,1) to (-7.6,3);
    \draw[thick,dotted] (-6.4,1) to (-5.6,3);
    \draw[thick,dotted] (-4.4,1) to (-3.6,3);
    \draw[black,fill=black,opacity=0.25] (-5,1) -- (-5,3) -- (-7,3) -- (-7,1) -- cycle;
    \draw [thick,decorate,decoration={brace,amplitude=2mm}] (-6.98,3.4) -- (-5.02,3.4);
    \draw (-6,4.5) node {$\langle \cS \rangle$};
    \draw [thick,decorate,decoration={brace,amplitude=2mm}] (-4.98,3.4) -- (-3.02,3.4);
    \draw (-4,4.5) node {$\Sigma^{-1}\langle \cS \rangle$};
    \draw (-6.5,2) node {$\cT$};
    \draw (-5.5,2) node {$\cF$};
    
    \draw[thick] (-10,-3) to (-2,-3);
    \draw[thick] (-10,-1) to (-2,-1);
    \draw[thick,dotted] (-9,-1) to (-9,-3);
    \draw[thick,dotted] (-7,-1) to (-7,-3);
    \draw[thick,dotted] (-5,-1) to (-5,-3);
    \draw[thick,dotted] (-3,-1) to (-3,-3);
    \draw[thick] (-8.4,-3) to (-7.6,-1);
    \draw[thick] (-6.4,-3) to (-5.6,-1);
    \draw[thick] (-4.4,-3) to (-3.6,-1);
    \draw[black,fill=black,opacity=0.25] (-4.4,-3) -- (-3.6,-1) -- (-5.6,-1) -- (-6.4,-3) -- cycle;
    \draw [thick,decorate,decoration={brace,mirror,amplitude=2mm}] (-6.38,-3.4) -- (-4.42,-3.4);
    \draw (-5.4,-4.5) node {$\langle L_{ \cS' }( \cS ) \rangle$};
    \draw (-5.5,-1.99) node {$\cF$};
    \draw (-4.46,-1.96) node {$\Sigma^{-1}\!\cT$};
    
    \draw[thick] (-10+9,3) to (-2+9,3);
    \draw[thick] (-10+9,1) to (-2+9,1);
    \draw[thick] (-9+9,1) to (-9+9,3);
    \draw[thick] (-7+9,1) to (-7+9,3);
    \draw[thick] (-5+9,1) to (-5+9,3);
    \draw[thick] (-3+9,1) to (-3+9,3);
    \draw[thick,dotted] (-8.4+9,1) to (-7.6+9,3);
    \draw[thick,dotted] (-6.4+9,1) to (-5.6+9,3);
    \draw[thick,dotted] (-4.4+9,1) to (-3.6+9,3);
    \draw[black,fill=black,opacity=0.25] (-5+9,1) -- (-5+9,3) -- (-7+9,3) -- (-7+9,1) -- cycle;
    \draw [thick,decorate,decoration={brace,amplitude=2mm}] (-6.98+9,3.4) -- (-5.02+9,3.4);
    \draw (-6+9,4.5) node {$\langle \cS \rangle$};
    \draw [thick,decorate,decoration={brace,amplitude=2mm}] (-4.98+5,3.4) -- (-3.02+5,3.4);
    \draw (-4+5,4.5) node {$\Sigma\langle \cS \rangle$};
    \draw (-6.5+9,2) node {$\cU$};
    \draw (-5.5+9,2) node {$\cG$};

    \draw[thick] (-10+9,-3) to (-2+9,-3);
    \draw[thick] (-10+9,-1) to (-2+9,-1);
    \draw[thick,dotted] (-9+9,-1) to (-9+9,-3);
    \draw[thick,dotted] (-7+9,-1) to (-7+9,-3);
    \draw[thick,dotted] (-5+9,-1) to (-5+9,-3);
    \draw[thick,dotted] (-3+9,-1) to (-3+9,-3);
    \draw[thick] (-8.4+9,-3) to (-7.6+9,-1);
    \draw[thick] (-6.4+9,-3) to (-5.6+9,-1);
    \draw[thick] (-4.4+9,-3) to (-3.6+9,-1);
    \draw[black,fill=black,opacity=0.25] (-4.4+7,-3) -- (-3.6+7,-1) -- (-5.6+7,-1) -- (-6.4+7,-3) -- cycle;
    \draw [thick,decorate,decoration={brace,mirror,amplitude=2mm}] (-6.38+7,-3.4) -- (-4.42+7,-3.4);
    \draw (-5.4+7,-4.5) node {$\langle R_{ \cS' }( \cS ) \rangle$};
    \draw (-6.5+9,-2) node {$\cU$};
    \draw (-5.45+7,-2) node {$\Sigma\cG$};

  \end{tikzpicture} 
\]
\endgroup
\caption{The left and right parts of this figure illustrate left and right tilting.  In each case there is a proper abelian subcategory of $\cC$, shown in grey, with a torsion pair as indicated.  Similar schematics have appeared in the literature, see e.g.\ \cite[p.\ 417]{R} or \cite[fig.\ 1]{W} which is the immediate inspiration for our figure.}
\label{fig:tilting}
\end{figure}
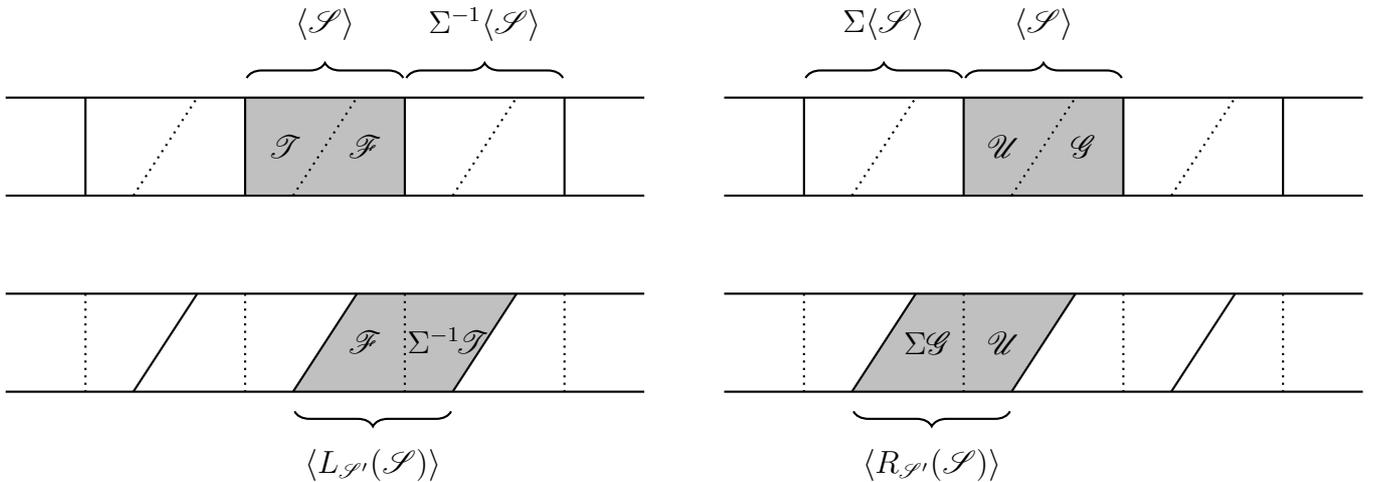

\smallskip
\subsection{Setup and a key definition}~\\
\label{subsec:setup}
\vspace{-2.4ex}

Before formulating the main results of the paper, we give a blanket setup and a key definition, which includes the definition of $w$-simple minded systems.

\begin{Setup}
\label{set:blanket}
Throughout the paper, $k$ is a field, $\cC$ an essentially small $k$-linear $\Hom$-finite triangulated category with split idempotents.  The suspension functor is denoted by $\Sigma$.
\end{Setup}

\begin{Definition}
[$w$-orthogonal collections, simple minded collections, and $w$-simple minded systems]
\label{def:SMS}
Let $w$ be a positive integer or $\infty$.
\begin{itemize}
\setlength\itemsep{4pt}

  \item  A {\em $w$-orthogonal collection} is a set $\cS = \{ s_1, \ldots, s_e \}$ of objects of $\cC$ satisfying the following two conditions, see \cite[sec.\ 1.2]{CS} and \cite[sec.\ 5]{Rickard}.
\medskip
\begin{enumerate}[leftmargin=40pt]
\setlength\itemsep{8pt}

  \item  $\cC( s_i,s_j )
= \left\{
    \begin{array}{cl}
      \mbox{a skew field} & \mbox{if $i=j$,} \\[1.5mm]
      0 & \mbox{if $i \neq j$}.
    \end{array}
  \right.$

  \item  $\cC( s_i,\Sigma^{ \ell }s_j ) = 0$ for any $i,j$ and $\ell \in \{ -w+1, \ldots, -1 \}$.  (This condition is void for $w=1$.)

\end{enumerate}

  \item  A {\em simple minded collection} is an $\infty$-orthogonal collection which also satisfies the following condition, see \cite[def.\ 3.2]{KY}.
\medskip
\begin{itemize}[leftmargin=40pt]
\setlength\itemsep{4pt}

  \item[(iii)]  Each object of $\cC$ is finitely built from $\cS$. 

\end{itemize}

  \item  A {\em $w$-simple minded system} for a positive integer $w$ is a $w$-orthogonal collection which also satisfies the following condition, see \cite[sec.\ 1.2]{CS} and \cite[def.\ 2.1]{CSP1}.  Here $\langle - \rangle$ denotes extension closure and the operation $*$ is explained in Definition \ref{def:star}.
\medskip
\begin{itemize}[leftmargin=40pt]
\setlength\itemsep{4pt}

  \item[(iii)']  $\cC = \langle \cS \rangle * \Sigma^{ -1 } \langle \cS \rangle * \cdots * \Sigma^{ -w+1 }\langle \cS \rangle$.
  
\end{itemize}

\end{itemize}
\end{Definition}

\begin{Remark}
A $w$-orthogonal collection is $w'$-orthogonal for each $w' < w$.  
\end{Remark}

\smallskip
\subsection{Abelian subcategories}~\\
\label{subsec:abelian}
\vspace{-2.4ex}

If $\cS$ is an $\infty$-orthogonal collection, then it is well known that $\langle \cS \rangle$ is abelian, see \cite[cor.\ 3 and prop.\ 4]{A}, \cite[prop.\ 5.4]{KY}, \cite[thm.\ 4.4]{Sch}, and \cite[rem.\ 1.3.14]{BBD}.  In this paper, $\cS$ is only a $w$-orthogonal collection for a positive integer $w$, but we will show that, nevertheless, $\langle \cS \rangle$ is abelian for $w \geqslant 2$.  The following is the first main result of the paper.  It follows from Theorem \ref{thm:16c} and Remark \ref{rmk:AB}.

\begin{ThmIntro}
\label{thm:A}
Let $w \geqslant 2$ be an integer, $\cS$ a $w$-orthogonal collection in $\cC$.  Then the extension closure $\langle \cS \rangle \subseteq \cC$ is a proper abelian subcategory where each object has finite length, and the simple objects are those in $\cS$ up to isomorphism.
\end{ThmIntro}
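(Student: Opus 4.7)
The plan is to build on Dyer's theorem from Section \ref{sec:exact}, which endows a suitable extension-closed subcategory of a triangulated category with the structure of an exact category. Since $\langle \cS \rangle$ is extension-closed in $\cC$ by construction and $\cC$ has split idempotents, Dyer's theorem will give $\langle \cS \rangle$ an exact structure whose conflations are precisely the distinguished triangles $A \to B \to C \to \Sigma A$ of $\cC$ with all three of $A, B, C$ in $\langle \cS \rangle$. Verifying the hypotheses of Dyer's theorem for $\langle \cS \rangle$ is the first step and should be essentially formal.

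The crucial step is then to promote this exact structure to an \emph{abelian} one, and this is where $w \geqslant 2$ is used. From condition (ii) of Definition \ref{def:SMS} one has $\cC(s_i, \Sigma^{-1} s_j) = 0$; by induction on the number of extensions used to build objects of $\langle \cS \rangle$, and using the long exact $\Hom$-sequences coming from the triangles in $\cC$, one upgrades this to
\[
  \cC\bigl( \langle \cS \rangle, \Sigma^{-1} \langle \cS \rangle \bigr) = 0.
\]
Given a morphism $f : A \to B$ in $\langle \cS \rangle$, complete it to a triangle $K \to A \to B \to \Sigma K$ in $\cC$. Using filtrations $0 = A_0 \subset A_1 \subset \cdots \subset A_n = A$ with subquotients in $\cS$ (and analogously for $B$), together with the Hom-vanishing above, I would argue inductively that $K$ may be replaced by an object of $\langle \cS \rangle$ serving as the kernel of $f$ in the exact category, and dually for the cokernel. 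The vanishing ensures that certain would-be obstruction maps $\cC(A_i/A_{i-1}, \Sigma^{-1}(\mbox{piece of } B))$ are zero, so that at each stage the constructed morphism lifts compatibly. Once one has kernels, cokernels, and the canonical coimage-to-image comparison is an isomorphism --- which follows by chasing the associated triangles and again invoking the Hom-vanishing --- the exact category $\langle \cS \rangle$ is abelian, and it is a \emph{proper} abelian subcategory of $\cC$ because its short exact sequences are genuine triangles.

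For the final claims, each $s_i \in \cS$ is simple: any conflation $X \to s_i \to Y$ in $\langle \cS \rangle$ induces, via condition (i) and the Hom-vanishing between distinct $s_j$'s applied to the pieces of the filtrations of $X$ and $Y$, a splitting forcing $X = 0$ or $Y = 0$. Conversely, the definition of extension closure gives each object of $\langle \cS \rangle$ a finite filtration with subquotients in $\cS$, hence finite length; Jordan--H\"older then identifies the composition factors, up to isomorphism, with members of $\cS$.

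The main obstacle is the kernel/cokernel construction of the second paragraph: without a $t$-structure available (the category $\cC_{-w}(Q)$ has none with non-zero heart), one cannot use truncation functors, and instead the third vertex of the triangle completing a morphism must be ``corrected'' by hand into $\langle \cS \rangle$. The condition $w \geqslant 2$ enters exactly here, through the vanishing $\cC(\langle \cS \rangle, \Sigma^{-1} \langle \cS \rangle) = 0$; its failure for $w = 1$ (flagged in Remark \ref{rmk:Thm_A_does_not_extend}) is precisely why the theorem does not extend.
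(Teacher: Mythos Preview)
Your framework is correct --- Dyer's theorem gives the exact structure, and the vanishing $\cC(\langle \cS \rangle, \Sigma^{-1}\langle \cS \rangle) = 0$ is the essential consequence of $w \geqslant 2$ --- but the passage from ``exact'' to ``abelian'' in your second paragraph is where the real content lies, and your sketch does not supply it. Completing $f : A \to B$ to a triangle and then ``correcting $K$ by hand into $\langle \cS \rangle$ using filtrations and vanishing obstruction maps'' is not a workable mechanism: the cone of $f$ lies in $\langle \cS \rangle * \Sigma\langle \cS \rangle$, and nothing in your argument explains how to split off a $\Sigma\langle \cS \rangle$-part cleanly. The missing ingredient is the inclusion
\[
  \langle \cS \rangle * \Sigma\langle \cS \rangle \;\subseteq\; \Sigma\langle \cS \rangle * \langle \cS \rangle,
\]
which is a nontrivial result from \cite[lem.\ 2.7]{CSP1}. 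This is exactly what lets one decompose the cone as required; without it, your inductive filtration idea has no traction.

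The paper's route is organised differently and avoids correcting cones directly. First (Lemma \ref{lem:16b}) an induction on the filtration length of the source factors every morphism in $\langle \cS \rangle$ as a deflation followed by a categorical monomorphism; this step is close in spirit to what you propose. The star-product inclusion above then enters via Lemma \ref{lem:5b}, which shows that every categorical monomorphism in $\langle \cS \rangle$ is an inflation. Combining these, every morphism is a deflation followed by an inflation, and \cite[ex.\ 8.6(i)]{B} identifies such exact categories as abelian. Your argument for simplicity of the $s_i$ is also loose: a nonzero conflation $X \to s_i \to Y$ need not ``split''; the paper instead uses \cite[lem.\ 2.6]{D} to show $\Sigma^{-1}Y \in \langle \cS \rangle$, whence $\cC(Y,Y) = 0$ by the Hom-vanishing, forcing $Y = 0$.
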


Note that the theorem includes the case of $\cS$ being a $w$-simple minded system for $w \geqslant 2$.  A {\em proper abelian subcategory of $\cC$} is an abelian subcategory whose short exact sequences are precisely those given by triangles in $\cC$, see Definition \ref{def:short_triangles_and_good_abelian_subcategories}(ii).  The classic example of a proper abelian subcategory is the heart of a $t$-structure, see \cite[thm.\ 1.3.6]{BBD}, but as explained at the start of the introduction, in this paper, $\langle \cS \rangle$ is not the heart of a $t$-structure.  Note that Theorem \ref{thm:A} does not extend to $1$-orthogonal collections, see Remark \ref{rmk:Thm_A_does_not_extend}.  

The following is the second main result of the paper.  It shows that if $\cS$ is a $w$-simple minded system, then we can say considerably more about $\langle \cS \rangle$.  It follows from Theorem \ref{thm:20} and Remark \ref{rmk:AB}.

\begin{ThmIntro}
\label{thm:B}
Let $w \geqslant 2$ be an integer, $\cS$ a $w$-simple minded system in $\cC$.  Then $\langle \cS \rangle \simeq \mod\,A$ for a finite dimensional $k$-algebra $A$.  If $k$ is algebraically closed, then the Gabriel quiver of $A$ has vertices the elements of $\cS = \{ s_1, \ldots, s_e \}$ and $\dim_k \cC( s_j,\Sigma s_i )$ arrows from $s_i$ to $s_j$. 
\end{ThmIntro}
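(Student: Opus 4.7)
The plan is to reduce Theorem B to Gabriel's classical recognition criterion for finite dimensional module categories. By Theorem A, $\langle \cS \rangle$ is a proper abelian subcategory of $\cC$ whose objects have finite length and whose simples are, up to isomorphism, the elements of $\cS = \{ s_1, \ldots, s_e \}$. Since $\cC$ is $k$-linear, $\Hom$-finite, essentially small and (via split idempotents) Krull--Schmidt by Setup \ref{set:blanket}, these properties transfer to $\langle \cS \rangle$. Consequently $\langle \cS \rangle$ is a $k$-linear, $\Hom$-finite, essentially small, abelian length category with finitely many simples, and Gabriel's theorem guarantees an equivalence $\Hom_{\langle \cS \rangle}( P, - ) \colon \langle \cS \rangle \xrightarrow{\sim} \mod\,A$ for $A := \End_{\langle \cS \rangle}( P )^{\opp}$ as soon as $\langle \cS \rangle$ admits a projective generator $P = \bigoplus_i P_i$ with $P_i \twoheadrightarrow s_i$ the projective cover.

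The main obstacle is therefore producing projective covers inside $\langle \cS \rangle$. This is where the full strength of $\cS$ being a $w$-simple minded system (as opposed to a mere $w$-orthogonal collection) must be used, via the filtration condition (iii)': $\cC = \langle \cS \rangle \ast \Sigma^{-1}\langle \cS \rangle \ast \cdots \ast \Sigma^{-w+1}\langle \cS \rangle$. I would attempt to construct $P_i$ by iterated universal extensions inside $\langle \cS \rangle$: start with $X_0 := s_i$, and as long as $\Ext^{1}_{\langle \cS \rangle}( X_n, s_j ) \ne 0$ for some $j$, form the universal extension of $X_n$ by $\dim_k \Ext^{1}_{\langle \cS \rangle}( X_n, s_j )$ copies of $s_j$ to obtain $X_{n+1} \in \langle \cS \rangle$ killing that particular $\Ext$. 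The delicate point is termination: the sequence must stabilise at some $P_i$ with $\Ext^{1}_{\langle \cS \rangle}( P_i, s_j ) = 0$ for every $j$. This is where (iii)' and the exact structure on $\langle \cS \rangle$ produced by Dyer's theorem (Section \ref{sec:exact}) should furnish the required length bound, either directly from the finiteness of the filtration length $w$ or, in the cluster setting, via the $(-w)$-Calabi--Yau duality $\cC( X, \Sigma s_j ) \cong D\,\cC( s_j, \Sigma^{-w+1} X )$ which forces $\Ext^{1}$-vanishing once $X$ has exhausted its $\Sigma^{-w+1}\langle \cS \rangle$-component.

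Granting the projective generator $P$, the algebra $A = \End_{\langle \cS \rangle}( P )^{\opp}$ is finite dimensional by $\Hom$-finiteness of $\cC$, and $\Hom_{\langle \cS \rangle}( P, - )$ is an equivalence $\langle \cS \rangle \simeq \mod\,A$ by Gabriel's theorem. When $k$ is algebraically closed, $\cC( s_i, s_i )$ is a finite dimensional division $k$-algebra and therefore equals $k$, so $A$ is basic with simples $S_i \leftrightarrow s_i$ indexed by $\cS$.

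For the Gabriel quiver, the standard recipe gives the number of arrows $s_i \to s_j$ as $\dim_k \Ext^{1}_A( S_j, S_i )$, and transporting through the equivalence this becomes $\dim_k \Ext^{1}_{\langle \cS \rangle}( s_j, s_i )$. Since $\langle \cS \rangle$ is a \emph{proper} abelian subcategory of $\cC$, its short exact sequences are precisely the triangles of $\cC$ with all three terms in $\langle \cS \rangle$. Given $f \colon s_j \to \Sigma s_i$ in $\cC$, any triangle $s_i \to e \to s_j \xrightarrow{ f } \Sigma s_i$ has middle term $e$ built by a single extension from $s_i$ and $s_j$, hence lying in $\langle \cS \rangle$; Baer sums match on both sides, identifying $\Ext^{1}_{\langle \cS \rangle}( s_j, s_i ) \cong \cC( s_j, \Sigma s_i )$ and yielding the claimed count $\dim_k \cC( s_j, \Sigma s_i )$ of arrows from $s_i$ to $s_j$.
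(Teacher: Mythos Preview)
Your overall architecture matches the paper's: reduce to producing a projective generator in $\langle \cS \rangle$, invoke a Gabriel--Auslander recognition theorem for $\mod\,A$, and read off the Gabriel quiver via $\Ext^1_{\langle \cS \rangle}(s_j,s_i) \cong \cC(s_j,\Sigma s_i)$. The last identification is exactly Theorem~\ref{thm:16c}(iv) (which is Dyer's Theorem~\ref{thm:Dyer}(ii)), so that part is fine.

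The genuine gap is the construction of enough projectives. Your proposed iterated universal extension scheme has no termination argument: each step increases length, and nothing you cite bounds the length of a would-be projective cover. Condition~(iii)' bounds the filtration length of objects of $\cC$ by shifted copies of $\langle \cS \rangle$, not the length of objects inside $\langle \cS \rangle$; in particular it says nothing about how many radical layers a projective in $\langle \cS \rangle$ might have. Your alternative via $(-w)$-Calabi--Yau duality is unavailable: Theorem~\ref{thm:B} does \emph{not} assume $\cC$ is Calabi--Yau (that hypothesis enters only in Theorem~\ref{thm:C}), and in any case the phrase ``exhausted its $\Sigma^{-w+1}\langle \cS \rangle$-component'' is meaningless for $X \in \langle \cS \rangle$.

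The paper sidesteps the iteration entirely. The key input you are missing is that for a $w$-simple minded system, $\Sigma\langle \cS \rangle$ is functorially finite in $\cC$ (Lemma~\ref{lem:functorial_finiteness}, relying on \cite{CSP1} and \cite{D}); this is where (iii)' is actually used. Given that, Proposition~\ref{pro:9} produces projectives in a single step: for $a \in \langle \cS \rangle$, take a $\Sigma\langle \cS \rangle$-envelope $a \to \Sigma a'$ and complete to a triangle $a' \to q \to a \to \Sigma a'$; then $q \in \langle \cS \rangle$ by extension closure, and the triangulated Wakamatsu lemma gives $\cC(q,\Sigma\langle \cS \rangle) = 0$, so $q$ is projective by Theorem~\ref{thm:Dyer}(iii). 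No iteration, no termination issue. Once enough projectives are in hand, the paper proceeds as you do (Theorem~\ref{thm:20}).
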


\smallskip
\subsection{Simple minded tilting}~\\
\label{subsec:tilting}
\vspace{-2.4ex}

Stating the third main result of the paper requires some preparation.

If $w \geqslant 2$ is an integer, $\cS$ a $w$-simple minded system, $s' \in \cS$ an object, then $s'$ is simple in the abelian category $\langle \cS \rangle$ by Theorem \ref{thm:A}.  The extension closure $\langle s' \rangle$ is the subcategory of $\langle \cS \rangle$ consisting of objects with all composition factors equal to $s'$.  It is a minimal non-zero torsion class and $\langle \cS \rangle$ has a torsion pair $( \cT,\cF )$ with $\cT = \langle s' \rangle$.  

If $\langle \cS \rangle$ were the heart of a $t$-structure, then \cite[prop.\ 2.1]{HRS} would provide a left tilted proper abelian subcategory which had the torsion pair $( \cF,\Sigma^{-1}\cT )$.  Right tilting would be a mirror version.

In this paper, $\langle \cS \rangle$ is not the heart of a $t$-structure, but we will show that, nevertheless, if $\cC$ is $(-w)$-Calabi--Yau then tilting still works the same way.  To wit, there is a left tilted $w$-simple minded system $L_{ s' }( \cS )$ such that the proper abelian subcategory $\langle L_{ s' }( \cS ) \rangle$ has the torsion pair $( \cF,\Sigma^{-1}\cT )$, see Figure \ref{fig:tilting}.
Right tilting is a mirror version.

Tilted $w$-simple minded systems are defined as follows, see \cite[sec.\ 4]{CS}, \cite[def.\ 3.1]{CSP1}, \cite[def.\ 6.1]{D2}, \cite[def.\ 7.5]{KY}, \cite[sec.\ 2.2]{W}.  Note that we permit tilting at any subset of $\cS$, which requires no more work than tilting at $s'$.

\begin{Definition}
[Simple minded tilting]
\label{def:tilting}
Let $\cS$ be a $w$-orthogonal collection in $\cC$ and let $\cS',\cS'' \subseteq \cS$ be subsets.
\begin{enumerate}
\setlength\itemsep{10pt}

  \item  We say that {\em the left tilt $L_{ \cS ' }( \cS )$ of $\cS$ at $\cS'$ is defined} if each $s \in \cS \setminus \cS'$ has a $( \Sigma^{ -1 } \langle \cS' \rangle )$-cover $\Sigma^{-1}t_s \xrightarrow{ \tau_s } s$, which we complete to the following triangle.
\begin{equation}
\label{equ:L_triangle}
  \Sigma^{ -1 }t_s \xrightarrow{ \tau_s } s \xrightarrow{} L_{ \cS' }( s ) \xrightarrow{} t_s
\end{equation}
In this case $L_{ \cS' }( \cS )$ is the set consisting of the objects
\[
  L_{ \cS' }( s ) =
  \left\{
    \begin{array}{cl}
      \mbox{the object defined by \eqref{equ:L_triangle}} & \mbox{if $s \in \cS \setminus \cS'$,} \\[1.5mm]
      \Sigma^{ -1 }s & \mbox{if $s \in \cS'$.} \\      
    \end{array}
  \right.
\]

  \item  We say that {\em the right tilt $R_{ \cS'' }( \cS )$ of $\cS$ at $\cS''$ is defined} if each $s \in \cS \setminus \cS''$ has a $( \Sigma \langle \cS'' \rangle )$-envelope $s \xrightarrow{ \gamma_s } \Sigma g_s$, which we complete to the following triangle.
\begin{equation}
\label{equ:R_triangle}
  g_s \xrightarrow{} R_{ \cS'' }( s ) \xrightarrow{} s \xrightarrow{ \gamma_s } \Sigma g_s
\end{equation}
In this case $R_{ \cS'' }( \cS )$ is the set consisting of the objects
\[
  R_{ \cS'' }( s ) =
  \left\{
    \begin{array}{cl}

      \mbox{the object defined by \eqref{equ:R_triangle}} & \mbox{if $s \in \cS \setminus \cS''$,} \\[1.5mm]
      \Sigma s & \mbox{if $s \in \cS''$.} \\      
    \end{array}
  \right.
\]

\end{enumerate}
\end{Definition}

Note that left and right tilting are inverse operations, see Proposition \ref{pro:left_tilting_v_right_tilting}, and that if $\cS$ is a $w$-simple minded system, then left and right tilts are defined because $( \Sigma^{ -1 } \langle \cS' \rangle )$-covers and $( \Sigma \langle \cS'' \rangle )$-envelopes exist by Lemma \ref{lem:functorial_finiteness}.

We will use the notation
\[
  \cX^{ \perp } = \{\, c \in \cC \mid \cC( \cX,c ) = 0 \,\}
  \;\;,\;\;
  {}^{ \perp }\cX = \{\, c \in \cC \mid \cC( c,\cX ) = 0 \,\}
\]
when $\cX$ is a class of objects of $\cC$.  The following is the third main result of the paper.  It follows from Theorems \ref{thm:37}, \ref{thm:37_prime}, and Remark \ref{rmk:C}.

\begin{ThmIntro}
\label{thm:C}
Let $w \geqslant 2$ be an integer, $\cS$ a $w$-simple minded system in $\cC$, and assume that $\cC$ is $(-w)$-Calabi--Yau.  Let $\cS',\cS'' \subseteq \cS$ be subsets.

The left tilt $L_{ \cS' }( \cS )$ and the classes $\cT = \langle \cS' \rangle$, $\cF = \langle \cS' \rangle^{ \perp } \cap \langle \cS \rangle$ have the following properties.
\begin{enumerate}
\setlength\itemsep{4pt}

  \item  $L_{ \cS' }( \cS )$ is a $w$-simple minded system.

  \item  The abelian category $\langle \cS \rangle$ has the torsion pair $( \cT,\cF )$.

  \item  The abelian category $\langle L_{ \cS' }( \cS ) \rangle$ has the torsion pair $( \cF,\Sigma^{-1}\cT )$.

\end{enumerate}

The right tilt $R_{ \cS'' }( \cS )$ and the classes $\cU = {}^{ \perp }\langle \cS'' \rangle  \cap \langle \cS \rangle$, $\cG = \langle \cS'' \rangle$ have the following properties.
\begin{enumerate}
\setlength\itemsep{4pt}

  \item  $R_{ \cS'' }( \cS )$ is a $w$-simple minded system.

  \item  The abelian category $\langle \cS \rangle$ has the torsion pair $( \cU,\cG )$.

  \item  The abelian category $\langle R_{ \cS'' }( \cS ) \rangle$ has the torsion pair $( \Sigma\cG,\cU )$.

\end{enumerate}
\end{ThmIntro}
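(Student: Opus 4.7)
The plan is to derive all statements from Theorems~\ref{thm:A} and~\ref{thm:B} by a careful analysis of the defining tilting triangles, using the $(-w)$-Calabi--Yau structure to control Hom groups of negative degree. The right-tilt half is a mirror of the left-tilt half and can be proved by an analogous argument (or reduced to it via Proposition~\ref{pro:left_tilting_v_right_tilting}, which makes left and right tilting mutually inverse operations), so I focus on the left tilt.

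I would start with (ii), the easy torsion pair. By Theorem~\ref{thm:B}, $\langle \cS \rangle \simeq \mod A$ for some finite-dimensional algebra $A$ whose simples are identified with $\cS$. The class $\cT = \langle \cS' \rangle$ is exactly the full subcategory of objects whose composition factors all lie in $\cS'$, hence a Serre subcategory of a finite-length abelian category, and so a torsion class whose torsion-free complement in $\langle \cS \rangle$ is $\cF = \cT^\perp \cap \langle \cS \rangle$.

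The bulk of the work is (i): showing that $L_{\cS'}(\cS)$ is a $w$-simple minded system. For $w$-orthogonality I apply $\cC(-,-)$ to the defining triangle $\Sigma^{-1}t_s \to s \to L_{\cS'}(s) \to t_s$ in each slot, with a case split on whether $s,s''$ lie in $\cS'$. The inputs are: first, the Ext-vanishing $\cC(X,\Sigma^\ell Y) = 0$ for $X,Y \in \langle \cS \rangle$ and $\ell \in \{-w+1,\ldots,-1\}$, obtained from $w$-orthogonality of $\cS$ by induction on extensions; second, the cover property, which gives surjectivity of $\cC(u,t_s) \to \cC(u,\Sigma s)$ for $u \in \langle \cS' \rangle$; third, the minimality of the cover, needed to upgrade certain surjectivities to isomorphisms; and fourth, the $(-w)$-CY duality $\cC(X,\Sigma^\ell Y) \cong D\cC(Y,\Sigma^{-w-\ell}X)$, used to swap positive- and negative-degree Homs in the critical range. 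For condition~(iii)' I take any $c \in \cC$, use the filtration from~(iii)' for $\cS$, split each piece $\Sigma^{-i}\langle \cS \rangle$ via the torsion pair into a $\Sigma^{-i}\cT$-part and a $\Sigma^{-i}\cF$-part, rewrite $\Sigma^{-i}\cT = \Sigma^{-i+1}\langle L_{\cS'}(\cS') \rangle \subseteq \Sigma^{-i+1}\langle L_{\cS'}(\cS) \rangle$ (via $s = \Sigma L_{\cS'}(s)$ for $s \in \cS'$), observe that $\Sigma^{-i}\cF \subseteq \Sigma^{-i}\langle L_{\cS'}(\cS) \rangle$ (via the triangle defining $L_{\cS'}(s)$ for $s \in \cS \setminus \cS'$), and collapse consecutive copies of $\langle L_{\cS'}(\cS) \rangle$ using that $\cX \mathbin{*} \cX = \cX$ for an extension-closed $\cX$.

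Once (i) is established, Theorem~\ref{thm:A} applies to $L_{\cS'}(\cS)$, making $\langle L_{\cS'}(\cS) \rangle$ a finite-length proper abelian subcategory whose simples are $L_{\cS'}(\cS) = \Sigma^{-1}\cS' \sqcup L_{\cS'}(\cS \setminus \cS')$. The Serre subcategory generated by $\Sigma^{-1}\cS'$ is $\Sigma^{-1}\cT$, and the one generated by $L_{\cS'}(\cS \setminus \cS')$ is a candidate torsion-free class $\cF'$; the Serre-subcategory argument of step (ii), applied now inside $\langle L_{\cS'}(\cS) \rangle$, supplies the torsion pair $(\cF', \Sigma^{-1}\cT)$. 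The final task is to identify $\cF'$ with $\cF$: the inclusion $L_{\cS'}(s) \in \cF$ for $s \in \cS \setminus \cS'$ reduces, via the defining triangle, to $\Hom_{\langle \cS \rangle}(s', L_{\cS'}(s)) = 0$ for every $s' \in \cS'$, which uses minimality of the cover once more; the reverse inclusion expresses any $M \in \cF$ as an iterated extension of the simples $L_{\cS'}(\cS \setminus \cS')$ inside $\langle L_{\cS'}(\cS) \rangle$, by induction on length. I expect the hardest part to be the $w$-orthogonality check of step (i) together with the matching $\cF' = \cF$ in step (iii): both depend delicately on the cover being right minimal, and both require combining this minimality with the $(-w)$-CY duality to pass between positive- and negative-degree Homs.
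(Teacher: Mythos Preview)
Your plan for $w$-orthogonality of $L_{\cS'}(\cS)$ and for part (ii) is essentially the paper's. The real gap is your argument for condition (iii)' of Definition~\ref{def:SMS}. Splitting each $\Sigma^{-i}\langle\cS\rangle$ as $\Sigma^{-i}\cT * \Sigma^{-i}\cF$ and using your two inclusions yields
\[
\cC \;\subseteq\; \Sigma\langle L_{\cS'}(\cS)\rangle * \langle L_{\cS'}(\cS)\rangle * \Sigma^{-1}\langle L_{\cS'}(\cS)\rangle * \cdots * \Sigma^{-w+1}\langle L_{\cS'}(\cS)\rangle,
\]
which has $w+1$ factors, not $w$: the leading $\cT \subseteq \Sigma\langle L_{\cS'}(\cS)\rangle$ sits one level too high and no amount of collapsing removes it. The paper handles this by a second, essential use of the Calabi--Yau property that your outline does not anticipate. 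Writing $\cC = \cT * \cY$ with $\cY = \cF * \Sigma^{-1}\cT * \cdots * \Sigma^{-w+1}\cF$, for each $c$ one takes a $\Sigma^{-w}\cT$-envelope $c \to \Sigma^{-w}t$ with cocone $c'$; the triangulated Wakamatsu lemma gives $\cC(c',\Sigma^{-w}\cT)=0$, and $(-w)$-Calabi--Yau duality then gives $\cC(\cT,c') \cong D\,\cC(c',\Sigma^{-w}\cT) = 0$. Hence the $\cT$-component of $c'$ in $\cT * \cY$ vanishes, so $c' \in \cY$ (using that $\cY$ is closed under summands) and $c \in \cY * \Sigma^{-w}\cT$, which regroups into exactly $w$ copies of $\langle L_{\cS'}(\cS)\rangle$. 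Thus the Calabi--Yau hypothesis is needed not only in the orthogonality check but, crucially, to trade the unwanted leading $\cT$ for a trailing $\Sigma^{-w}\cT$.

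Your route to the torsion pair $(\cF,\Sigma^{-1}\cT)$ in $\langle L_{\cS'}(\cS)\rangle$ also has a gap. A Serre subcategory of a length category is a torsion class, but its torsion-free perpendicular consists of objects whose \emph{socle} avoids it, not objects all of whose composition factors do; so there is no a priori reason for $(\cF')^{\perp}$ to equal $\Sigma^{-1}\cT$ (already for $kA_2$ the two complementary Serre subcategories generated by the simples do not form a torsion pair). The paper avoids this by showing directly that $\cF * \Sigma^{-1}\cT$ is extension-closed in $\cC$ (Lemma~\ref{lem:24_and_25_small_version}, via $(\Sigma^{-1}\cT)*\cF \subseteq \cF*(\Sigma^{-1}\cT)$); since $L_{\cS'}(\cS) \subseteq \cF \cup \Sigma^{-1}\cT$, this gives $\langle L_{\cS'}(\cS)\rangle \subseteq \cF * \Sigma^{-1}\cT$, and together with the inclusions $\cF,\Sigma^{-1}\cT \subseteq \langle L_{\cS'}(\cS)\rangle$ (the first by induction on length, as you suggest) and $\cC(\cF,\Sigma^{-1}\cT)=0$, Dickson's axioms follow immediately.
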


\smallskip
\subsection{Structure of the paper}~\\
\label{subsec:layout}
\vspace{-2.4ex}

The body of the paper proves Theorems \ref{thm:16c}, \ref{thm:20}, \ref{thm:37}, and \ref{thm:37_prime}.  It is explained in Remarks \ref{rmk:AB} and \ref{rmk:C} how they imply Theorems \ref{thm:A} through \ref{thm:C}.  The theorems in the body of the paper are not restricted to $w$-simple minded systems but apply to $w$-orthogonal collections under certain assumptions.

Section \ref{sec:background} collects a number of definitions and a lemma.  Section \ref{sec:exact} concerns exact subcategories of triangulated categories.  Section \ref{sec:abelian} proves Theorems \ref{thm:16c} and \ref{thm:20}, which imply Theorems \ref{thm:A} and \ref{thm:B}.  Section \ref{sec:torsion} shows several lemmas on torsion pairs in the abelian categories $\langle \cS \rangle$.  Sections \ref{sec:left_tilting} and \ref{sec:right_tilting} prove Theorems \ref{thm:37} and \ref{thm:37_prime}, which imply Theorem \ref{thm:C}.  Section \ref{sec:Type_A} applies Theorems \ref{thm:A}, \ref{thm:B}, and \ref{thm:C} to negative cluster categories of Dynkin type $A$.

\section{Background}
\label{sec:background}

This section collects a number of definitions and a lemma.  Recall that throughout the paper, $k$ is a field, $\cC$ an essentially small $k$-linear $\Hom$-finite triangulated category with split idempotents.  The suspension functor is denoted by $\Sigma$.

\begin{Definition}
[Additive subcategories]
\label{def:additive_subcategory}
If $\cX$ is an additive category, then an {\em additive subcategory $\cA \subseteq \cX$} is a full subcategory closed under isomorphisms, direct sums, and direct summands in $\cX$. 
\end{Definition}

\begin{Definition}
[Short triangles and proper abelian subcategories]
\label{def:short_triangles_and_good_abelian_subcategories}
$\;$\vspace{-1ex}
\begin{enumerate}
\setlength\itemsep{4pt}

  \item  A diagram $c' \xrightarrow{ \gamma' } c \xrightarrow{ \gamma } c''$ in $\cC$ is a {\em short triangle} if there exists a triangle $c' \xrightarrow{ \gamma' } c \xrightarrow{ \gamma } c'' \xrightarrow{ \gamma'' } \Sigma c$. 

  \item  An additive subcategory $\cA \subseteq \cC$ is a {\em proper abelian subcategory} if $\cA$ is an abelian category whose short exact sequences are precisely the short triangles in $\cC$ with terms in $\cA$.

  \item  If $\cA \subseteq \cC$ is an additive subcategory, then we set
\[
  \cE_{ \cA } =
  \{\,
    \mbox{short triangles $a' \xrightarrow{} a \xrightarrow{} a''$ with $a',a,a'' \in \cA$}
  \,\}.
\]

\end{enumerate}
\end{Definition}

\begin{Remark}
{\em Admissible abelian subcategories} are a classic notion introduced in \cite[def.\ 1.2.5]{BBD}.  They are stronger than proper abelian subcategories, requiring in addition that all their negative self extensions vanish.

{\em Distinguished abelian subcategories} were recently introduced in \cite[def.\ 1.1]{L}.  They are weaker than proper abelian subcategories, requiring only that each short exact sequence is a short triangle.
\end{Remark}

\begin{Definition}
[The star product of subcategories]
\label{def:star}
$\;$\vspace{-1ex}
\begin{enumerate}
\setlength\itemsep{4pt}

  \item  If $\cA, \cB \subseteq \cC$ are full subcategories, then
\[
  \cA * \cB = 
  \{\,
    e \in \cC \mid \mbox{there is a short triangle $a \xrightarrow{} e \xrightarrow{} b$ with $a \in \cA$, $b \in \cB$}
  \,\}
\]
is a full subcategory closed under isomorphisms.  If $\cA$ and $\cB$ are closed under direct sums then so is $\cA * \cB$.

  \item  If $\cA, \cB \subseteq \cC$ are additive subcategories with $\cC( \cA,\cB ) = 0$, then $\cA * \cB \subseteq \cC$ is an additive subcategory 
by \cite[prop.\ 2.1(1)]{IY}.

\end{enumerate}
\end{Definition}

\begin{Definition}
[Subcategories closed under extensions]
\label{def:closed_under_extensions}
An additive subcategory $\cA \subseteq \cC$ is {\em closed under extensions} if $\cA * \cA \subseteq \cA$.
\end{Definition}

\begin{Definition}
[Extension closure, see {\cite[sec.\ 2]{D}}]
\label{def:extension_closure}
Let $\cS \subseteq \cC$ be a full subcategory, $n$ a non-negative integer.  Define recursively
\[
  ( \cS )_n =
  \left\{
    \begin{array}{cl}
      \{ 0 \} & \mbox{if $n=0$, } \\[1.5mm]
      ( \cS )_{ n-1 } * ( \cS \cup \{ 0 \} ) & \mbox{if $n \geqslant 1$. } \\
    \end{array}
  \right.
\]
Then
\[
  \langle \cS \rangle = \bigcup_{ n \geqslant 0 } ( \cS )_n
\]
is the smallest full subcategory of $\cC$ which is closed under isomorphisms, direct sums, and extensions and contains $\cS$; see \cite[lem.\ 2.3]{D}.

If $\cS$ is a $w$-orthogonal collection for an integer $w \geqslant 1$, then $\langle \cS \rangle$ is the smallest additive subcategory of $\cC$ which is closed under extensions and contains $\cS$; see \cite[lem.\ 2.7]{D}.  
\end{Definition}

\begin{Definition}
[{Homotopy cartesian squares, see \cite[def.\ 1.4.1]{N}}]
\label{def:homotopy_cartesian_squares}
A {\em homotopy cartesian square in $\cC$} is a 
commutative square in $\cC$,
\[
\vcenter{
  \xymatrix @+0.5pc {
    c \ar^{\gamma}[r] \ar_{\varphi}[d] & d \ar^{\varphi'}[d] \\
    c' \ar_{\gamma'}[r] & d'\lefteqn{,} \\
                    }
        }
\]
such that $c \xrightarrow{ \begin{psmallmatrix} \varphi \\ \gamma \end{psmallmatrix} } c' \oplus d \xrightarrow{ ( -\gamma',\varphi' ) } d'$ is a short triangle. 
\end{Definition}

\begin{Definition}
[Inflations, deflations, and conflations]
\label{def:inflations_deflations_conflations}
$\;$\vspace{-2.0ex}
\begin{enumerate}
\setlength\itemsep{4pt}

  \item  A {\em kernel-co\-ker\-nel pair} in an additive category $\cX$ is a diagram $x' \xrightarrow{ \xi' } x \xrightarrow{ \xi } x''$ such that $\xi'$ is a kernel of $\xi$ and $\xi$ is a cokernel of $\xi'$. 

  \item  If $\cE$ is a fixed class of kernel-cokernel pairs in $\cX$, then {\em conflations}, {\em inflations}, and {\em deflations in $\cX$} are defined as follows:  The conflations are the diagrams in $\cE$.  The inflations, respectively deflations, are the first, respectively second, morphisms in conflations.

\end{enumerate}
\end{Definition}

\begin{Definition}
[Exact categories, see {\cite[def.\ 2.1]{B}}, {\cite[\Paragraph 2]{Q}}]
\label{def:exact_categories}
$\;$\vspace{-1ex}
\begin{enumerate}
\setlength\itemsep{4pt}

  \item  An {\em exact category} is a pair $( \cX,\cE )$ where $\cX$ is an additive category, $\cE$ a fixed set of kernel-cokernel pairs in $\cX$ satisfying the axioms [E0], [E0${}^{\opp}$], [E1], [E1${}^{\opp}$], [E2], [E2${}^{\opp}$] of \cite[def.\ 2.1]{B}.  We use this version of the definition for practical reasons; it is equivalent to the original definition given in \cite[\Paragraph 2]{Q}, see \cite[sec.\ 2]{B}.

  \item  An exact category $( \cX,\cE )$ has a fixed class $\cE$ of kernel-cokernel pairs, so the terms conflation, inflation, and deflation in $\cX$ make sense.  

\end{enumerate}
\end{Definition}

\begin{Definition}
[Projective objects]
\label{def:enough_projective_objects}
Let $( \cX,\cE )$ be an exact category.\vspace{-1ex}
\begin{enumerate}
\setlength\itemsep{4pt}

  \item  A {\em projective object of $\cX$} is an object $p \in \cX$ such that each conflation $x' \xrightarrow{} x \xrightarrow{} x''$  in $\cX$ induces a short exact sequence $0 \xrightarrow{} \cX( p,x' ) \xrightarrow{} \cX( p,x ) \xrightarrow{} \cX( p,x'' ) \xrightarrow{} 0$.

  \item  We say that $\cX$ {\em has enough projective objects} if each object $x$ permits a conflation $k \xrightarrow{} p \xrightarrow{} x$ where $p$ is projective.

\end{enumerate}
\end{Definition}

We end the background section with a convenient lemma due to \cite{CSP1} and \cite{D}.

\begin{Lemma}
\label{lem:functorial_finiteness}
If $w \geqslant 1$ is an integer, $\cS$ a $w$-simple minded system in $\cC$, and $\cS' \subseteq \cS$ is a subset, then $\Sigma^i\langle \cS' \rangle$ is functorially finite in $\cC$ for each $i$. 
\end{Lemma}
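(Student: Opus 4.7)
The plan is to reduce to the case $i = 0$ using that $\Sigma$ is an auto-equivalence of $\cC$ and hence preserves functorial finiteness. It then suffices to show $\langle \cS' \rangle$ is both covariantly and contravariantly finite in $\cC$; the two are symmetric, so I would focus on contravariant finiteness (existence of right approximations).

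I would first establish that $\langle \cS \rangle$ itself is contravariantly finite in $\cC$. Given $c \in \cC$, the $w$-simple minded system axiom supplies a triangle
\[
  e \longrightarrow c \longrightarrow r \longrightarrow \Sigma e
\]
with $e \in \langle \cS \rangle$ and $r \in \Sigma^{-1}\langle \cS \rangle * \cdots * \Sigma^{-w+1}\langle \cS \rangle$. The orthogonality condition $\cC(s_i, \Sigma^{\ell} s_j) = 0$ for $\ell \in \{-w+1, \ldots, -1\}$ propagates through extensions via the long exact sequence in $\cC(-,-)$, yielding $\cC(\langle \cS \rangle, r) = 0$. Applying $\cC(x, -)$ for $x \in \langle \cS \rangle$ to the triangle then shows that every morphism $x \to c$ factors uniquely through $e$, so $e \to c$ is a right $\langle \cS \rangle$-approximation.

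Next, I would refine this to $\langle \cS' \rangle$. For $e \in \langle \cS \rangle$ I would form the evaluation map
\[
  x_0 \;:=\; \bigoplus_{s \in \cS'} \cC(s,e) \otimes_k s \;\longrightarrow\; e,
\]
which is well-defined by Hom-finiteness, take its cone, and iterate. Once the iteration terminates it produces a right $\langle \cS' \rangle$-approximation of $e$; composition with the approximation from the previous paragraph then gives a right $\langle \cS' \rangle$-approximation of an arbitrary $c \in \cC$.

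The main obstacle is proving that this iterative construction terminates in finitely many steps. This requires a careful dimension-counting argument combining Hom-finiteness with the $w$-orthogonality of $\cS$ to control how $\cC(\cS',-)$ behaves on successive cones; this is exactly the technical content of \cite{CSP1} and \cite{D} cited in the statement. Covariant finiteness of $\langle \cS' \rangle$ is obtained by the dual argument, completing the proof of functorial finiteness.
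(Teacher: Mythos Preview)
Your proposal follows essentially the same two-step strategy as the paper: reduce to $i=0$ via $\Sigma$, show $\langle \cS \rangle$ is functorially finite in $\cC$, then show $\langle \cS' \rangle$ is functorially finite in $\langle \cS \rangle$, and compose. Your explicit argument for the first step (using the decomposition $\cC = \langle \cS \rangle * \Sigma^{-1}\langle \cS \rangle * \cdots * \Sigma^{-w+1}\langle \cS \rangle$ together with the orthogonality to exhibit the approximation) is correct and is precisely the content of \cite[cor.\ 2.9]{CSP1}, which the paper simply cites.

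For the second step the paper again just cites the literature (\cite[thm.\ 2.11]{CSP1} combined with the proof of \cite[thm.\ 3.3]{D}), and you likewise defer to these references for the hard part. Two small comments on your sketch: the evaluation map should be formed using $\otimes_{\End(s)}$ rather than $\otimes_k$, since $\cC(s,s)$ is only assumed to be a skew field; more substantively, the actual argument in \cite{D} is not an iterative cone construction but an induction on the filtration length of $e \in \langle \cS \rangle$ (every object of $\langle \cS \rangle$ is a finite iterated extension of objects of $\cS$), which is what makes termination transparent. Your ``take cone and iterate'' description does not obviously converge to an approximation of $e$, so if you were to flesh this out you would want to switch to the filtration-length induction.
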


\begin{proof}
By \cite[thm.\ 2.11]{CSP1} it follows from the proof of \cite[thm.\ 3.3]{D} that $\langle \cS' \rangle$ is functorially finite in $\langle \cS \rangle$, and $\langle \cS \rangle$ is functorially finite in $\cC$ by \cite[cor.\ 2.9]{CSP1}.  Hence $\langle \cS' \rangle$ is functorially finite in $\cC$ and since $\Sigma$ is an automorphism, so is $\Sigma^i\langle \cS' \rangle$.  Note that the methods of \cite{CSP1} apply despite our definition of $w$-simple minded system being slightly more general than theirs.
\end{proof}

\section{A theorem by Dyer on exact subcategories of triangulated categories}
\label{sec:exact}

The main item of this section is a theorem by Dyer on exact subcategories of triangulated categories, see \cite[p.\ 1]{Dyer} and Theorem \ref{thm:Dyer}.  We are grateful to Professor Dyer for permitting the proof of Theorem \ref{thm:Dyer} to appear in this paper.  This section also proves Proposition \ref{pro:9} and Lemma \ref{lem:5b}, which appear to be new.

\begin{Setup}
\label{set:exact}
In this section $\cA \subseteq \cC$ is a fixed additive subcategory which is closed under extensions and satisfies \ConditionEShort{-1}{\cC}{\cA}.  
\end{Setup}

\begin{Lemma}
\label{lem:3}
Each diagram $a' \xrightarrow{ \alpha' } a \xrightarrow{ \alpha } a''$ in $\cE_{ \cA }$ is a kernel-cokernel pair in $\cA$.
\end{Lemma}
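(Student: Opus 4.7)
The diagram comes from a triangle $a' \xrightarrow{\alpha'} a \xrightarrow{\alpha} a'' \xrightarrow{\alpha''} \Sigma a'$ in $\cC$ with all three terms in $\cA$, and $\cA \subseteq \cC$ is full. So in order to verify that $\alpha'$ is a kernel of $\alpha$ and $\alpha$ a cokernel of $\alpha'$ in $\cA$, it suffices to check the corresponding universal properties with test objects drawn from $\cA$. The plan is to apply the homological functors $\cC(b,-)$ and $\cC(-,b)$ (for $b \in \cA$) to the triangle, get the standard long exact sequences, use the ``weak (co)kernel'' portion for existence, and invoke the hypothesis \ConditionEShort{-1}{\cC}{\cA} for uniqueness.

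\textbf{Kernel.} Fix $b \in \cA$ and suppose $\beta : b \to a$ satisfies $\alpha\beta = 0$. The long exact sequence
\[
  \cC(b,\Sigma^{-1}a'') \longrightarrow \cC(b,a') \xrightarrow{\alpha'_{*}} \cC(b,a) \xrightarrow{\alpha_{*}} \cC(b,a'')
\]
is exact in the middle, so some $\gamma : b \to a'$ satisfies $\alpha'\gamma = \beta$; this $\gamma$ is automatically a morphism in $\cA$ by fullness, giving existence. For uniqueness, any two lifts differ by an element of $\ker \alpha'_{*}$, hence by the image of $\cC(b,\Sigma^{-1}a'') \to \cC(b,a')$. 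But $\cC(b,\Sigma^{-1}a'') = 0$ by Setup \ref{set:exact} since $b, a'' \in \cA$, so the lift is unique.

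\textbf{Cokernel.} Dually, fix $b \in \cA$ and suppose $\beta : a \to b$ satisfies $\beta\alpha' = 0$. The long exact sequence
\[
  \cC(\Sigma a',b) \longrightarrow \cC(a'',b) \xrightarrow{\alpha^{*}} \cC(a,b) \xrightarrow{(\alpha')^{*}} \cC(a',b)
\]
produces some $\gamma : a'' \to b$ with $\gamma\alpha = \beta$, again lying in $\cA$ by fullness. Two such factorisations differ by an element coming from $\cC(\Sigma a', b) \cong \cC(a', \Sigma^{-1}b)$, which vanishes by \ConditionEShort{-1}{\cC}{\cA}. This yields uniqueness.

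\textbf{Obstacle.} There is essentially no obstacle: once the triangulated long exact sequences are written down, the existence halves are automatic and the uniqueness halves use exactly the single hypothesis $\cC(\cA,\Sigma^{-1}\cA) = 0$ imposed in Setup \ref{set:exact}. The only point worth flagging is that all references to ``morphism in $\cA$'' are legitimate because $\cA$ is a full subcategory, so the lifts produced in $\cC$ automatically live in $\cA$.
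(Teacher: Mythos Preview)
Your proof is correct and is essentially identical to the paper's: both apply $\cC(b,-)$ and $\cC(-,b)$ to the triangle, write down the resulting four-term exact sequences, and observe that the leftmost term vanishes by the hypothesis \ConditionEShort{-1}{\cC}{\cA}, which simultaneously yields existence (exactness in the middle) and uniqueness (injectivity on the left). The paper presents this more tersely, but the content is the same.
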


\begin{proof}
The diagram in the lemma is a short triangle, so there is a triangle $a' \xrightarrow{ \alpha' } a \xrightarrow{ \alpha } a'' \xrightarrow{} \Sigma a'$.  For $x \in \cA$ it induces the exact sequences
\begin{align*}
    \cC( x,\Sigma^{ -1 }a'' )
  & \xrightarrow{}
    \cC( x,a' )
    \xrightarrow{}
    \cC( x,a )
    \xrightarrow{}
    \cC( x,a'' ), \\
    \cC( \Sigma a',x )
  & \xrightarrow{}
    \cC( a'',x )
    \xrightarrow{}
    \cC( a,x )
    \xrightarrow{}
    \cC( a',x ).
\end{align*}
The first term of each sequence is zero because \ConditionEShort{-1}{\cC}{\cA} so the lemma follows.
\end{proof}

\begin{Lemma}
\label{lem:2}
Let the following be a homotopy cartesian square in $\cC$ with $a,x,a',x' \in \cA$.
\begin{equation}
\label{equ:homotopy_cartesian_square}
\vcenter{
  \xymatrix @+0.5pc {
    a \ar^{\alpha}[r] \ar_{\varphi}[d] & x \ar^{\varphi'}[d] \\
    a' \ar_{\alpha'}[r] & x' \\
                    }
        }
\end{equation}
Then \eqref{equ:homotopy_cartesian_square} is a pullback square and a pushout square in $\cA$.   
\end{Lemma}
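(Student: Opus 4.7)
The plan is to extract from Definition \ref{def:homotopy_cartesian_squares} a short triangle in $\cC$ with all terms in $\cA$, apply Lemma \ref{lem:3} to upgrade it to a kernel--cokernel pair in $\cA$, and then invoke the standard translation from additive category theory that identifies kernel--cokernel pairs of this shape with pullback and pushout squares.

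Concretely, the first step is to write down the defining short triangle
\[
  a \xrightarrow{ \begin{psmallmatrix} \varphi \\ \alpha \end{psmallmatrix} } a' \oplus x \xrightarrow{ (-\alpha',\varphi') } x'
\]
provided by Definition \ref{def:homotopy_cartesian_squares}. Since $\cA$ is an additive subcategory it contains $a' \oplus x$, so every vertex lies in $\cA$ and this short triangle belongs to $\cE_\cA$. Lemma \ref{lem:3} then immediately promotes it to a kernel--cokernel pair in $\cA$: the morphism $\bigl(\begin{smallmatrix}\varphi\\\alpha\end{smallmatrix}\bigr)$ is a kernel of $(-\alpha',\varphi')$ in $\cA$, and dually $(-\alpha',\varphi')$ is a cokernel of $\bigl(\begin{smallmatrix}\varphi\\\alpha\end{smallmatrix}\bigr)$.

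The remaining step is the routine conversion of this statement into the universal properties of a pullback and a pushout, which I would do by hand for clarity. Given a test object $y \in \cA$ with morphisms $\beta \colon y \to a'$ and $\xi \colon y \to x$ satisfying $\alpha'\beta = \varphi'\xi$, the morphism $\bigl(\begin{smallmatrix}\beta\\\xi\end{smallmatrix}\bigr) \colon y \to a' \oplus x$ is annihilated by $(-\alpha',\varphi')$, so the kernel property furnishes a unique $y \to a$ factoring it, which is precisely the pullback universal property for the square \eqref{equ:homotopy_cartesian_square}. The pushout universal property is dual, using the cokernel property of $(-\alpha',\varphi')$ against pairs of morphisms out of $a' \oplus x$ that agree after precomposition with $\bigl(\begin{smallmatrix}\varphi\\\alpha\end{smallmatrix}\bigr)$.

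There is essentially no obstacle to overcome: the nontrivial content---promoting a short triangle in $\cE_\cA$ to a kernel--cokernel pair in the (a priori non-abelian) additive category $\cA$---has already been carried out in Lemma \ref{lem:3} using the hypothesis \ConditionEShort{-1}{\cC}{\cA}, and what remains is a purely formal diagram chase in additive categories.
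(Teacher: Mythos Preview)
Your proposal is correct and follows essentially the same route as the paper: extract the defining short triangle from Definition~\ref{def:homotopy_cartesian_squares}, observe all terms lie in $\cA$, apply Lemma~\ref{lem:3} to get a kernel--cokernel pair, and conclude pullback/pushout. The only difference is that the paper leaves the final conversion from kernel--cokernel pair to pullback/pushout as a one-line ``it follows that,'' whereas you spell out the diagram chase explicitly.
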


\begin{proof}
Since \eqref{equ:homotopy_cartesian_square} is a homotopy cartesian square, there is a triangle $a \xrightarrow{ \begin{psmallmatrix} \varphi \\ \alpha \end{psmallmatrix} } a' \oplus x \xrightarrow{ ( -\alpha',\varphi' ) } x' \xrightarrow{} \Sigma a$ by Definition \ref{def:homotopy_cartesian_squares} whence the short triangle $a \xrightarrow{ \begin{psmallmatrix} \varphi \\ \alpha \end{psmallmatrix} } a' \oplus x \xrightarrow{ ( -\alpha',\varphi' ) } x'$ is a kernel-cokernel pair in $\cA$ by Definition \ref{def:short_triangles_and_good_abelian_subcategories}(iii) and Lemma \ref{lem:3}.  It follows that \eqref{equ:homotopy_cartesian_square} is a pullback square and a pushout square in $\cA$.
\end{proof}

\begin{Remark}
\label{rmk:NPexa2.13}
If $( \cX,\cE )$ is an essentially small $k$-linear exact category, then there is a bifunctor
\[
  \Ext_{ ( \cX,\cE ) }( -,- ) :
  \cX^{ \opp } \times \cX \xrightarrow{}
  \mod\,k
\]
defined by letting $\Ext_{ ( \cX,\cE ) }( x'',x' )$ be the set of equivalence classes in the usual sense of conflations $x' \xrightarrow{} x \xrightarrow{} x''$.  Addition of equivalence classes is defined by Baer sum.  See \cite[exa.\ 2.13]{NP} for details.
\end{Remark}

\begin{Theorem}
[Dyer]
\label{thm:Dyer}
The category $\cA$ from Setup \ref{set:exact} has the following properties.
\begin{enumerate}
\setlength\itemsep{4pt}

  \item  $( \cA,\cE_{ \cA } )$ is an exact category.  
  
  \item  For $a',a'' \in \cA$ there is a natural isomorphism 
\[
  \Ext_{ ( \cA,\cE_{ \cA } ) }( a'',a' )
  \xrightarrow{ \Phi }
  \cC( a'',\Sigma a' ).
\]

  \item  Let $p \in \cA$ be given.  Then $p$ is a projective object of $\cA$ $\Leftrightarrow$ $\cC( p,\Sigma \cA ) = 0$.

\end{enumerate}
\end{Theorem}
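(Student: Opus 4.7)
The plan is to treat the three parts in order, using Lemmas \ref{lem:3} and \ref{lem:2} together with the octahedral and rotation axioms as the main tools; the orthogonality hypothesis $\cC(\cA, \Sigma^{-1}\cA) = 0$ will be invoked decisively in part (ii).

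For part (i), I verify the axioms [E0], [E0$^{\opp}$], [E1], [E1$^{\opp}$], [E2], [E2$^{\opp}$] of Definition \ref{def:exact_categories}. Every element of $\cE_{\cA}$ is a kernel-cokernel pair in $\cA$ by Lemma \ref{lem:3}, and the split triangles on identity morphisms handle [E0] and [E0$^{\opp}$]. For [E1] and [E1$^{\opp}$] (composability of inflations and deflations), I apply the octahedral axiom to two composable inflations; extension closure of $\cA$ places the resulting cone in $\cA$. For [E2] and [E2$^{\opp}$], given an inflation $\alpha' \colon a' \to a$ sitting in a triangle $a' \xrightarrow{\alpha'} a \to a'' \xrightarrow{\partial} \Sigma a'$ and an arbitrary $\varphi \colon a' \to b'$, I form the triangle $b' \to b \to a'' \xrightarrow{(\Sigma \varphi) \circ \partial} \Sigma b'$; extension closure gives $b \in \cA$, and TR3 applied to the commutative square from $\partial$ to $(\Sigma \varphi) \circ \partial$ produces a morphism of triangles whose third column is the identity on $a''$. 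It is then standard that such a morphism yields a homotopy cartesian square, and Lemma \ref{lem:2} upgrades it to an honest pushout in $\cA$ with $b' \to b$ an inflation with cokernel $a''$.

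For part (ii), I define $\Phi$ by sending the class of a conflation $a' \xrightarrow{\alpha'} a \to a''$ to the connecting morphism $\partial \colon a'' \to \Sigma a'$ of any completing triangle. The pivotal step is to show $\partial$ is uniquely determined by the first two morphisms: if $\partial, \partial'$ are two completions, TR3 supplies $\gamma \in \cC(\Sigma a', \Sigma a')$ with $\gamma \partial = \partial'$ and $\Sigma \alpha' \circ (\id - \gamma) = 0$. Applying $\cC(\Sigma a', -)$ to the rotated triangle $a'' \xrightarrow{\partial} \Sigma a' \xrightarrow{\Sigma \alpha'} \Sigma a \to \Sigma a''$, exactness places $\id - \gamma$ in the image of $\partial_{\ast} \colon \cC(\Sigma a', a'') \to \cC(\Sigma a', \Sigma a')$. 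But $\cC(\Sigma a', a'') \cong \cC(a', \Sigma^{-1} a'') = 0$ by the standing hypothesis, so $\gamma = \id$ and $\partial = \partial'$. With this uniqueness established, well-definedness on equivalence classes, injectivity, compatibility with Baer sum, and bifunctorial naturality follow by routine applications of TR3 and the short five lemma. Surjectivity is immediate upon completing any $\partial \in \cC(a'', \Sigma a')$ to a triangle and using extension closure to place the middle term in $\cA$.

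For part (iii), the direction $\cC(p, \Sigma \cA) = 0 \Rightarrow p$ projective is immediate from the long exact sequence of $\cC(p, -)$ applied to any triangle extending a conflation in $\cA$. Conversely, if $p$ is projective and $g \in \cC(p, \Sigma a)$ for some $a \in \cA$, I complete $g$ to a triangle $a \to x \to p \xrightarrow{g} \Sigma a$; extension closure gives $x \in \cA$, so $a \to x \to p$ is a conflation, which splits by projectivity of $p$, and part (ii) then forces $g = 0$. The principal obstacle will be the [E2] verification in part (i): one must carefully assemble the candidate pushout and identify the resulting comparison square as homotopy cartesian before Lemma \ref{lem:2} can be brought to bear. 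The uniqueness of $\partial$ in part (ii), although cleaner once set up, is the next most delicate point, since it is the decisive use of the orthogonality hypothesis.
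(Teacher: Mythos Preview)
Your proposal is correct and follows essentially the same route as the paper: octahedral axiom for [E1], a homotopy cartesian square plus Lemma~\ref{lem:2} for [E2], uniqueness of the connecting morphism via $\cC(\Sigma a',a'')=0$ for (ii), and the Ext characterisation for (iii). One caution on [E2]: TR3 by itself does not guarantee that an \emph{arbitrary} fill-in $\psi\colon a\to b$ makes the square homotopy cartesian; you need the stronger statement (the paper invokes \cite[lem.~1.4.3]{N}) that $\psi$ can be \emph{chosen} so that the square is homotopy cartesian, so tighten ``it is then standard'' to an explicit citation or a short argument.
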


\begin{proof}
(i):  We will verify that $( \cA,\cE_{ \cA } )$ satisfies \cite[def.\ 2.1]{B}.  Since $\cA \subseteq \cC$ is an additive subcategory, $\cA$ is itself an additive category.  The diagrams in $\cE_{ \cA }$ are kernel-cokernel pairs in $\cA$ by Lemma \ref{lem:3}; in particular, we can talk about conflations, inflations, and deflations in $\cA$.  

We now check the labelled axioms from \cite[def.\ 2.1]{B}.

[E0] and [E0${}^{\opp}$]: If $a \in \cA$ then there are short triangles $a \xrightarrow{ \id_a } a \xrightarrow{} 0$ and $0 \xrightarrow{} a \xrightarrow{ \id_a } a$.  They are conflations in $\cA$ so $\id_a$ is an inflation and a deflation.  

[E1]: Let $a^0 \xrightarrow{ \alpha^0 } a^1$ and $a^1 \xrightarrow{ \alpha^1 } a^2$ be inflations.  Then there are conflations in $\cA$, that is, short triangles $a^0 \xrightarrow{ \alpha^0 } a^1 \xrightarrow{} x^2$ and $a^1 \xrightarrow{ \alpha^1 } a^2 \xrightarrow{} x^3$ with $x^2, x^3 \in \cA$.  By the octahedral axiom they can be combined to the following commutative diagram, where each row and column is a short triangle.
\[
  \xymatrix @+0.5pc {
    a^0 \ar^{\alpha^0}[r] \ar@{=}[d] & a^1 \ar[r] \ar^{ \alpha^1 }[d] & x^2 \ar[d] \\
    a^0 \ar_{\alpha^1\alpha^0}[r] \ar[d] & a^2 \ar[r] \ar[d] & x \ar[d] \\
    0 \ar[r] & x^3 \ar@{=}[r] & x^3 \\
                    }
\]
Since $\cA$ is closed under extensions the third column shows $x \in \cA$, whence the second row is a conflation in $\cA$ showing that $\alpha^1\alpha^0$ is an inflation.  

[E1${}^{\opp}$]: This axiom is superfluous by \cite[app.\ A]{K}, see \cite[rmk.\ 2.4]{B}.  

[E2]: Let the diagram
\begin{equation}
\label{equ:partial_square}
\vcenter{
  \xymatrix @+0.5pc {
    a \ar^{\alpha}[r] \ar_{\varphi}[d] & x \\
    a' \\
                    }
        }
\end{equation}
be given in $\cA$ where $\alpha$ is an inflation.  This means that there is a conflation in $\cA$, that is, a short triangle $a \xrightarrow{ \alpha } x \xrightarrow{} y$ with $y \in \cA$, so there is a triangle $\Sigma^{ -1 }y \xrightarrow{ \delta } a \xrightarrow{ \alpha } x \xrightarrow{} y$.  We can hence construct the solid part of the following commutative diagram with triangles as rows.
\[
\vcenter{
  \xymatrix @+0.5pc {
    \Sigma^{-1}y \ar^{\delta}[r] \ar@{=}[d] & a \ar^{\alpha}[r] \ar_{\varphi}[d] & x \ar[r] \ar@{.>}^{\varphi'}[d] & y \ar@{=}[d] \\
    \Sigma^{-1}y \ar_{\varphi\delta}[r] & a' \ar_{\alpha'}[r] & x' \ar[r] & y \\
                    }
        }
\]
By \cite[lem.\ 1.4.3]{N} the morphism $\varphi'$ can be chosen to complete the diagram such that the middle square is homotopy cartesian in $\cC$.  

Since $a',y$ are in $\cA$ which is closed under extensions, the second row shows that $x'$ is in $\cA$ and that $\alpha'$ is an inflation.  Lemma \ref{lem:2} implies that the middle square is a pushout square in $\cA$.  The square completes \eqref{equ:partial_square}, so we have established [E2].

[E2${}^{\opp}$]: Proved similarly to [E2].

(ii):  To define $\Phi$, let $\varepsilon = [ a' \xrightarrow{ \alpha' } a \xrightarrow{ \alpha } a'' ]$ in $\Ext_{ ( \cA,\cE_{ \cA } ) }( a'',a' )$ be given.  The conflation $a' \xrightarrow{ \alpha' } a \xrightarrow{ \alpha } a''$ is a short triangle, and we let $a' \xrightarrow{ \alpha' } a \xrightarrow{ \alpha } a'' \xrightarrow{ \alpha'' } \Sigma a'$ be a triangle and set $\Phi( \varepsilon ) = \alpha''$.  It is straightforward to check that this gives a well defined, natural, $k$-linear bijection.  It is useful to note that when $a' \xrightarrow{ \alpha' } a \xrightarrow{ \alpha } a''$ is given, the morphism $\alpha''$ is unique by \cite[cor.\ 1.1.10(ii)]{BBD} because $\cC( \Sigma a',a'' ) = 0$ since \ConditionEShort{-1}{\cC}{\cA}.

(iii):  Since $\Ext_{ ( \cA,\cE_{ \cA } ) }$ is defined as in Remark \ref{rmk:NPexa2.13}, it follows from \cite[prop.\ 11.3(iii)]{B} that $p$ is a projective object of $\cA$ if and only if $\Ext_{ ( \cA,\cE_{ \cA } ) }( p,\cA ) = 0$.  By  part (ii) this is equivalent to $\cC( p,\Sigma \cA ) = 0$.  
\end{proof}

The idea of the following proposition is due to \cite[thm.\ 2.1]{CSPP}.

\begin{Proposition}
\label{pro:9}
Assume that each object of $\cA$ has a $\Sigma \cA$-envelope.  Then the exact category $( \cA,\cE_{ \cA } )$ has enough projective objects.
\end{Proposition}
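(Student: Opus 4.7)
Given $a \in \cA$, the plan is to apply the hypothesis to choose a $\Sigma\cA$-envelope $\gamma : a \to \Sigma k$ (so $k \in \cA$), complete it to a distinguished triangle
\[
  k \xrightarrow{i} p \xrightarrow{\pi} a \xrightarrow{\gamma} \Sigma k,
\]
and then argue that $k \xrightarrow{i} p \xrightarrow{\pi} a$ is a conflation in $(\cA, \cE_{\cA})$ with $p$ projective, which gives enough projectives. The easier part is the conflation: since $\cA$ is closed under extensions and $k, a \in \cA$, the middle term satisfies $p \in \cA * \cA \subseteq \cA$, so the short triangle automatically lies in $\cE_{\cA}$ and $\pi$ is a deflation onto $a$.

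The substantive step is the projectivity of $p$. By Theorem \ref{thm:Dyer}(iii) this reduces to showing $\cC(p, \Sigma x) = 0$ for every $x \in \cA$. Applying $\cC(-, \Sigma x)$ to the triangle produces the long exact sequence
\[
  \cC(\Sigma k, \Sigma x) \xrightarrow{\gamma^*} \cC(a, \Sigma x) \xrightarrow{\pi^*} \cC(p, \Sigma x) \xrightarrow{i^*} \cC(k, \Sigma x) \xrightarrow{\partial} \cC(\Sigma^{-1}a, \Sigma x),
\]
and the defining property of the envelope says that every morphism $a \to \Sigma x$ factors through $\gamma$, so $\gamma^*$ is surjective. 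Exactness then forces $\pi^* = 0$ and hence $i^*$ is injective, and showing that the image of $i^*$ is zero --- equivalently, that $\partial$ is injective on that image --- will close the argument.

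The main obstacle is precisely this last step. The straightforward long exact sequence supplies only injectivity of $i^*$, not the desired vanishing; a single use of the envelope property is insufficient because $\partial$ lands in $\cC(\Sigma^{-1}a, \Sigma x) = \cC(a, \Sigma^2 x)$, whereas the envelope controls only morphisms into $\Sigma \cA$, not $\Sigma^2 \cA$. My plan is to combine the standing hypothesis $\cC(\cA, \Sigma^{-1}\cA) = 0$ from Setup \ref{set:exact} with a second, shifted application of the envelope property, now applied to the intermediate object $k$ (which itself has a $\Sigma\cA$-envelope by the blanket assumption of the proposition), so as to annihilate the obstruction in the style of \cite[thm.\ 2.1]{CSPP}. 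The reason the proposition requires envelopes to exist for \emph{every} object of $\cA$ --- and not merely for $a$ --- is exactly to make this secondary use available; once $\partial$ is shown injective on the image of $i^*$, the vanishing $\cC(p, \Sigma x) = 0$ follows and Theorem \ref{thm:Dyer}(iii) delivers projectivity of $p$, completing the verification that $(\cA, \cE_{\cA})$ has enough projectives.
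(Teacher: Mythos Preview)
Your setup matches the paper's: take a $\Sigma\cA$-envelope $\gamma:a\to\Sigma k$, complete to a triangle $k\xrightarrow{i}p\xrightarrow{\pi}a\xrightarrow{\gamma}\Sigma k$, observe $p\in\cA$ by extension closure so that $k\to p\to a$ is a conflation, and reduce projectivity of $p$ to $\cC(p,\Sigma\cA)=0$ via Theorem~\ref{thm:Dyer}(iii). The divergence is in how that vanishing is established. The paper simply invokes the triangulated Wakamatsu Lemma (dual to \cite[lem.~2.1]{JorARSub}): since $\Sigma\cA$ is closed under extensions and $\gamma$ is a $\Sigma\cA$-envelope, the fiber $p$ satisfies $\cC(p,\Sigma\cA)=0$. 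The proof of that lemma uses both extension closure of $\Sigma\cA$ and the \emph{left minimality} of $\gamma$ in an essential way (one builds an extension $e\in\Sigma\cA$ of $\Sigma k$ by $\Sigma x$, factors $\gamma$ through $e$, and then minimality forces a splitting that kills the obstruction).

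Your long exact sequence correctly yields that $i^*$ is injective, but the remaining step --- showing the image of $i^*$ is zero --- is not carried out, and your proposed ingredients do not close the gap. A second $\Sigma\cA$-envelope of $k$ only tells you that every map $k\to\Sigma x$ factors through $\delta:k\to\Sigma k'$; it says nothing about the particular maps of the form $f\circ i$, so it cannot force $f\circ i=0$. The hypothesis $\cC(\cA,\Sigma^{-1}\cA)=0$ controls morphisms into $\Sigma^{-1}\cA$, not into $\Sigma\cA$ or $\Sigma^2\cA$, so it is irrelevant here. (Your diagnosis that envelopes are required for \emph{every} object so as to permit a secondary use on $k$ is also off: the universal quantifier is there simply because ``enough projectives'' demands a projective deflation onto every object.) What is actually needed --- and what your argument never invokes --- is extension closure of $\Sigma\cA$ together with minimality of $\gamma$; these are precisely the inputs to the Wakamatsu argument, and without them the vanishing $\cC(p,\Sigma\cA)=0$ does not follow from the exact sequence alone.
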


\begin{proof}
Given $a \in \cA$, pick a $\Sigma \cA$-envelope $a \xrightarrow{} \Sigma a'$ and complete to a triangle  $a' \xrightarrow{} q \xrightarrow{} a \xrightarrow{} \Sigma a'$.  Then the short triangle $a' \xrightarrow{} q \xrightarrow{} a$ is a conflation in $\cA$ because $q \in \cA$ since $\cA$ is closed under extensions.  Moreover, the triangulated Wakamatsu Lemma, dual to \cite[lem.\ 2.1]{JorARSub}, says $\cC( q,\Sigma \cA ) = 0$, so $q$ is a projective object of $\cA$ by Theorem \ref{thm:Dyer}(iii).
\end{proof}

\begin{Lemma}
\label{lem:5b}
Assume that $\cA * ( \Sigma \cA ) \subseteq ( \Sigma \cA ) * \cA$.  Then each categorical monomorphism in $\cA$ is an inflation in $\cA$.  
\end{Lemma}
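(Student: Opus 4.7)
The plan is to complete $\mu: a' \to a$ to a triangle
\[
  a' \xrightarrow{\mu} a \xrightarrow{\alpha} c \xrightarrow{\gamma} \Sigma a'
\]
and show that $c \in \cA$.  Once this is established, $a' \xrightarrow{\mu} a \xrightarrow{\alpha} c$ is a short triangle with all terms in $\cA$, hence a conflation in $\cE_{\cA}$, so $\mu$ is an inflation by Theorem \ref{thm:Dyer}(i).

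Rotating the triangle above yields a short triangle $a \xrightarrow{\alpha} c \xrightarrow{\gamma} \Sigma a'$ which exhibits $c \in \cA * \Sigma\cA$.  The hypothesis then delivers $c \in \Sigma\cA * \cA$, so I would pick a triangle
\[
  \Sigma x \xrightarrow{\iota} c \xrightarrow{\beta} y \xrightarrow{\delta} \Sigma^{2} x
\]
with $x, y \in \cA$, and argue that $\iota = 0$ in two steps.  First, let $h = \Sigma^{-1}(\gamma\iota) : x \to a'$.  The triangle axioms give $(\Sigma\mu) \circ \gamma = 0$, whence $\mu \circ \Sigma^{-1}\gamma = 0$, so $\mu h = 0$.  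Since $\mu$ is a monomorphism in $\cA$ and $x \in \cA$, this forces $h = 0$; hence $\gamma\iota = 0$.  Second, $\gamma\iota = 0$ implies $\iota$ factors through $\alpha$ as $\iota = \alpha\tilde{\iota}$ with $\tilde{\iota} \in \cC(\Sigma x, a)$.  Since $x, a \in \cA$, the condition \ConditionEShort{-1}{\cC}{\cA} from Setup \ref{set:exact} is equivalent to $\cC(\Sigma\cA, \cA) = 0$, so $\tilde{\iota} = 0$ and consequently $\iota = 0$.

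A triangle whose first map is zero splits, so $y \cong c \oplus \Sigma^{2} x$.  Therefore $c$ is a direct summand of $y \in \cA$, and because $\cA$ is closed under direct summands, $c \in \cA$, as required.  The main obstacle is the first of the two vanishing arguments: one has to notice that the hypothesis $\cA * \Sigma\cA \subseteq \Sigma\cA * \cA$ produces from $c$ exactly the test morphism $h: x \to a'$ (via rotating the triangle of $\mu$ by $\Sigma^{-1}$) that the monomorphism property of $\mu$ is able to annihilate; the orthogonality condition of Setup \ref{set:exact} then takes care of the residual factorisation that the monomorphism hypothesis alone cannot reach.
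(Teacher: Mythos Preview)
Your proof is correct and follows the same overall strategy as the paper: complete $\mu$ to a triangle, use the hypothesis to place the cone $c$ in $(\Sigma\cA)*\cA$ via a triangle $\Sigma x \xrightarrow{\iota} c \to y$, exploit the monomorphism property to kill a composition, and conclude that $c$ is a direct summand of $y \in \cA$.

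The execution differs in one respect. The paper invokes the octahedral axiom to glue the two triangles into a $3\times 4$ diagram, reads off that the relevant composition (your $h$) vanishes, then argues that the diagram is isomorphic to a split version, from whose third column one extracts $y \cong c \oplus \Sigma^{2}x$. You bypass the octahedral axiom entirely: having shown $\gamma\iota = 0$, you factor $\iota$ through the weak kernel $\alpha$ and use $\cC(\Sigma\cA,\cA)=0$ to conclude $\iota = 0$ outright, whence the triangle $\Sigma x \to c \to y$ splits. This is a genuinely more elementary route to the same splitting; the paper's octahedral diagram is not needed once one observes that the vanishing of $\gamma\iota$ together with the orthogonality condition already forces $\iota = 0$.
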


\begin{proof}
Let $a^0 \xrightarrow{ \alpha^0 } a^1$ be a categorical monomorphism in $\cA$ and complete to a triangle $\Sigma^{ -1 }c \xrightarrow{ \delta } a^0 \xrightarrow{ \alpha^0 } a^1 \xrightarrow{} c$.  We will show $c \in \cA$ whence the triangle gives a conflation $a^0 \xrightarrow{ \alpha^0 } a^1 \xrightarrow{} c$ in $\cA$ showing that $\alpha^0$ is an inflation.  

The triangle shows $c \in \cA * ( \Sigma \cA )$.  Hence $c \in ( \Sigma \cA ) * \cA$ so there exists a triangle $\Sigma a' \xrightarrow{ \varphi } c \xrightarrow{} a'' \xrightarrow{} \Sigma^2 a'$ with $a',a'' \in \cA$.  By the octahedral axiom the two triangles can be combined to the following commutative diagram where each row and column is a triangle.
\[
  \xymatrix @+0.5pc {
    a' \ar^-{\Sigma^{-1}\varphi}[r] \ar@{=}[d] & \Sigma^{-1}c \ar[r] \ar^{\delta}[d] & \Sigma^{-1}a'' \ar[r] \ar[d] & \Sigma a' \ar@{=}[d] \\
    a' \ar^{\delta\Sigma^{-1}\varphi}[r] \ar[d] & a^0 \ar[r] \ar^{\alpha^0}[d] & d \ar[r] \ar[d] & \Sigma a' \ar[d] \\
    0 \ar[r] \ar[d] & a^1 \ar@{=}[r] \ar[d] & a^1 \ar[r] \ar[d] & 0 \ar[d] \\
    \Sigma a' \ar_{\varphi}[r] & c \ar[r] & a'' \ar[r] & \Sigma^2 a'
                    }
\]
The diagram implies $\alpha^0 \circ \delta\Sigma^{-1}\varphi = 0$.  Since $\delta\Sigma^{-1}\varphi$ is a morphism in $\cA$ while $\alpha^0$ is a categorial monomorphism in $\cA$, it follows that $\delta\Sigma^{-1}\varphi = 0$.  Hence the diagram is isomorphic to the following.
\[
  \xymatrix @+0.5pc {
    a' \ar^-{\Sigma^{-1}\varphi}[r] \ar@{=}[d] & \Sigma^{-1}c \ar[r] \ar^{\delta}[d] & \Sigma^{-1}a'' \ar[r] \ar[d] & \Sigma a' \ar@{=}[d] \\
    a' \ar^{0}[r] \ar[d] & a^0 \ar^-{\begin{psmallmatrix} \id \\ 0 \end{psmallmatrix}}[r] \ar^{\alpha^0}[d] & a^0 \oplus \Sigma a' \ar^-{(0,\id)}[r] \ar^{(\alpha^0,0)}[d] & \Sigma a' \ar[d] \\
    0 \ar[r] \ar[d] & a^1 \ar@{=}[r] \ar[d] & a^1 \ar[r] \ar[d] & 0 \ar[d] \\
    \Sigma a' \ar_{\varphi}[r] & c \ar[r] & a'' \ar[r] & \Sigma^2 a'
                    }
\]
The morphism $a^0 \oplus \Sigma a' \xrightarrow{} a^1$ is indeed $(\alpha^0,0)$ since the first component is $\alpha^0$ by commutativity of the middle square while the second component is zero because \ConditionEShort{-1}{\cC}{\cA}.  The third column of the diagram implies $a'' \cong \cone( \alpha^0 ) \oplus \Sigma^2 a'$.  In particular, $c \cong \cone( \alpha^0 )$ is a direct summand of $a'' \in \cA$ whence $c \in \cA$ as desired.
\end{proof}

\section{Abelian subcategories of triangulated categories induced by $w$-orthogonal collections}
\label{sec:abelian}

This section proves Theorems \ref{thm:16c} and \ref{thm:20}, which imply Theorems \ref{thm:A} and \ref{thm:B} from the introduction.

\begin{Setup}
\label{set:abelian}
In this section $\cS = \{ s_1, \ldots, s_e \}$ is a fixed $2$-orthogonal collection in $\cC$.
\end{Setup}

\begin{Remark}
\label{rmk:S_and_A}
The results of Section \ref{sec:exact} apply to $\cA = \langle \cS \rangle$; in particular, $( \langle \cS \rangle,\cE_{ \langle \cS \rangle } )$ is an exact category by Theorem \ref{thm:Dyer}(i), and we will use this without further comment a number of times.  To see that Section \ref{sec:exact} applies, we check Setup \ref{set:exact} for $\cA = \langle \cS \rangle$:  It follows from \cite[lem.\ 2.7]{D} that $\langle \cS \rangle \subseteq \cC$ is an additive subcategory which is closed under extensions by construction; see Definition \ref{def:extension_closure}.  Definition \ref{def:SMS}(ii) implies
\begin{equation}
\label{equ:ConditionEShortforS}
  \mbox{\ConditionEShort{-1}{\cC}{\langle \cS \rangle}.}
\end{equation}
\end{Remark}

\begin{Lemma}
\label{lem:16a}
Let $a^0 \xrightarrow{ \alpha^0 } a^1$ be a morphism in $\langle \cS \rangle$ such that $\cC( s,\alpha^0 )$ is injective for each $s \in \cS$.  Then $\alpha^0$ is a categorical monomorphism in $\langle \cS \rangle$.  
\end{Lemma}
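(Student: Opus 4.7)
The plan is to argue by induction on the least integer $n \geqslant 0$ with $x \in ( \cS )_n$, where $\beta : x \to a^0$ is an arbitrary morphism in $\langle \cS \rangle$ satisfying $\alpha^0 \beta = 0$; the goal is to conclude $\beta = 0$. The base case $n = 0$ is trivial because $x \cong 0$. For the inductive step with $n \geqslant 1$, I would use $x \in ( \cS )_{ n-1 } * ( \cS \cup \{ 0 \} )$ to produce a triangle $u \xrightarrow{ \iota } x \xrightarrow{ \pi } s' \xrightarrow{ \delta } \Sigma u$ with $u \in ( \cS )_{ n-1 }$ and $s' \in \cS \cup \{ 0 \}$.

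The induction would then proceed in two moves. Since $\alpha^0 ( \beta \iota ) = 0$ and $u \in ( \cS )_{ n-1 }$, the induction hypothesis applied to $\beta \iota$ forces $\beta \iota = 0$. Next I would invoke Lemma \ref{lem:3}, which applies to $\cA = \langle \cS \rangle$ by Remark \ref{rmk:S_and_A}: the short triangle $u \xrightarrow{ \iota } x \xrightarrow{ \pi } s'$ is a kernel-cokernel pair inside $\langle \cS \rangle$, so $\pi$ is a categorical cokernel of $\iota$. Hence $\beta \iota = 0$ yields a factorisation $\beta = \gamma \pi$ for some $\gamma : s' \to a^0$; because cokernels are automatically epimorphisms, the equality $\alpha^0 \gamma \pi = \alpha^0 \beta = 0$ upgrades to $\alpha^0 \gamma = 0$. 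If $s' = 0$ then $\gamma = 0$ trivially; if $s' \in \cS$ then injectivity of $\cC( s',\alpha^0 )$ gives $\gamma = 0$. Either way $\beta = \gamma \pi = 0$, closing the induction.

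The main obstacle will be the passage from $\alpha^0 \gamma \pi = 0$ to $\alpha^0 \gamma = 0$. A purely triangulated attempt would factor $\alpha^0 \gamma$ through the connecting morphism $\delta : s' \to \Sigma u$, but there is no reason for this factor to vanish: the $2$-orthogonality hypothesis controls only negative self extensions of $\cS$, and positive self extensions such as $\cC( s',\Sigma u )$ may very well be non-zero. The resolution is to re-interpret $u \xrightarrow{ \iota } x \xrightarrow{ \pi } s'$ as a categorical cokernel diagram inside $\langle \cS \rangle$ via Lemma \ref{lem:3}, at which point the epimorphism property of $\pi$ delivers the required cancellation for free. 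In this sense the difficulty is structural rather than calculational, and it is precisely what the exact-category machinery of Section \ref{sec:exact} is designed to supply.
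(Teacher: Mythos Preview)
Your proof is correct. Both you and the paper induct on the least $n$ with the test object in $(\cS)_n$ and peel off a simple via a triangle, but the inductive step is handled differently. The paper applies $\cC(-,a^0)$ and $\cC(-,a^1)$ to the triangle $a' \to a \to s \to \Sigma a'$, observes that the terms $\cC(\Sigma a',a^i)$ vanish by Equation~\eqref{equ:ConditionEShortforS}, and then invokes the Four Lemma on the resulting diagram of exact rows to conclude that $\cC(a,\alpha^0)$ is injective. You instead route the same vanishing through Lemma~\ref{lem:3}: the short triangle becomes a kernel--cokernel pair in $\langle\cS\rangle$, so $\pi$ is a categorical epimorphism there, and the factorisation $\beta = \gamma\pi$ together with cancellation of $\pi$ finishes the argument. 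The paper's version is marginally more self-contained (just long exact sequences and the Four Lemma), while yours makes explicit that the crucial cancellation step is a feature of the exact structure on $\langle\cS\rangle$ rather than an ad hoc computation---a point the paper only makes implicitly.
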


\begin{proof}
We must show that if $a \in \langle \cS \rangle$ then $\langle \cS \rangle( a,\alpha^0 )$ is injective, which is the same as showing that
\begin{equation}
\label{equ:16a_conclusion}
  \mbox{$\cC( a,\alpha^0 )$ is injective.}
\end{equation}
By Definition \ref{def:extension_closure} we can assume $a \in ( \cS )_n$ for a non-negative integer $n$ and will prove \eqref{equ:16a_conclusion} by induction on $n$.  If $n=0$ then $a=0$ so \eqref{equ:16a_conclusion} is trivially true.

For the induction, assume \eqref{equ:16a_conclusion} is true for $a \in ( \cS )_{ n-1 }$ and let $a \in ( \cS )_n \setminus ( \cS )_{ n-1 }$ be given.  Definition \ref{def:extension_closure} gives a triangle $a' \xrightarrow{} a \xrightarrow{} s \xrightarrow{} \Sigma a'$ with $a' \in ( \cS )_{ n-1 }$ and $s \in \cS$.  It induces the following commutative diagram with exact rows.
\[
\vcenter{
  \xymatrix @+0.5pc {
    \cC( \Sigma a',a^0 ) \ar[r] \ar[d] & \cC( s,a^0 ) \ar[r] \ar_{ \cC( s,\alpha^0 ) }[d] & \cC( a,a^0 ) \ar[r] \ar^{ \cC( a,\alpha^0 ) }[d] & \cC( a',a^0 ) \ar^{ \cC( a',\alpha^0 ) }[d] \\
    \cC( \Sigma a',a^1 ) \ar[r] & \cC( s,a^1 ) \ar[r] & \cC( a,a^1 ) \ar[r] & \cC( a',a^1 ) \\
                    }
        }
\]
The first term in each row is zero by Equation \eqref{equ:ConditionEShortforS}, and the map $\cC( s,\alpha^0 )$ is injective by the assumption in the lemma.  Since $a' \in ( \cS )_{ n-1 }$, by induction $\cC( a',\alpha^0 )$ is injective.  Hence $\cC( a,\alpha^0 )$ is injective by the Four Lemma, proving \eqref{equ:16a_conclusion}. 
\end{proof}

\begin{Lemma}
\label{lem:16b}
Each morphism $a^0 \xrightarrow{ \alpha^0 } a^1$ in $\langle \cS \rangle$ can be factorised in $\langle \cS \rangle$ as
\begin{equation}
\label{equ:16c_factorisation}
  \mbox{$a^0 \xrightarrow{\sigma} a \xrightarrow{\iota} a^1$ with }\;
  \left\{
    \begin{array}{l}
      \mbox{$\sigma$ a deflation in $\langle \cS \rangle$,} \\[1.5mm]
      \mbox{$\iota$ a categorial monomorphism in $\langle \cS \rangle$.}
    \end{array}
  \right.
\end{equation}
\end{Lemma}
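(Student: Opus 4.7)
The plan is to induct on the smallest $n \geqslant 0$ with $a^0 \in ( \cS )_n$, where $(\cS)_n$ is defined as in Definition \ref{def:extension_closure}. The base case $n=0$ forces $a^0 = 0$ and the trivial factorisation works.

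For the inductive step, pick a conflation $a'^0 \xrightarrow{\beta} a^0 \xrightarrow{\rho} s$ in $\langle \cS \rangle$ with $a'^0 \in (\cS)_{n-1}$ and $s \in \cS$, and apply the inductive hypothesis to $\alpha^0 \beta : a'^0 \to a^1$ to obtain a factorisation $a'^0 \xrightarrow{\sigma'} b \xrightarrow{\iota'} a^1$ with $\sigma'$ a deflation and $\iota'$ a categorical monomorphism in $\langle \cS \rangle$. Using axiom [E2] for the exact category $(\langle \cS \rangle, \cE_{ \langle \cS \rangle })$ from Theorem \ref{thm:Dyer}(i), I form the pushout of $\sigma'$ along the inflation $\beta$. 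The resulting square is bicartesian in $\langle \cS \rangle$ by Lemma \ref{lem:2}, and the standard $3 \times 3$ analysis produces a conflation $b \xrightarrow{\beta^*} d \xrightarrow{\pi} s$ (so $d \in \langle \cS \rangle$) together with a conflation $k' \to a^0 \xrightarrow{\sigma^*} d$ (so $\sigma^*$ is a deflation); the pushout property provides a unique $\iota : d \to a^1$ with $\iota \sigma^* = \alpha^0$ and $\iota \beta^* = \iota'$. This gives the candidate factorisation $a^0 \xrightarrow{\sigma^*} d \xrightarrow{\iota} a^1$.

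The final step establishes a dichotomy: either $\iota$ is already a categorical monomorphism, in which case $\alpha^0 = \iota \sigma^*$ is the required factorisation, or the conflation $b \to d \to s$ splits, in which case we refactor through $b$. Suppose $\iota$ fails to be a categorical monomorphism; by Lemma \ref{lem:16a} there exist $r \in \cS$ and a nonzero $f : r \to d$ with $\iota f = 0$. Applying $\cC( r,- )$ to the conflation $b \to d \to s$, the $2$-orthogonality condition $\cC( r,\Sigma^{-1}s ) = 0$ yields a short exact sequence $0 \to \cC( r,b ) \to \cC( r,d ) \xrightarrow{\pi_*} \cC( r,s )$. If $\pi f = 0$, then $f$ lifts along $\beta^*$ to some $f' : r \to b$ with $\iota' f' = 0$; the inductive hypothesis that $\iota'$ is a categorical monomorphism then forces $f' = 0$ and hence $f = 0$, a contradiction. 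Thus $\pi f \neq 0$; since $\cC( r,s ) = 0$ unless $r = s$, this forces $r = s$, and $\pi f$ is invertible in the skew field $\cC( s,s )$. The element $\lambda := f ( \pi f )^{-1}$ is then a section of $\pi$, splitting the conflation as $d \cong b \oplus s$. Under this splitting $\iota \lambda = 0$, so $\iota = \iota' \circ \operatorname{pr}_b$; composing $\sigma^*$ with the deflation $\operatorname{pr}_b : d \to b$ yields a deflation $\sigma'' : a^0 \to b$ with $\alpha^0 = \iota' \sigma''$, which is the desired factorisation.

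The main obstacle is precisely this final dichotomy: the $\iota$ produced by the pushout is not automatically a categorical monomorphism, and one must show that every obstruction to monicness forces the bottom row of the pushout diagram to split in a controlled way. The argument leans essentially on the combination of Lemma \ref{lem:16a} (testing monomorphisms against $\cS$), the $2$-orthogonality hypothesis (which shortens the long exact sequence so that any nonzero obstruction lives in $\cC( r,s )$), and the skew-field property of objects of $\cS$ (which converts any such nonzero obstruction into an honest section of $\pi$).
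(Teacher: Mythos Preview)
Your proof is correct, but it proceeds quite differently from the paper's argument. Both induct on the least $n$ with $a^0 \in (\cS)_n$, but the paper organises the inductive step around the \emph{failure} of monicness: it first tests whether $\alpha^0$ is already a categorical monomorphism, and if not, uses Lemma~\ref{lem:16a} to find a nonzero $s \xrightarrow{\varphi} a^0$ with $\alpha^0\varphi=0$, takes the cofibre $a^0 \xrightarrow{\theta} a'$, invokes the dual of \cite[lem.~2.6]{D} to see $a' \in (\cS)_{n-1}$, and then applies induction to the induced map $a' \to a^1$. The composite $\sigma'\theta$ is the required deflation. By contrast, you induct along the \emph{structural} conflation $a'^0 \to a^0 \to s$ coming from the definition of $(\cS)_n$, push out the factorisation obtained for $a'^0$, and then run a separate splitting argument to repair the case where the pushed-out $\iota$ fails to be monic.

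The trade-offs are as follows. The paper's route is shorter and avoids any $3\times 3$ analysis, but relies on the external input \cite[lem.~2.6]{D} to guarantee that the cofibre $a'$ lies in $(\cS)_{n-1}$. Your route is more self-contained in that respect, but needs the fact that the map $\sigma^* \colon a^0 \to d$ parallel to the deflation $\sigma'$ in the bicartesian square is again a deflation; this does not follow from the exact-category axioms alone, and your phrase ``standard $3\times 3$ analysis'' is really invoking the triangulated $3\times 3$ lemma (e.g.\ \cite[1.1.11]{BBD}) applied to the map of triangles $(\sigma',\sigma^*,\id_s)$, which gives $\operatorname{cone}(\sigma^*) \cong \operatorname{cone}(\sigma') \cong \Sigma k'$ and hence the conflation $k' \to a^0 \xrightarrow{\sigma^*} d$. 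With that point made explicit, your argument is complete.
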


\begin{proof}
By Definition \ref{def:extension_closure} we can assume $a^0 \in ( \cS )_n$ for a non-negative integer $n$ and will prove the lemma by induction on $n$.  If $n=0$ then $a^0=0$ so the factorisation $a^0 \xrightarrow{} 0 \xrightarrow{} a^1$ can be used in \eqref{equ:16c_factorisation}.

For the induction, assume the lemma holds when $a^0 \in ( \cS )_{ n-1 }$ and let $a^0 \xrightarrow{ \alpha^0 } a^1$ in $\langle \cS \rangle$ with $a^0 \in ( \cS )_n$ be given.  If $\alpha^0$ is a categorical monomorphism in $\langle \cS \rangle$, then the factorisation $a^0 \xrightarrow{ \id } a^0 \xrightarrow{ \alpha^0 } a^1$ can be used in \eqref{equ:16c_factorisation}.  

If $a^0 \xrightarrow{ \alpha^0 } a^1$ is not a categorical monomorphism in $\langle \cS \rangle$, then Lemma \ref{lem:16a} gives an object $s \in \cS$ and a non-zero morphism $s \xrightarrow{ \varphi } a^0$ such that $\alpha^0\varphi = 0$.  We can complete $\varphi$ to a short triangle $s \xrightarrow{ \varphi } a^0 \xrightarrow{ \theta } a'$, and the dual of \cite[lem.\ 2.6]{D} says $a' \in ( \cS )_{ n-1 }$ so the short triangle is a conflation in $\langle \cS \rangle$ whence $\theta$ is a deflation.

We can also complete $\alpha^0$ to a short triangle
$c \xrightarrow{ \gamma } a^0 \xrightarrow{ \alpha^0 } a^1$, and since $\alpha^0\varphi = 0$ we can factorise $\varphi$ through $\gamma$, then use the octahedral axiom to get the following commutative diagram where each row and column is a short triangle.
\[
  \xymatrix @+0.5pc {
    s \ar@{=}[r] \ar[d] & s \ar[r] \ar_{\varphi}[d] & 0 \ar[d] \\
    c \ar^{\gamma}[r] \ar[d] & a^0 \ar^{\alpha^0}[r] \ar_{\theta}[d] & a^1 \ar@{=}[d] \\
    c' \ar[r] & a' \ar_{\alpha'}[r] & a^1 \\
                    }
\]
Since $a' \in ( \cS )_{ n-1 }$, by induction $a' \xrightarrow{ \alpha' } a^1$ can be factorised as $a' \xrightarrow{ \sigma' } a \xrightarrow{ \iota' } a^1$ with $\sigma'$ a deflation and $\iota'$ a categorical monomorphism in $\langle \cS \rangle$.  Hence the factorisation $a^0 \xrightarrow{ \sigma'\theta } a \xrightarrow{ \iota' } a^1$ can be used in \eqref{equ:16c_factorisation} because $\sigma'\theta$ is a deflation in $\langle \cS \rangle$ since $\sigma'$ and $\theta$ are deflations.  
\end{proof}

\begin{Theorem}
\label{thm:16c}
Recall that $\cS$ is a $2$-orthogonal collection in $\cC$.  The category $\langle \cS \rangle$ has the following properties.
\begin{enumerate}
\setlength\itemsep{4pt}

  \item  $\langle \cS \rangle \subseteq \cC$ is a proper abelian subcategory.

  \item  Up to isomorphism the simple objects of $\langle \cS \rangle$ are the objects in $\cS$. 
  
  \item  Each object of $\langle \cS \rangle$ has finite length.

  \item  For $a',a'' \in \langle \cS \rangle$ there is a natural isomorphism 
\[
  \Ext_{ \langle \cS \rangle }( a'',a' )
  \xrightarrow{ \Phi }
  \cC( a'',\Sigma a' ).
\]
  
  \item  The abelian category $\langle \cS \rangle$ has enough projective objects if and only if so does the exact category $( \langle \cS \rangle,\cE_{ \langle \cS \rangle } )$.

\end{enumerate}

\end{Theorem}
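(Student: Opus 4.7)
The approach builds on the exact category $(\langle \cS \rangle, \cE_{\langle \cS \rangle})$ from Theorem \ref{thm:Dyer}(i) and the factorization from Lemma \ref{lem:16b}. I would show this exact category is in fact abelian by proving every categorical monomorphism is an inflation and every categorical epimorphism is a deflation. First I would formally dualize Lemma \ref{lem:16a} and Lemma \ref{lem:16b}, using the self-dual nature of Setup \ref{set:exact} (the $\Ext^{-1}$ vanishing from \eqref{equ:ConditionEShortforS} and closure under extensions are both self-dual conditions) to obtain: a morphism $\alpha^0$ with $\cC(\alpha^0, s)$ surjective for every $s \in \cS$ is a categorical epimorphism, and every morphism factors as a categorical epimorphism followed by an inflation. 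Given a categorical mono $\mu\colon a \to b$, applying Lemma \ref{lem:16b} gives $\mu = \iota\sigma$ with $\sigma$ a deflation; since $\mu$ is cat mono, so is $\sigma$, which combined with $\sigma$ sitting in a conflation $k \to a \xrightarrow{\sigma} m$ forces the first map to be the zero inflation and hence $k = 0$, making $\sigma$ an isomorphism. Thus $\mu$ is isomorphic on the source to $\iota$. To pass from ``$\iota$ is a cat mono'' to ``$\iota$ is an inflation'' I would invoke the dual factorization of $\iota$ as an inflation followed by a cat epi and argue, via a joint induction on the composition length of the involved objects, that the residual cat-epi factor is forced to be an isomorphism.

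Once the exact category is abelian, its short exact sequences are by construction the conflations in $\cE_{\langle \cS \rangle}$, i.e., the short triangles in $\cC$ with terms in $\langle \cS \rangle$. This is the definition of a proper abelian subcategory (Definition \ref{def:short_triangles_and_good_abelian_subcategories}(ii)), completing (i). For (ii), each $s \in \cS$ has $\End(s) = \cC(s,s)$ a skew field by Definition \ref{def:SMS}(i), hence is indecomposable; a hypothetical proper non-zero sub $a \hookrightarrow s$ in $\langle \cS \rangle$ is ruled out by combining Lemma \ref{lem:16a} (which produces a non-zero morphism from some $s_j \in \cS$ into $a$), the orthogonality $\cC(s_i, s_j) = 0$ for $i \neq j$ from Definition \ref{def:SMS}(i), and the skew-field property of $\cC(s, s)$. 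The converse (every simple of $\langle \cS \rangle$ lies in $\cS$ up to isomorphism) is a short induction on $n$ using the filtration $\langle \cS \rangle = \bigcup_{n \geq 0} (\cS)_n$ from Definition \ref{def:extension_closure}. Part (iii) is immediate from the same filtration: an object of $(\cS)_n$ admits a composition series of length at most $n$ by (ii).

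Parts (iv) and (v) are then formal consequences of (i). For (iv): by (i) the abelian short exact sequences coincide with the conflations in $\cE_{\langle \cS \rangle}$, so the abelian $\Ext$ in $\langle \cS \rangle$ agrees with the exact-category $\Ext_{(\langle \cS \rangle, \cE_{\langle \cS \rangle})}$ from Remark \ref{rmk:NPexa2.13}, and Theorem \ref{thm:Dyer}(ii) supplies the natural isomorphism $\Phi$. For (v): projectivity in the abelian category requires $\Hom(p, -)$ to preserve short exact sequences, which by (i) is the same as preserving conflations, i.e., projectivity in the exact category; consequently the two ``has enough projectives'' statements are equivalent.

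The main obstacle is (i), specifically promoting categorical monomorphisms to inflations (and dually). The self-duality of Setup \ref{set:exact} renders the dual of Lemma \ref{lem:16b} routine, but combining the original factorization with its dual to eliminate the residual cat-epi factor in $\iota$ demands a careful joint-inductive argument that treats monos and epis in tandem; this is the one place where neither Lemma \ref{lem:16a}--\ref{lem:16b} nor the Dyer machinery of Section \ref{sec:exact} is immediately decisive.
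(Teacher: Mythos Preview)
Your treatment of parts (ii)--(v) is essentially correct and close to the paper's (your argument for simplicity of the $s \in \cS$ differs from the paper's, which instead uses \cite[lem.\ 2.6]{D} to show the cokernel of a nonzero map into $s$ vanishes, but your version is fine).  The genuine gap is exactly where you locate it: part (i), specifically the step ``categorical monomorphism $\Rightarrow$ inflation''.

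The paper does \emph{not} attempt a joint induction here.  Instead it invokes Lemma \ref{lem:5b}, which you have overlooked: under the hypothesis $\cA * (\Sigma\cA) \subseteq (\Sigma\cA) * \cA$, every categorical monomorphism in $\cA$ is an inflation.  For $\cA = \langle \cS \rangle$ this $*$-inclusion is supplied by \cite[lem.\ 2.7]{CSP1}; it is a nontrivial structural fact about orthogonal collections, not a formal consequence of Setup \ref{set:exact} or of Lemmas \ref{lem:16a}--\ref{lem:16b}.  Once Lemma \ref{lem:5b} is available, the $\iota$ produced by Lemma \ref{lem:16b} is already an inflation, so every morphism in $\langle \cS \rangle$ factors as a deflation followed by an inflation, and \cite[ex.\ 8.6(i)]{B} then gives abelianness directly.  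No dual of Lemma \ref{lem:16b} and no joint induction are needed.

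Your proposed route is not obviously salvageable.  The reduction you sketch (factor the categorical monomorphism $\mu$ via Lemma \ref{lem:16b}, observe the deflation factor is an isomorphism) only recovers that $\mu$ is a categorical monomorphism, which is where you started.  Passing to the dual factorization $\mu = \pi\kappa$ with $\kappa$ an inflation and $\pi$ a categorical epimorphism, you need $\pi$ to be an isomorphism; but from $\mu$ monic you cannot conclude $\pi$ monic, and iterating the two factorizations does not visibly terminate.  The missing input is precisely control over the cone of a morphism in $\langle \cS \rangle$, which is what the inclusion $\langle \cS \rangle * \Sigma\langle \cS \rangle \subseteq \Sigma\langle \cS \rangle * \langle \cS \rangle$ encodes and what Lemma \ref{lem:5b} exploits.
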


\begin{proof}
(i):  We have $\langle \cS \rangle * ( \Sigma \langle \cS \rangle ) \subseteq ( \Sigma \langle \cS \rangle ) * \cS$ by \cite[lem.\ 2.7]{CSP1}.  Note that the methods of \cite{CSP1} apply despite our notion of $w$-orthogonal collections being slightly more general than theirs.  It follows by Lemma \ref{lem:5b} that $\iota$ in Lemma \ref{lem:16b} is an inflation, and hence Lemma \ref{lem:16b} and \cite[ex.\ 8.6(i)]{B} combine to show that $\langle \cS \rangle \subseteq \cC$ is a proper abelian subcategory.

(ii):  On the one hand, let $a$ be a simple object of the abelian category $\langle \cS \rangle$.  In particular $a$ is non-zero, so Definition \ref{def:extension_closure} means that $a \in ( \cS )_n$ for a positive integer $n$ and hence that there is a short triangle $a' \xrightarrow{} a \xrightarrow{ \alpha } s$ with $a' \in ( \cS )_{ n-1 }$.  This is a short exact sequence in $\langle \cS \rangle$ by part (i), so $\alpha$ is an epimorphism in $\langle \cS \rangle$, hence an isomorphism since $a$ is simple.  

On the other hand, let $s$ be in $\cS$ and let $a' \xrightarrow{ \alpha' } s \xrightarrow{ \sigma } a''$ be a short exact sequence in $\langle \cS \rangle$ with $\alpha' \neq 0$.  By part (i) it gives a triangle $\Sigma^{-1} a'' \xrightarrow{} a' \xrightarrow{ \alpha' } s \xrightarrow{ \sigma } a''$, and it follows from \cite[lem.\ 2.6]{D} that $\Sigma^{-1}a'' \in \langle \cS \rangle$.  To show $s$ simple it is enough to show $a'' \cong 0$, which now follows from
\[
  \cC( a'',a'' ) \cong
  \cC( \Sigma^{-1}a'',\Sigma^{-1}a'' ) = 0.
\]
The equality holds by Equation \eqref{equ:ConditionEShortforS} since $\Sigma^{-1}a'',a'' \in \langle \cS \rangle$.

(iii):  By Definition \ref{def:extension_closure} we can assume $a \in ( \cS )_n$ for a non-negative integer $n$ and will prove that $a$ has finite length by induction on $n$, noting that if $n=0$ then $a=0$ does have finite length.  Now assume each $a \in ( \cS )_{ n-1 }$ has finite length and let $a \in ( \cS )_n \setminus ( \cS )_{ n-1 }$ be given.  Definition \ref{def:extension_closure} gives a triangle $a' \xrightarrow{} a \xrightarrow{} s \xrightarrow{} \Sigma a'$ with $a' \in ( \cS )_{ n-1 }$ and $s \in \cS$, hence by part (i) a short exact sequence $a' \xrightarrow{} a \xrightarrow{} s$ in $\langle \cS \rangle$ where $a'$ has finite length by induction while $s$ is simple by part (ii).

(iv), (v):  We know that $( \langle \cS \rangle,\cE_{ \langle \cS \rangle } )$ is an exact category, see Remark \ref{rmk:S_and_A}.  Part (i) says that $\langle \cS \rangle$ is an abelian category whose short exact sequences are the diagrams in $\cE_{ \langle \cS \rangle }$.  This immediately implies (v), and it gives a natural isomorphism
\[
  \Ext_{ \langle \cS \rangle }( a'',a' ) \cong
  \Ext_{ (\langle \cS \rangle,\cE_{ \langle \cS \rangle } ) }( a'',a' )
\]
which can be combined with Theorem \ref{thm:Dyer}(ii) to give (iv).  
\end{proof}

\begin{Theorem}
\label{thm:20}
Recall that $\cS$ is a $2$-orthogonal collection in $\cC$.  Assume that each object of $\langle \cS \rangle$ has a $\Sigma \langle \cS \rangle$-envelope.  Then the following hold.
\begin{enumerate}
\setlength\itemsep{4pt}

  \item  $\langle \cS \rangle \simeq \mod\,A$ for a finite dimensional $k$-algebra $A$.  

  \item  If $k$ is algebraically closed, then the Gabriel quiver of $A$ has vertices the elements of $\cS = \{ s_1, \ldots, s_e \}$ and $\dim_k \cC( s_j,\Sigma s_i )$ arrows from $s_i$ to $s_j$. 

\end{enumerate}
\end{Theorem}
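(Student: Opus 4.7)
The plan is to construct the algebra $A$ via Morita theory applied to a projective generator of $\langle \cS \rangle$, and then read off its Gabriel quiver using Theorem \ref{thm:16c}(iv).

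First I would establish that $\langle \cS \rangle$ has enough projective objects as an abelian category. The hypothesis on $\Sigma\langle \cS \rangle$-envelopes combined with Proposition \ref{pro:9} (which applies by Remark \ref{rmk:S_and_A}) yields enough projectives in the exact category $(\langle \cS \rangle, \cE_{\langle \cS \rangle})$, and Theorem \ref{thm:16c}(v) transfers this to the abelian category. Next I would observe that $\langle \cS \rangle$ is Krull--Schmidt: it is $\Hom$-finite and closed under direct summands in $\cC$, so each endomorphism ring is a finite-dimensional $k$-algebra in which idempotents split, and hence indecomposables have local endomorphism rings. Combined with the finite-length property of Theorem \ref{thm:16c}(iii) and enough projectives, this implies that each simple $s_i$ admits an indecomposable projective cover $P_i \in \langle \cS \rangle$.

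For (i), set $P = \bigoplus_{i=1}^{e} P_i$, a projective generator of $\langle \cS \rangle$, and let $A = \End_{\langle \cS \rangle}(P)^{\opp}$. Since $P \in \cC$ and $\cC$ is $\Hom$-finite, $A$ is a finite-dimensional $k$-algebra, and standard Morita theory yields an equivalence $\Hom_{\langle \cS \rangle}(P, -) \colon \langle \cS \rangle \xrightarrow{\;\sim\;} \mod\,A$. For (ii), assuming $k$ algebraically closed, each $\End_{\langle \cS \rangle}(P_i)$ is a finite-dimensional local $k$-algebra with residue field $k$, so $A$ is basic with primitive idempotents indexed by $\cS = \{s_1, \ldots, s_e\}$. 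Gabriel's theorem then identifies the Gabriel quiver of $A$ by counting arrows in terms of $\Ext^1$-groups between the simples, and Theorem \ref{thm:16c}(iv) converts these counts into $\dim_k \cC(s_j, \Sigma s_i)$ arrows from $s_i$ to $s_j$.

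The main obstacle is essentially convention-tracking: one has to choose sides for the Morita equivalence so that the composition with Gabriel's theorem delivers precisely the stated asymmetry $\cC(s_j, \Sigma s_i)$ rather than $\cC(s_i, \Sigma s_j)$. Everything else is a routine application of Morita theory in the $\Hom$-finite Krull--Schmidt abelian setting established by Theorem \ref{thm:16c}.
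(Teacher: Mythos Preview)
Your proposal is correct and follows essentially the same route as the paper: enough projectives via Proposition~\ref{pro:9} and Theorem~\ref{thm:16c}(v), the Krull--Schmidt property inherited from $\cC$, projective covers of the simples assembled into a projective generator, Morita theory, and then Gabriel's theorem combined with Theorem~\ref{thm:16c}(iv). The only point the paper makes more explicit is the converse direction---that \emph{every} indecomposable projective arises as some $P_i$ (shown by arguing that an indecomposable projective has local endomorphism ring, hence a unique maximal subobject, hence a simple top in $\cS$)---which you need to justify that $P=\bigoplus P_i$ is a generator; but this is routine and implicit in your ``standard Morita theory'' clause.
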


\begin{proof}
(i):  Observe that $\langle \cS \rangle$ is a Krull--Schmidt category because it is an additive subcategory of $\cC$, see Remark \ref{rmk:S_and_A}, while $\cC$ is Krull--Schmidt since it satisfies Setup \ref{set:blanket}.  Hence, if $p$ is an indecomposable projective object of $\langle \cS \rangle$, then the algebra $\End_{ \langle \cS \rangle }( p )$ is local.  It follows by \cite[prop.\ 3.7]{Krause} that there is a unique maximal non-trivial subobject $r$ of $p$.  The reason \cite[prop.\ 3.7]{Krause} applies is that $\langle \cS \rangle$ is essentially small while each object has finite length by Theorem \ref{thm:16c}(iii).  The quotient $p/r$ is simple, hence isomorphic to some $s \in \cS$ by Theorem \ref{thm:16c}(ii), so there is an epimorphism $p \xrightarrow{} s$.  It follows that $p$ is a projective cover of $s$ by \cite[lem.\ 3.6]{Krause}, hence determined up to isomorphism by $s$ by \cite[cor.\ 3.5]{Krause}.

The abelian category $\langle \cS \rangle$ has enough projective objects by Proposition \ref{pro:9} and Theorem \ref{thm:16c}(v).  Combining with the previous paragraph shows that up to isomorphism, the indecomposable projective objects of $\langle \cS \rangle$ are precisely given by projective covers $\{ p_1, \ldots, p_e \}$ of the objects in $\cS = \{ s_1, \ldots, s_e \}$ whence $g = p_1 \oplus \cdots p_e$ is a projective generator.  It follows that $A = \End_{ \langle \cS \rangle }(g)$ satisfies $\langle \cS \rangle \simeq \mod\,A$ by \cite[III.3, proposition]{AQM}.

(ii):  Combine part (i) with Theorem \ref{thm:16c}(ii)+(iv) and \cite[prop.\ III.1.14]{ARS}.
\end{proof}

\begin{Remark}
\label{rmk:Thm_A_does_not_extend}
Theorems \ref{thm:16c} and \ref{thm:20} do not extend to $1$-orthogonal collections.  For instance, if $Q$ is a Dynkin quiver of type $A_e$, then the simple modules in $\mod\,kQ$ give a $1$-orthogonal collection $\cS$ in the negative cluster category $\cC_{ -1 }( Q ) = \cD^{\b}( \mod\,kQ ) / \tau\Sigma^2$, see \cite[lem.\ 4.2(1)]{CS2}, and we even have $\langle \cS \rangle = \cC_{ -1 }( Q )$.  However, $\cC_{ -1 }( Q )$ is not abelian for $e \geqslant 2$.
\end{Remark}

\begin{Remark}
\label{rmk:AB}
The assumption in Theorem \ref{thm:A} implies Setup \ref{set:abelian} because a $w$-orthogonal collection is $2$-orthogonal when $w \geqslant 2$.  Hence Theorem \ref{thm:A} follows from Theorem \ref{thm:16c}(i)-(iii).  Similarly, the assumption in Theorem \ref{thm:B} implies Setup \ref{set:abelian}, and it implies the assumption in Theorem \ref{thm:20} by Lemma \ref{lem:functorial_finiteness}.  Hence Theorem \ref{thm:B} follows from Theorem \ref{thm:20}.
\end{Remark}

\section{Lemmas on torsion pairs}
\label{sec:torsion}

This section shows several lemmas on torsion pairs in the abelian category $\langle \cS \rangle$.

\begin{Setup}
\label{set:torsion}
In this section $\cS = \{ s_1, \ldots, s_e \}$ is a fixed $2$-orthogonal collection in $\cC$, and $( \cT,\cF )$ is a torsion pair in the extension closure $\langle \cS \rangle$, which is abelian by Theorem \ref{thm:16c}(i).
\end{Setup}

\begin{Lemma}
\label{lem:23}
Let $a^0 \xrightarrow{ \alpha^0 } a^1$ be a morphism in $\langle \cS \rangle$ with kernel $\Ker \alpha^0$ and cokernel $\Coker \alpha^0$.  If $a^0 \xrightarrow{ \alpha^0 } a^1 \xrightarrow{} c$ is a short triangle, then there is a short triangle $\Sigma \Ker \alpha^0 \xrightarrow{} c \xrightarrow{} \Coker \alpha^0$.  
\end{Lemma}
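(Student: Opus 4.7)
The plan is to use the factorisation of a morphism through its image in the abelian category $\langle \cS \rangle$, together with the octahedral axiom applied to this factorisation.

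First I would work inside the abelian category $\langle \cS \rangle$ (abelian by Theorem \ref{thm:16c}(i)) to factor $\alpha^0$ canonically as $a^0 \xrightarrow{\sigma} a \xrightarrow{\iota} a^1$, where $a = \Image \alpha^0$, the map $\sigma$ is an epimorphism with kernel $\Ker \alpha^0$, and $\iota$ is a monomorphism with cokernel $\Coker \alpha^0$. Because $\langle \cS \rangle \subseteq \cC$ is a \emph{proper} abelian subcategory, these two short exact sequences in $\langle \cS \rangle$ are in particular short triangles in $\cC$; extending them to full triangles yields
\[
  \Ker \alpha^0 \xrightarrow{} a^0 \xrightarrow{\sigma} a \xrightarrow{} \Sigma \Ker \alpha^0
  \quad\text{and}\quad
  a \xrightarrow{\iota} a^1 \xrightarrow{} \Coker \alpha^0 \xrightarrow{} \Sigma a.
\]

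Next I would apply the octahedral axiom to the composition $a^0 \xrightarrow{\sigma} a \xrightarrow{\iota} a^1$. This produces a commutative diagram in $\cC$ whose rows and columns are triangles, one of whose rows has the form $a^0 \xrightarrow{\iota\sigma} a^1 \xrightarrow{} c' \xrightarrow{} \Sigma a^0$ (for some cone $c'$ on $\iota\sigma = \alpha^0$), and one of whose columns has the form $\Sigma \Ker \alpha^0 \xrightarrow{} c' \xrightarrow{} \Coker \alpha^0 \xrightarrow{} \Sigma^2 \Ker \alpha^0$. The desired short triangle is then read off from this third column, once we identify $c'$ with $c$.

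Finally, I would argue that $c' \cong c$ because any two cones on $\alpha^0$ are isomorphic; more specifically, the triangle $a^0 \xrightarrow{\alpha^0} a^1 \to c' \to \Sigma a^0$ provided by the octahedron is isomorphic to the triangle extending the given short triangle $a^0 \xrightarrow{\alpha^0} a^1 \to c$. Transporting the column triangle along this isomorphism yields the required short triangle $\Sigma \Ker \alpha^0 \to c \to \Coker \alpha^0$.

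The only real point of care is bookkeeping inside the octahedron — making sure that the two short triangles selected above are placed so that their composition aligns with the given triangle on $\alpha^0$, and that the identification of cones is coherent. Everything else is a direct consequence of $\langle \cS \rangle$ being a proper abelian subcategory, so that image-factorisation inside $\langle \cS \rangle$ produces short triangles in $\cC$ to which the octahedral axiom can be applied.
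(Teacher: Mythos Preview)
Your proposal is correct and follows essentially the same route as the paper: factor $\alpha^0$ through its image in the proper abelian subcategory $\langle \cS \rangle$, use that the resulting short exact sequences are short triangles, and apply the octahedral axiom to read off the desired triangle from the third column. The only cosmetic difference is that the paper arranges the octahedron so that the given $c$ appears directly in the middle row, whereas you first obtain a cone $c'$ and then identify it with $c$; this is the same argument.
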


\begin{proof}
There are short exact sequences $\Ker \alpha^0 \xrightarrow{} a^0 \xrightarrow{ \sigma } \Image \alpha^0$ and $\Image \alpha^0 \xrightarrow{ \iota } a^1 \xrightarrow{} \Coker \alpha^0$
in $\langle \cS \rangle$ with $\alpha^0 = \iota\sigma$.  Since $\langle \cS \rangle \subseteq \cC$ is a proper abelian subcategory, the short exact sequences are short triangles.  By the octahedral axiom they can be combined to the following commutative diagram where each row and column is a short triangle.  
\[
  \xymatrix @+0.5pc {
    \Ker \alpha^0 \ar[r] \ar[d] & 0 \ar[r] \ar[d] & \Sigma \Ker \alpha^0 \ar[d] \\
    a^0 \ar^{\alpha^0}[r] \ar_{ \sigma }[d] & a^1 \ar[r] \ar@{=}[d] & c \ar[d] \\
    \Image \alpha^0 \ar_{ \iota }[r] & a^1 \ar[r] & \Coker \alpha^0 \\
                    }
\]
The right column is the short triangle claimed in the lemma.
\end{proof}

\begin{Lemma}
\label{lem:TF_closed_under_extensions}
Let $i$ be an integer, $\cV \subseteq \langle \cS \rangle$ an additive subcategory which is closed under extensions in the abelian category $\langle \cS \rangle$.  Then $\Sigma^i \cV$ is closed under extensions in the triangulated category $\cC$.

In particular, $\Sigma^i \cT$ and $\Sigma^i \cF$ are closed under extensions in the triangulated category $\cC$.  
\end{Lemma}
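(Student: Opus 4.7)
The plan is to reduce to the case $i=0$ and then transport the extension-closure property back and forth between the triangulated setting and the abelian setting via Theorem \ref{thm:16c}(i).

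First I would observe that, since $\Sigma$ is a triangulated autoequivalence of $\cC$, the property of being closed under extensions in $\cC$ is preserved by $\Sigma^{-i}$. So it suffices to prove the claim for $i=0$, i.e.\ to show that if $\cV \subseteq \langle \cS \rangle$ is closed under extensions in the abelian category $\langle \cS \rangle$, then $\cV$ is closed under extensions in the triangulated category $\cC$.

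For this, let $v' \to y \to v'' \to \Sigma v'$ be a triangle in $\cC$ with $v',v'' \in \cV$. Since $\cV \subseteq \langle \cS \rangle$ and $\langle \cS \rangle$ is closed under extensions in $\cC$ (by construction; see Remark \ref{rmk:S_and_A} and Definition \ref{def:extension_closure}), we get $y \in \langle \cS \rangle$. Now Theorem \ref{thm:16c}(i) asserts that $\langle \cS \rangle \subseteq \cC$ is a proper abelian subcategory, which by Definition \ref{def:short_triangles_and_good_abelian_subcategories}(ii) means precisely that short triangles with terms in $\langle \cS \rangle$ are short exact sequences in $\langle \cS \rangle$. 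Hence $v' \to y \to v''$ is a short exact sequence in $\langle \cS \rangle$ with $v',v'' \in \cV$, and the hypothesis that $\cV$ is closed under extensions in the abelian category $\langle \cS \rangle$ yields $y \in \cV$, as required.

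For the ``in particular'' clause, it suffices to observe that the torsion class $\cT$ and torsion-free class $\cF$ of any torsion pair in an abelian category are closed under extensions in that abelian category, so the first part applies to $\cV = \cT$ and $\cV = \cF$. There is no real obstacle here; the only point that requires care is keeping straight the two different ambient categories (the abelian $\langle \cS \rangle$ and the triangulated $\cC$) and noting that the bridge between them is exactly the ``proper'' part of the proper abelian subcategory structure provided by Theorem \ref{thm:16c}(i).
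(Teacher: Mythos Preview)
Your proof is correct and follows essentially the same route as the paper: reduce to $i=0$ via the autoequivalence $\Sigma$, use extension-closure of $\langle \cS \rangle$ in $\cC$ to get the middle term inside $\langle \cS \rangle$, then invoke the proper abelian subcategory property (Theorem \ref{thm:16c}(i)) to convert the short triangle into a short exact sequence and conclude. Your explicit justification of the ``in particular'' clause via the standard fact that torsion and torsion-free classes are extension-closed is a small addition the paper leaves implicit.
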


\begin{proof}
Since $\Sigma$ is an automorphism, it is enough to show that $\cV$ is closed under extensions in $\cC$, so let $v' \xrightarrow{} e \xrightarrow{} v''$ be a short triangle with $v',v'' \in \cV$.  In particular, $v',v'' \in \langle \cS \rangle$ so we have $e \in \langle \cS \rangle$.  Hence the short triangle is a short exact sequence in $\langle \cS \rangle$ whence $e \in \cV$.
\end{proof}

\begin{Lemma}
\label{lem:24_and_25_small_version}
The following are satisfied.
\begin{enumerate}
\setlength\itemsep{4pt}

  \item  $( \Sigma^{ -1 }\cT ) * \cF \subseteq
  \cF * ( \Sigma^{ -1 }\cT )$.

  \item  $\cF * ( \Sigma^{ -1 }\cT ) \subseteq \cC$ is an additive subcategory closed under extensions.   

\end{enumerate}
\end{Lemma}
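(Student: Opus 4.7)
For part (i), the plan is to start with an arbitrary $e \in (\Sigma^{-1}\cT) * \cF$, which is witnessed by a short triangle $\Sigma^{-1}t \to e \to f$ for some $t \in \cT$ and $f \in \cF$. Rotating this triangle yields a short triangle $f \xrightarrow{\gamma} t \to \Sigma e$ in which the connecting map $\gamma$ is a morphism between objects of the abelian category $\langle \cS \rangle$. I would then apply Lemma \ref{lem:23} to $\gamma$, which produces a short triangle $\Sigma \Ker \gamma \to \Sigma e \to \Coker \gamma$; desuspending gives a short triangle $\Ker \gamma \to e \to \Sigma^{-1}\Coker \gamma$. Because $(\cT,\cF)$ is a torsion pair in $\langle \cS \rangle$, the torsion-free class $\cF$ is closed under subobjects and the torsion class $\cT$ is closed under quotients, so $\Ker \gamma \in \cF$ and $\Coker \gamma \in \cT$. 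Hence $e \in \cF * (\Sigma^{-1}\cT)$.

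For (ii), the first step is to verify that $\cF * (\Sigma^{-1}\cT)$ is an additive subcategory of $\cC$. The key observation is that $\cC(\cF, \Sigma^{-1}\cT) \subseteq \cC(\langle \cS \rangle, \Sigma^{-1}\langle \cS \rangle) = 0$ by \eqref{equ:ConditionEShortforS}, and both $\cF$ and $\Sigma^{-1}\cT$ are additive subcategories of $\cC$, since they are closed under direct summands inside $\langle \cS \rangle$, which is itself an additive subcategory of $\cC$. Definition \ref{def:star}(ii) then supplies the conclusion. For closure under extensions I would combine (i) with the associativity of $*$, a standard consequence of the octahedral axiom, to compute
\begin{align*}
(\cF * \Sigma^{-1}\cT) * (\cF * \Sigma^{-1}\cT) &= \cF * \bigl( (\Sigma^{-1}\cT) * \cF \bigr) * (\Sigma^{-1}\cT) \\
&\subseteq \cF * \cF * (\Sigma^{-1}\cT) * (\Sigma^{-1}\cT) \\
&\subseteq \cF * (\Sigma^{-1}\cT),
\end{align*}
where the first inclusion uses (i) and the second uses $\cF * \cF \subseteq \cF$ and $\cT * \cT \subseteq \cT$; these closures hold in $\langle \cS \rangle$ because $(\cT,\cF)$ is a torsion pair, and transfer to $\cC$ via Lemma \ref{lem:TF_closed_under_extensions}.

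There is no serious obstacle: once (i) is in hand, (ii) reduces to a star-product manipulation little harder than checking associativity. The real content is therefore concentrated in part (i), and there the heavy lifting is already done by Lemma \ref{lem:23}; the only work beyond invoking it is to identify $\Ker \gamma$ and $\Coker \gamma$ as objects of $\cF$ and $\cT$, which is standard torsion-pair hygiene.
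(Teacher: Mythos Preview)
Your proof is correct and follows essentially the same route as the paper: for (i) you rotate the defining triangle, apply Lemma~\ref{lem:23} to the resulting morphism $f \to t$ in $\langle \cS \rangle$, and use the closure properties of the torsion pair to place $\Ker$ and $\Coker$ in $\cF$ and $\cT$; for (ii) you invoke Definition~\ref{def:star}(ii) via $\cC(\cF,\Sigma^{-1}\cT)=0$ and then the same $*$-product manipulation using (i), associativity, and Lemma~\ref{lem:TF_closed_under_extensions}. The only cosmetic difference is that the paper concludes (i) by writing $\Sigma e \in (\Sigma\cF)*\cT$ rather than desuspending first.
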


\begin{proof}
(i):  Given $e \in ( \Sigma^{ -1 }\cT ) * \cF$ there is a short triangle $\Sigma^{ -1 }t \xrightarrow{} e \xrightarrow{} f$ with $t \in \cT$, $f \in \cF$, hence a short triangle $f \xrightarrow{ \varphi } t \xrightarrow{} \Sigma e$.  By Lemma \ref{lem:23} this gives a short triangle
\begin{equation}
\label{equ:24_and_25_small_version_2}
  \Sigma \Ker \varphi \xrightarrow{} \Sigma e \xrightarrow{} \Coker \varphi
\end{equation}
where kernel and cokernel are taken in $\langle \cS \rangle$.  But $\cF$ is a torsion free class and $\cT$ a torsion class in $\langle \cS \rangle$, so the subobject $\Ker \varphi$ of $f$ is in $\cF$ and the quotient object $\Coker \varphi$ of $t$ is in $\cT$, whence \eqref{equ:24_and_25_small_version_2} shows $\Sigma e \in ( \Sigma \cF ) * \cT$, that is, $e \in \cF * ( \Sigma^{ -1 }\cT )$ as desired.

(ii):  Definition \ref{def:star}(ii) gives that $\cF * \Sigma^{ -1 }\cT \subseteq \cC$ is an additive subcategory since we have $\cC( \cF,\Sigma^{ -1 }\cT ) = 0$ by Equation \eqref{equ:ConditionEShortforS}.  Closure under extensions can be shown as follows.
\[
  \cF * ( \Sigma^{ -1 }\cT ) * \cF * ( \Sigma^{ -1 }\cT ) \stackrel{ \rm (a) }{ \subseteq }
  \cF * \cF * ( \Sigma^{ -1 }\cT ) * ( \Sigma^{ -1 }\cT ) \stackrel{ \rm (b) }{ \subseteq }
  \cF * ( \Sigma^{ -1 }\cT )
\]
We have used associativity of $*$ which holds by \cite[lem.\ 1.3.10]{BBD}.  The inclusion (a) is by part (i) of the lemma while (b) is by Lemma \ref{lem:TF_closed_under_extensions}.
\end{proof}

\begin{Lemma}
\label{lem:f_to_T}
Assume that $\cT$ is closed under subobjects in $\langle \cS \rangle$ and that $f \in \cF$ is a simple object of $\langle \cS \rangle$.  Then $\cC( f,\cT ) = 0$. 
\end{Lemma}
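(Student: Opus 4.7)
The plan is to show that any morphism $\alpha \colon f \to t$ with $t \in \cT$ vanishes, by exploiting simplicity of $f$ in the abelian category $\langle \cS \rangle$ together with the torsion pair axioms.

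First I would note that since $\langle \cS \rangle$ is a full subcategory of $\cC$, a morphism $f \to t$ in $\cC$ is the same as a morphism in $\langle \cS \rangle$, so it suffices to verify that $\langle \cS \rangle(f,t) = 0$ for every $t \in \cT$. Let $\alpha \colon f \to t$ be such a morphism. Because $f$ is simple in the abelian category $\langle \cS \rangle$, the kernel of $\alpha$ (formed in $\langle \cS \rangle$) is either $0$ or $f$: in the latter case $\alpha = 0$, and we are done.

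In the remaining case $\alpha$ is a monomorphism in $\langle \cS \rangle$, so $f$ is isomorphic to a subobject of $t$. By the hypothesis that $\cT$ is closed under subobjects, this forces $f \in \cT$. But $f$ was assumed to lie in $\cF$, and the defining property of a torsion pair gives $\langle \cS \rangle(\cT,\cF) = 0$; applying this to $\id_f \colon f \to f$ shows $f \cong 0$, contradicting the assumption that $f$ is simple (hence nonzero). Therefore $\alpha = 0$, as desired.

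There is no real obstacle here; the proof is a short assembly of three standard facts, namely that morphisms out of a simple object are zero or monic, that $\cT$ is closed under subobjects, and that $\cT \cap \cF = 0$ for a torsion pair. The only subtlety to keep in mind is that the kernel and the subobject structure must be understood inside the abelian subcategory $\langle \cS \rangle$ supplied by Theorem \ref{thm:16c}(i), rather than taking cones in $\cC$.
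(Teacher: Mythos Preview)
Your proof is correct and follows essentially the same argument as the paper: a nonzero morphism out of the simple object $f$ is monic in $\langle \cS \rangle$, closure of $\cT$ under subobjects forces $f \in \cT$, and this contradicts $f$ being a nonzero object of $\cF$ since $\cT \cap \cF = 0$. The only difference is cosmetic: you spell out the contradiction via $\id_f = 0$, while the paper simply notes that $f \in \cT \cap \cF$ with $f \neq 0$ is impossible.
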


\begin{proof}
Let $t \in \cT$ be given.  A non-zero morphism $f \xrightarrow{} t$ would be monic in $\langle \cS \rangle$ since $f$ is simple in $\langle \cS \rangle$.  Hence $f$ would be a subobject of $t$, but $\cT$ is closed under subobjects, so this would show $f \in \cT$ contradicting that $f$ is a non-zero object of $\cF$.  
\end{proof}

\begin{Lemma}
\label{lem:35}
Assume that $\cT$ is closed under subobjects in $\langle \cS \rangle$ and that
\begin{itemize}
\setlength\itemsep{4pt}

  \item  $f \in \cF$ is a simple object of $\langle \cS \rangle$,
  
  \item  There is a triangle
\begin{equation}
\label{equ:35_100}
  \Sigma^{ -1 }t \xrightarrow{ \tau } f \xrightarrow{ \varphi } g \xrightarrow{} t
\end{equation}
where $\tau$ is a $( \Sigma^{ -1 }\cT )$-cover.

\end{itemize}
Then $g \in \cF$.  
\end{Lemma}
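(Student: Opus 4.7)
The plan is to show that the torsion component of $g$ vanishes. Since $f \in \cF \subseteq \langle \cS \rangle$ and $t \in \cT \subseteq \langle \cS \rangle$, and $\langle \cS \rangle$ is closed under extensions, the short triangle $f \xrightarrow{\varphi} g \xrightarrow{\psi} t$ gives $g \in \langle \cS \rangle$; by Theorem \ref{thm:16c}(i) it is a short exact sequence $0 \to f \to g \to t \to 0$ in the abelian category $\langle \cS \rangle$. Let $0 \to t_g \to g \to f_g \to 0$ be the torsion pair decomposition of $g$; the goal is to show $t_g = 0$.

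First I would show that the composite $\kappa \colon t_g \hookrightarrow g \xrightarrow{\psi} t$ is monic in $\langle \cS \rangle$. Its kernel is a subobject of $t_g$, hence lies in $\cT$ by the hypothesis that $\cT$ is closed under subobjects, and it also embeds into $\ker \psi = f$, which is simple. The intersection $\cT \cap \cF = 0$ (with $f \in \cF$) forces $\ker \kappa = 0$. Let $t'' = \Image \kappa$ with inclusion $\iota \colon t'' \hookrightarrow t$ and cokernel $p \colon t \to q = t/t''$. Because $\iota$ lifts to $g$ via $t_g \hookrightarrow g$, applying $\cC(t'', -)$ to the rotated triangle $f \xrightarrow{\varphi} g \xrightarrow{\psi} t \xrightarrow{-\Sigma \tau} \Sigma f$ places $\iota$ in the image of $\psi_*$, hence in the kernel of composition with $-\Sigma \tau$; that is, $\tau \circ \Sigma^{-1} \iota = 0$. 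Exactness along the triangle $\Sigma^{-1} t'' \xrightarrow{\Sigma^{-1} \iota} \Sigma^{-1} t \xrightarrow{\Sigma^{-1} p} \Sigma^{-1} q \to t''$ associated to $0 \to t'' \to t \to q \to 0$ then produces $\mu \colon \Sigma^{-1} q \to f$ with $\tau = \mu \circ \Sigma^{-1} p$. Since $\cT$ is a torsion class, $q \in \cT$ as a quotient of $t \in \cT$, so $\Sigma^{-1} q \in \Sigma^{-1} \cT$ and the cover property of $\tau$ supplies $\nu \colon \Sigma^{-1} q \to \Sigma^{-1} t$ with $\mu = \tau \nu$. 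Combining, $\tau = \tau \circ (\nu \circ \Sigma^{-1} p)$.

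The key step, and the main obstacle, is to exploit the \emph{right-minimality} built into the definition of a cover: this forces $\nu \circ \Sigma^{-1} p$ to be an automorphism of $\Sigma^{-1} t$. Consequently $\Sigma^{-1} p$, and thus $p$, is split monic; combined with the fact that $p$ is epic in $\langle \cS \rangle$ (being a quotient map), this makes $p$ an isomorphism, so $t'' = \ker p = 0$. Hence $t_g \cong t'' = 0$ and $g \cong f_g \in \cF$. The delicate point is that the conclusion would fail with $\tau$ only a right $(\Sigma^{-1} \cT)$-approximation in place of a cover; the two successive factorisations collapse into something useful precisely because covers are right-minimal.
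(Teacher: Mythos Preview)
Your proof is correct and shares the paper's overall architecture: take the torsion decomposition $t_g \hookrightarrow g \twoheadrightarrow f_g$, produce a factorisation of $\tau$ through $\Sigma^{-1}$ of a quotient of $t$ lying in $\cT$, and then invoke right-minimality of the cover to force the torsion part to vanish. The key mechanism is identical.

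Where you differ from the paper is in how the factorisation is set up. The paper composes the triangle \eqref{equ:35_100} with the torsion triangle via the octahedral axiom to obtain a large commutative diagram, and must then appeal to \cite[lem.\ 2.6]{D} (using that $f$ is simple, hence in $\cS$, and that $\sigma\varphi \neq 0$) to show that the resulting cone $c$ lies in $\langle \cS \rangle$, so that $t' \to t \to c$ is a short exact sequence with $c \in \cT$. You bypass this: you first prove directly that $t_g \to t$ is monic in $\langle \cS \rangle$ (using closure of $\cT$ under subobjects and $\cT \cap \cF = 0$), and then simply take the cokernel $q$ inside $\langle \cS \rangle$, which is automatically in $\cT$ as a quotient of $t$. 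This avoids both the octahedral axiom and the Dugas lemma, making the argument slightly more elementary; the paper's route, on the other hand, packages all the relations into one diagram, which some readers may find more transparent.
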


\begin{proof}
The triangle \eqref{equ:35_100} implies $g \in \langle \cS \rangle$ since $\langle \cS \rangle$ is closed under extensions.  The torsion pair $( \cT,\cF )$ provides a short exact sequence $t' \xrightarrow{ \iota } g \xrightarrow{ \sigma } f'$ in $\langle \cS \rangle$ with $t' \in \cT$, $f' \in \cF$, hence a triangle 
\begin{equation}
\label{equ:35_2}
  t' \xrightarrow{ \iota } g \xrightarrow{ \sigma } f' \xrightarrow{} \Sigma t'.
\end{equation}
Since $g \in \langle \cS \rangle$, the short triangle $f \xrightarrow{ \varphi } g \xrightarrow{} t$ arising from \eqref{equ:35_100} is a short exact sequence in $\langle \cS \rangle$ whence $\varphi \neq 0$ because $f \neq 0$.  From $\varphi \neq 0$ follows $\sigma\varphi \neq 0$ since if $\sigma\varphi = 0$ then we could factorise $f \xrightarrow{ \varphi } g$ as $f \xrightarrow{ \theta } t' \xrightarrow{ \iota } g$, but $\theta = 0$ by Lemma \ref{lem:f_to_T}.

By the octahedral axiom the triangles \eqref{equ:35_100} and \eqref{equ:35_2} can be combined to the following commutative diagram where each row and column is a triangle.
\[
  \xymatrix @+0.5pc {
    \Sigma^{ -1 }t' \ar[r] \ar_{ \mu }[d] & 0 \ar[r] \ar[d] & t' \ar@{=}[r] \ar^{ \iota }[d] & t' \ar^{ \Sigma \mu }[d] \\
    \Sigma^{ -1 }t \ar^{ \tau }[r] \ar_{ \pi }[d] & f \ar^{ \varphi }[r] \ar@{=}[d] & g \ar[r] \ar^{ \sigma }[d] & t \ar[d] \\
    \Sigma^{ -1 }c \ar_{ \widetilde{ \tau } }[r] \ar[d] & f \ar_{ \sigma\varphi }[r] \ar[d] & f' \ar[r] \ar[d] & c \ar[d] \\
    t' \ar[r] & 0 \ar[r] & \Sigma t' \ar@{=}[r] & \Sigma t' \\
                    }
\]
Since $f$ is simple in $\langle \cS \rangle$ we have $f \in \cS$ up to isomorphism by Theorem \ref{thm:16c}(ii).  Since $f' \in \langle \cS \rangle$ and $\sigma\varphi \neq 0$, the dual of \cite[lem.\ 2.6]{D} hence shows $c \in \langle \cS \rangle$.  The last column of the diagram therefore gives a short exact sequence
\begin{equation}
\label{equ:35_1}
  t' \xrightarrow{ \Sigma \mu } t \xrightarrow{} c
\end{equation}
in $\langle \cS \rangle$ whence $c \in \cT$ since $t \in \cT$ while $\cT$ is closed under quotients because it is a torsion class. 

It follows that $\Sigma^{ -1 }c \in \Sigma^{ -1 }\cT$ so the morphism $\Sigma^{ -1 }c \xrightarrow{ \widetilde{ \tau } } f$ can be factorised as $\Sigma^{ -1 }c \xrightarrow{ \rho } \Sigma^{ -1 }t \xrightarrow{ \tau } f$ through the $( \Sigma^{ -1 }\cT )$-cover $\tau$.  Hence $\tau\rho\pi = \widetilde{\tau}\pi = \tau$, and this shows that $\rho\pi$ is an automorphism because $\tau$ is right minimal since it is a cover.  But then $\rho\pi \circ \mu = \rho \circ \pi\mu = \rho \circ 0 = 0$ implies $\mu = 0$.  Hence $\Sigma \mu = 0$ so the short exact sequence \eqref{equ:35_1} implies $t' \cong 0$ whence the triangle \eqref{equ:35_2} implies $g \cong f'$.  But $f' \in \cF$ so $g \in \cF$ as claimed.  
\end{proof}

\begin{Lemma}
\label{lem:36}
Assume that $\cT$ is closed under subobjects in $\langle \cS \rangle$ and that
\begin{itemize}
\setlength\itemsep{4pt}

  \item  $f_1, f_2 \in \cF$ are simple objects of $\langle \cS \rangle$,
  
  \item  There are triangles
\[
  \Sigma^{ -1 }t_i \xrightarrow{ \tau_i } f_i \xrightarrow{ \varphi_i } g_i \xrightarrow{} t_i
\]
for $i \in \{ 1,2 \}$ where the $\tau_i$ are $( \Sigma^{ -1 }\cT )$-covers.

\end{itemize}
Then there is an isomorphism $\cC( g_1,g_2 ) \xrightarrow{ \Psi } \cC( f_1,f_2 )$, which respects compositions in the case where $f_1 = f_2$ and the triangles are identical.
\end{Lemma}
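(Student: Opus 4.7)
The plan is to construct $\Psi$ as a composite of two isomorphisms obtained by applying Hom functors to the two defining triangles.

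First, I would apply the cohomological functor $\cC( f_1,- )$ to the triangle $\Sigma^{ -1 }t_2 \xrightarrow{ \tau_2 } f_2 \xrightarrow{ \varphi_2 } g_2 \xrightarrow{} t_2$. In the resulting long exact sequence the term $\cC( f_1,\Sigma^{ -1 }t_2 )$ vanishes by Equation \eqref{equ:ConditionEShortforS}, and the term $\cC( f_1,t_2 )$ vanishes by Lemma \ref{lem:f_to_T} since $f_1$ is a simple object of $\langle \cS \rangle$ lying in $\cF$ while $t_2 \in \cT$. Hence $\cC( f_1,\varphi_2 ) : \cC( f_1,f_2 ) \to \cC( f_1,g_2 )$ is an isomorphism.

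Second, I would apply $\cC( -,g_2 )$ to the triangle $\Sigma^{ -1 }t_1 \xrightarrow{ \tau_1 } f_1 \xrightarrow{ \varphi_1 } g_1 \xrightarrow{} t_1$. Injectivity of $\cC( \varphi_1,g_2 )$ follows from $\cC( t_1,g_2 ) = 0$, which holds because $t_1 \in \cT$, $g_2 \in \cF$ (by Lemma \ref{lem:35}), and $( \cT,\cF )$ is a torsion pair in $\langle \cS \rangle$. For surjectivity the task is to show that the connecting map $\cC( \tau_1,g_2 ) : \cC( f_1,g_2 ) \to \cC( \Sigma^{ -1 }t_1,g_2 )$, which is precomposition with $\tau_1$, is zero. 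Given $\alpha \in \cC( f_1,g_2 )$, the previous paragraph produces a unique $\beta \in \cC( f_1,f_2 )$ with $\alpha = \varphi_2\beta$. Then $\beta\tau_1 : \Sigma^{ -1 }t_1 \to f_2$ has source in $\Sigma^{ -1 }\cT$, so by the cover property of $\tau_2$ it factors as $\tau_2\gamma$ for some $\gamma$, giving $\alpha\tau_1 = \varphi_2\tau_2\gamma = 0$ since $\varphi_2\tau_2 = 0$. This is the key step of the proof and the only place where the cover hypothesis is used.

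The isomorphism is then defined as the composite $\Psi = \cC( f_1,\varphi_2 )^{ -1 } \circ \cC( \varphi_1,g_2 )$. For the composition-respecting property in the case $f_1 = f_2 = f$ with identical triangles, $\Psi( \alpha )$ is characterised as the unique endomorphism $\beta$ of $f$ with $\varphi\beta = \alpha\varphi$. If $\beta_i = \Psi( \alpha_i )$ for $i \in \{ 1,2 \}$, then $\varphi\beta_1\beta_2 = \alpha_1\varphi\beta_2 = \alpha_1\alpha_2\varphi$, so uniqueness forces $\Psi( \alpha_1\alpha_2 ) = \beta_1\beta_2 = \Psi( \alpha_1 )\Psi( \alpha_2 )$, as required.
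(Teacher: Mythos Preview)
Your proof is correct and structurally identical to the paper's: both establish that $(\varphi_2)_* : \cC(f_1,f_2) \to \cC(f_1,g_2)$ and $\varphi_1^* : \cC(g_1,g_2) \to \cC(f_1,g_2)$ are isomorphisms, then define $\Psi$ via the resulting commutative square. The one genuine difference lies in how surjectivity of $\varphi_1^*$ is obtained. The paper shows the stronger fact $\cC(\Sigma^{-1}t_1, g_2) = 0$ by invoking the triangulated Wakamatsu Lemma (using that $g_2$ is the cone of a $(\Sigma^{-1}\cT)$-cover and that $\Sigma^{-1}\cT$ is extension closed by Lemma~\ref{lem:TF_closed_under_extensions}). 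You instead show only that the map $\tau_1^* : \cC(f_1,g_2) \to \cC(\Sigma^{-1}t_1,g_2)$ vanishes, by lifting through the first isomorphism and then factoring through the precover $\tau_2$. Your route is slightly more self-contained---it needs only the precover property of $\tau_2$ and avoids both Wakamatsu and the extension-closure lemma---while the paper's route yields a marginally stronger vanishing statement. One small inaccuracy: your remark that this is ``the only place where the cover hypothesis is used'' overlooks that your appeal to Lemma~\ref{lem:35} for $g_2 \in \cF$ also relies on $\tau_2$ being a cover.
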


\begin{proof}
The mode of proof is due to \cite[proof of thm.\ 6.2]{D2}.  The two triangles in the lemma combine to the following diagram with an exact row and an exact column.
\[
  \xymatrix @+0.5pc {
    & & \cC( t_1,g_2 ) \ar[d] & \\
    & & \cC( g_1,g_2 ) \ar^{ \varphi_1^* }[d] & \\
    \cC( f_1,\Sigma^{ -1 }t_2 ) \ar_-{ (\tau_2)_* }[r] & \cC( f_1,f_2 ) \ar_{ (\varphi_2)_* }[r] & \cC( f_1,g_2 ) \ar[r] \ar^{ \tau_1^* }[d] & \cC( f_1,t_2 ) \\
    & & \cC( \Sigma^{ -1 }t_1,g_2 ) & \\
                    }
\]
We show that the four end terms are zero.  
\begin{itemize}
\setlength\itemsep{4pt}

  \item  $\cC( f_1,\Sigma^{ -1 }t_2 ) = 0$ holds by Equation \eqref{equ:ConditionEShortforS}.
  
  \item  $\cC( f_1,t_2 ) = 0$ by Lemma \ref{lem:f_to_T}.
  
  \item  $\cC( t_1,g_2 ) = 0$ since $t_1 \in \cT$ by assumption while $g_2 \in \cF$ by Lemma \ref{lem:35}.
  
  \item  $\cC( \Sigma^{ -1 }t_1,g_2 ) = 0$ by the triangulated Wakamatsu Lemma, \cite[lem.\ 2.1]{JorARSub}, since $g_2$ is the mapping cone of a $( \Sigma^{ -1 }\cT )$-cover while $\Sigma^{ -1 }\cT \subseteq \cC$ is closed under extensions by Lemma \ref{lem:TF_closed_under_extensions}.

\end{itemize}
The diagram hence shows that $\varphi_1^*$ and $( \varphi_2 )_*$ are bijective.  This means that in the square
\[
  \xymatrix @+0.5pc {
    f_1 \ar^{ \varphi_1 }[r] \ar_{ \psi }[d] & g_1 \ar^{ \gamma }[d] \\
    f_2 \ar_{ \varphi_2 }[r] & g_2 \lefteqn{,} \\                    }
\]
if one of the vertical morphisms is given, then there is a unique choice of the other making the square commutative.  This implies that $\Psi( \gamma ) = \psi$ defines a map $\Psi$ with the properties claimed in the lemma.
\end{proof}

For the next proof we remind the reader that by Theorem \ref{thm:16c}(ii)+(iii), the objects of $\cS$ are simple in $\langle \cS \rangle$ and each object of $\langle \cS \rangle$ has finite length.

\begin{Lemma}
\label{lem:S_prime}
Let $\cS' \subseteq \cS$ be a subset.  Then $\langle \cS' \rangle \subseteq \langle \cS \rangle$ is the additive subcategory whose objects are those with all composition factors in $\cS'$.
\end{Lemma}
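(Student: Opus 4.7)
My plan is to prove the two inclusions of the claimed equality separately, using the fact that $\langle \cS \rangle$ is a proper abelian subcategory of $\cC$ (Theorem \ref{thm:16c}(i)), that its simple objects are, up to isomorphism, exactly the objects of $\cS$ (Theorem \ref{thm:16c}(ii)), and that every object of $\langle \cS \rangle$ has finite length (Theorem \ref{thm:16c}(iii)). Throughout, I will freely use that short triangles between objects of $\langle \cS \rangle$ are precisely the short exact sequences in the abelian category $\langle \cS \rangle$.

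For the inclusion $\langle \cS' \rangle \subseteq \{a \in \langle \cS \rangle : \text{all composition factors of } a \text{ lie in } \cS'\}$, I would argue by induction on $n$ for $a \in ( \cS' )_n$, using Definition \ref{def:extension_closure}. The case $n = 0$ is trivial. For the inductive step, one writes a defining short triangle $a' \to a \to s$ with $a' \in ( \cS' )_{n-1}$ and $s \in \cS' \cup \{0\}$; this is a short exact sequence in $\langle \cS \rangle$ by Theorem \ref{thm:16c}(i), so the composition factors of $a$ are those of $a'$ together with (possibly) $s$, and all of them lie in $\cS'$ by the induction hypothesis.

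For the reverse inclusion, I would induct on the length of $a \in \langle \cS \rangle$ (which is finite by Theorem \ref{thm:16c}(iii)). Length $0$ means $a = 0 \in \langle \cS' \rangle$. For positive length, pick a composition series and extract a short exact sequence $a' \hookrightarrow a \twoheadrightarrow s$ in $\langle \cS \rangle$ with $s$ simple. By hypothesis $s$ is a composition factor of $a$, hence $s \in \cS'$ up to isomorphism (using Theorem \ref{thm:16c}(ii) to identify simples). The subobject $a'$ has strictly smaller length, and its composition factors are a subset of those of $a$, hence all lie in $\cS'$; by induction $a' \in \langle \cS' \rangle$. The short exact sequence is a short triangle by Theorem \ref{thm:16c}(i), so $a \in \langle \cS' \rangle * \langle \cS' \rangle \subseteq \langle \cS' \rangle$ since $\langle \cS' \rangle$ is closed under extensions by Definition \ref{def:extension_closure}.

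Finally, the right-hand side is obviously closed under isomorphisms, direct sums, and direct summands in $\langle \cS \rangle$, so it is an additive subcategory, matching the description in the statement. The only step requiring any care is the identification of the simple subquotient $s$ in the second inductive step with an element of $\cS'$ rather than just with some element of $\cS$; this is resolved by invoking Theorem \ref{thm:16c}(ii) together with the hypothesis on composition factors, so no genuine obstacle arises.
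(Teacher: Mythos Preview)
Your proof is correct and follows essentially the same strategy as the paper's: both directions are established by the same inductions (on the filtration index $(\cS')_n$ for one inclusion, on length for the other), using that short exact sequences in $\langle\cS\rangle$ coincide with short triangles. The only cosmetic difference is that for the inclusion $\langle\cS'\rangle\subseteq\cV$ the paper invokes Lemma~\ref{lem:TF_closed_under_extensions} to note that $\cV$ is extension closed in $\cC$ and then appeals to the minimality of $\langle\cS'\rangle$, whereas you unroll this into an explicit induction on $(\cS')_n$; the content is the same.
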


\begin{proof}
Let $\cV \subseteq \langle \cS \rangle$ be the additive subcategory whose objects are those with all composition factors in $\cS'$.  Lemma \ref{lem:TF_closed_under_extensions} implies that $\cV \subseteq \cC$ is closed under extensions, and it is clear that $\cS' \subseteq \cV$, so we have $\langle \cS' \rangle \subseteq \cV$.  To prove the opposite inclusion $\cV \subseteq \langle \cS' \rangle$, we can assume that $v \in \cV$ has length $n$ for a non-negative integer $n$ and will prove $v \in \langle \cS' \rangle$ by induction on $n$, noting that if $n=0$ then $v=0$ is in $\langle \cS' \rangle$.  Now assume that each $v \in \cV$ of length $n-1$ satisfies $v \in \langle \cS' \rangle$ and let $v \in \cV$ of length $n$ be given.  There is a short exact sequence $v' \xrightarrow{} v \xrightarrow{} s'$ with $v' \in \cV$ of length $n-1$ and $s' \in \cS'$.  This is a short triangle where $v' \in \langle \cS' \rangle$ by induction so $v \in \langle \cS' \rangle$ follows.  
\end{proof}

\section{Left tilting of $w$-orthogonal collections}
\label{sec:left_tilting}

This section proves Theorem \ref{thm:37}, which implies the first part of Theorem \ref{thm:C} from the introduction.

\begin{Setup}
\label{set:left_tilting}
In this section the following are fixed.
\begin{enumerate}
\setlength\itemsep{4pt}

  \item  $w \geqslant 2$ is an integer.
  
  \item  $k$ is still a field, $\cC$ an essentially small $k$-linear $\Hom$-finite triangulated category with split idempotents, and we now also assume that $\cC$ is $(-w)$-Calabi--Yau.

  \item  $\cS = \{ s_1, \ldots, s_e \}$ is a fixed $w$-orthogonal collection in $\cC$ and $\cS' \subseteq \cS$ is a subset.  We have the extension closures $\langle \cS' \rangle \subseteq \langle \cS \rangle$. 

  \item  Each $s \in \cS \setminus \cS'$ is assumed to have a $( \Sigma^{ -1 }\langle \cS' \rangle )$-cover.

\end{enumerate}
\end{Setup}

\begin{Remark}
The results of Sections \ref{sec:abelian} and \ref{sec:torsion} apply to $\langle \cS \rangle$, and Setup \ref{set:left_tilting}(iv) implies that the left tilt $L_{ \cS' }( \cS )$ is defined, see Definition \ref{def:tilting}(i).
\end{Remark}

\begin{Theorem}
\label{thm:37}
The left tilt $L_{ \cS' }( \cS )$ and the classes $\cT = \langle \cS' \rangle$, $\cF = \langle \cS' \rangle^{ \perp } \cap \langle \cS \rangle$ have the following properties.\begin{enumerate}
\setlength\itemsep{4pt}

  \item  $L_{ \cS' }( \cS )$ is a $w$-orthogonal collection.

  \item  If $\cS$ is a $w$-simple minded system, then so is $L_{ \cS' }( \cS )$.

  \item  $( \cT,\cF )$ is a torsion pair in the abelian category $\langle \cS \rangle$ and $\cT$ is closed under subobjects in $\langle \cS \rangle$.

  \item  $( \cF,\Sigma^{-1}\cT )$ is a torsion pair in the abelian category $\langle L_{ \cS' }( \cS ) \rangle$ and $\Sigma^{-1}\cT$ is closed under quotient objects in $\langle L_{ \cS' }( \cS ) \rangle$.

\end{enumerate}
\end{Theorem}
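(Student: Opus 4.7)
My plan is to prove the four parts in the order (iii), (i), (iv), (ii). Part (iii) is essentially formal: by Lemma \ref{lem:S_prime}, $\cT = \langle \cS' \rangle$ consists of those objects of $\langle \cS \rangle$ whose composition factors all lie in $\cS'$, making $\cT$ a Serre subcategory of the length abelian category $\langle \cS \rangle$ (using Theorem \ref{thm:16c}(iii)), hence closed under subobjects, quotients, and extensions. Standard torsion theory then yields the torsion pair $(\cT, \cF)$ with $\cF = \cT^{\perp} \cap \langle \cS \rangle$ and $\cT$ closed under subobjects.

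For (i), I verify the two $w$-orthogonality conditions for $L_{\cS'}(\cS)$ by case analysis on whether each of $s_i, s_j$ lies in $\cS'$ or $\cS \setminus \cS'$. The case $s_i, s_j \in \cS'$ is transparent since $L_{\cS'}$ acts there by $\Sigma^{-1}$. In the remaining cases, I apply the functors $\cC(L_{\cS'}(s_i), \Sigma^\ell -)$ and $\cC(-, \Sigma^\ell L_{\cS'}(s_j))$ to triangle \eqref{equ:L_triangle} and then control the auxiliary terms using: (a) a d\'evissage argument upgrading the defining orthogonality $\cC(\cS, \Sigma^\ell \cS) = 0$ to $\cC(\langle \cS \rangle, \Sigma^\ell \langle \cS \rangle) = 0$ for $\ell \in \{-w+1, \ldots, -1\}$; (b) the Wakamatsu Lemma, as invoked inside the proof of Lemma \ref{lem:36}, which yields $\cC(\Sigma^{-1}\langle \cS' \rangle, L_{\cS'}(s)) = 0$ for $s \in \cS \setminus \cS'$; and (c) the Calabi--Yau duality coming from the $(-w)$-Calabi--Yau hypothesis, which is self-symmetric on the shift range $\ell \in \{-w+1,\ldots,-1\}$ via the involution $\ell \mapsto -w-\ell$ and reduces one of the mixed cases to the other. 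I expect this step to be the main technical obstacle: Lemma \ref{lem:36} supplies the needed hom-space isomorphism only at $\ell = 0$, and extending it to the full shifted range requires careful bookkeeping across the Calabi--Yau pairing.

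With (i) in hand, Theorem \ref{thm:16c}(i) gives that $\langle L_{\cS'}(\cS) \rangle$ is a proper abelian subcategory, enabling (iv). The orthogonality $\cC(\cF, \Sigma^{-1}\cT) \subseteq \cC(\langle \cS \rangle, \Sigma^{-1}\langle \cS \rangle) = 0$ is immediate from Equation \eqref{equ:ConditionEShortforS}; the inclusion $\Sigma^{-1}\cT = \langle \Sigma^{-1}\cS' \rangle \subseteq \langle L_{\cS'}(\cS)\rangle$ is automatic; and $\cF \subseteq \langle L_{\cS'}(\cS)\rangle$ follows by induction on length, using that each simple $s \in \cF \cap \cS = \cS \setminus \cS'$ sits in the short triangle $\Sigma^{-1}t_s \to s \to L_{\cS'}(s)$ from \eqref{equ:L_triangle} with both outer terms in $\langle L_{\cS'}(\cS)\rangle$. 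To show $\langle L_{\cS'}(\cS)\rangle = \cF * \Sigma^{-1}\cT$, the inclusion $\supseteq$ is trivial; for $\subseteq$, I induct on length in $\langle L_{\cS'}(\cS)\rangle$ and note that every simple lies in $\cF \cup \Sigma^{-1}\cT \subseteq \cF * \Sigma^{-1}\cT$, while extending a $\cF * \Sigma^{-1}\cT$-filtration by one new simple factor uses Lemma \ref{lem:24_and_25_small_version}(i) to shuffle a $\Sigma^{-1}\cT$-term past a $\cF$-term. Closure of $\Sigma^{-1}\cT$ under quotients in $\langle L_{\cS'}(\cS)\rangle$ follows from its Serre property there, via Lemma \ref{lem:S_prime} applied to $\Sigma^{-1}\cS' \subseteq L_{\cS'}(\cS)$. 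Finally for (ii), substituting $\langle \cS \rangle = \cT * \cF$ into $\cC = \langle \cS \rangle * \cdots * \Sigma^{-w+1}\langle \cS \rangle$ and regrouping adjacent $\cF * \Sigma^{-1}\cT$ blocks yields $\cC = \cT * \langle L_{\cS'}(\cS)\rangle * \cdots * \Sigma^{-w+2}\langle L_{\cS'}(\cS)\rangle * \Sigma^{-w+1}\cF$. The trailing $\Sigma^{-w+1}\cF$ absorbs into $\Sigma^{-w+1}\langle L_{\cS'}(\cS)\rangle$, while the leading $\cT$-factor (not contained in $\langle L_{\cS'}(\cS)\rangle$) must be eliminated via the $(-w)$-Calabi--Yau periodicity, which relates $\cT$ at shift $0$ to its Serre translate $\Sigma^{-w}\cT$ sitting at the bottom of $\Sigma^{-w+1}\langle L_{\cS'}(\cS)\rangle$.
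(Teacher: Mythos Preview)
Your overall order (iii), (i), (iv), (ii) and the broad strategy match the paper's, and your tools (a) d\'evissage, (b) Wakamatsu, and (c) Calabi--Yau duality are all genuinely used there. However, one key technical ingredient is missing from your list, and its absence creates gaps in parts (i) and (iv).

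The missing fact is that $L_{\cS'}(s) \in \cF$ for each $s \in \cS \setminus \cS'$; this is Lemma~\ref{lem:35} in the paper (recorded as Lemma~\ref{lem:sec_5_after}(iii)). In part (i), your tools (a)--(c) do not cover the boundary shift in the mixed cases: for $s_i \in \cS'$, $s_j \in \cS \setminus \cS'$ at $\ell = -1$ you need $\cC(s_i, L_{\cS'}(s_j)) = 0$, and for the other mixed case at $\ell = -w+1$ the Calabi--Yau reduction lands on the same vanishing. Applying $\cC(s_i,-)$ to triangle~\eqref{equ:L_triangle} only yields an injection $\cC(s_i, L_{\cS'}(s_j)) \hookrightarrow \cC(s_i, t_{s_j})$, and the target is typically nonzero since $s_i, t_{s_j} \in \cT$. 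The paper instead proves $L_{\cS'}(s_j) \in \cF$ and then uses $\cC(\cT,\cF) = 0$. The same fact is needed in part (iv) for your claim that every simple of $\langle L_{\cS'}(\cS)\rangle$ lies in $\cF \cup \Sigma^{-1}\cT$.

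Separately, your induction for $\cF \subseteq \langle L_{\cS'}(\cS)\rangle$ is incomplete. You write that the simples of $\cF$ are $\cF \cap \cS = \cS \setminus \cS'$, which is true, but an object $f \in \cF$ can have composition factors in $\cS'$ (torsion-freeness only forbids $\cT$-subobjects, not $\cS'$-subquotients). So in the induction step $f' \hookrightarrow f \twoheadrightarrow u$ with $u$ simple, the case $u \in \cS'$ genuinely occurs and your argument does not treat it. The paper handles this case by observing that the sequence cannot split (else $u \in \cT$ would be a summand of $f \in \cF$), so the associated triangle $\Sigma^{-1}u \xrightarrow{\varphi} f' \to f \to u$ has $\varphi \neq 0$; since $\Sigma^{-1}u = L_{\cS'}(u)$ is a simple of $\langle L_{\cS'}(\cS)\rangle$ and $f' \in \langle L_{\cS'}(\cS)\rangle$ by induction, the dual of \cite[lem.~2.6]{D} forces $f \in \langle L_{\cS'}(\cS)\rangle$.

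Finally, your sketch for part~(ii) has the right intuition but ``eliminate the leading $\cT$ via Calabi--Yau periodicity'' is not yet a mechanism. The paper's device is: for $c \in \cC$, take a $(\Sigma^{-w}\cT)$-envelope $c \to \Sigma^{-w}t$ with fibre $c'$; Wakamatsu gives $\cC(c',\Sigma^{-w}\cT)=0$, Calabi--Yau converts this to $\cC(\cT,c')=0$, so in the decomposition $c' \in \cT * \cY$ the $\cT$-part splits off and vanishes by summand-closure of $\cY$, whence $c \in \cY * \Sigma^{-w}\cT$.
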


The four parts of the theorem will be proved separately.

\begin{proof}[Proof of Theorem \ref{thm:37}(iii)]  Lemma \ref{lem:S_prime} says that $\cT \subseteq \langle \cS \rangle$ is the additive subcategory of objects with composition factors isomorphic to objects in $\cS'$.  In particular, it is closed under subobjects.  It is also a torsion class because each object of $\langle \cS \rangle$ has finite length by Theorem \ref{thm:16c}(iii).  The corresponding torsion free class is the right perpendicular of $\cT$ taken in $\langle \cS \rangle$, which is precisely $\cF$.  
\end{proof}

\begin{Lemma}
\label{lem:sec_5_after}
The classes $\cT$ and $\cF$ from Theorem \ref{thm:37} satisfy the following.
\begin{enumerate}
\setlength\itemsep{4pt}

  \item  If $s \in \cS'$ then $s \in \cT$.
  
  \item  If $s \in \cS \setminus \cS'$ then $s \in \cF$. 
  
  \item  If $s \in \cS \setminus \cS'$ then $L_{ \cS' }( s ) \in \cF$.

\end{enumerate}
\end{Lemma}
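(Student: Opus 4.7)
The plan is to handle the three parts separately, with parts (i) and (ii) being essentially formal consequences of the definitions and the $w$-orthogonality, while part (iii) reduces to the previously proven Lemma \ref{lem:35}.

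Part (i) is immediate: by construction of the extension closure, $\cS' \subseteq \langle \cS' \rangle = \cT$, so $s \in \cS'$ gives $s \in \cT$.

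For part (ii), I would first note that $s \in \langle \cS \rangle$ holds trivially, so what remains is to show $\cC( \langle \cS' \rangle, s ) = 0$. The base case is supplied by the orthogonality condition in Definition \ref{def:SMS}(i): since $s \in \cS \setminus \cS'$, every $s' \in \cS'$ is distinct from $s$, giving $\cC( s', s ) = 0$. I would then propagate this vanishing to all of $\langle \cS' \rangle$ by induction on the extension level: for a defining triangle $t' \to t \to s'' \to \Sigma t'$ with $s'' \in \cS' \cup \{ 0 \}$ and $t' \in ( \cS' )_{ n-1 }$, the long exact sequence obtained by applying $\cC( -, s )$ sandwiches $\cC( t, s )$ between $\cC( s'', s ) = 0$ and $\cC( t', s ) = 0$ (the latter by induction), forcing $\cC( t, s ) = 0$.

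Part (iii) is the step where the minimality and approximation properties of the cover $\tau_s$ matter, but that content has already been packaged into Lemma \ref{lem:35}. I would apply Lemma \ref{lem:35} with $f := s$, $t := t_s$, $g := L_{\cS'}( s )$, and $\tau := \tau_s$, and verify its hypotheses one by one: $\cT = \langle \cS' \rangle$ is closed under subobjects in $\langle \cS \rangle$ by Theorem \ref{thm:37}(iii), which is proven just above; $s$ is simple in $\langle \cS \rangle$ by Theorem \ref{thm:16c}(ii); $s$ lies in $\cF$ by part (ii) of the present lemma; and the defining triangle $\Sigma^{-1} t_s \xrightarrow{\tau_s} s \to L_{\cS'}( s ) \to t_s$ exhibits $\tau_s$ as a $( \Sigma^{-1} \langle \cS' \rangle )$-cover by Setup \ref{set:left_tilting}(iv) together with Definition \ref{def:tilting}(i). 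Lemma \ref{lem:35} then directly yields $L_{\cS'}( s ) \in \cF$.

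The real content of part (iii), namely that the cone of a cover of a simple object in $\cF$ remains in $\cF$, is concentrated in Lemma \ref{lem:35}; at this stage no substantial obstacle arises, and the proof reduces to bookkeeping once the right hypotheses are collected and applied.
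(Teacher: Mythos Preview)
Your proof is correct and follows essentially the same route as the paper: part (i) is immediate from $\cT=\langle\cS'\rangle$, part (ii) reduces to $\cC(\langle\cS'\rangle,s)=0$ via Definition \ref{def:SMS}(i) (the paper simply says ``whence'' rather than spelling out the induction), and part (iii) is obtained by applying Lemma \ref{lem:35} after checking its hypotheses using Theorem \ref{thm:16c}(ii), Theorem \ref{thm:37}(iii), and part (ii).
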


\begin{proof}
(i):  This holds since $\cT = \langle \cS' \rangle$.  

(ii):  If $s \in \cS \setminus \cS'$ then $\cC( \cS',s ) = 0$ by Definition \ref{def:SMS}(i) whence $\cC( \langle \cS' \rangle,s ) = 0$, so $s \in \langle \cS' \rangle^{ \perp } \cap \langle \cS \rangle = \cF$.  

(iii):  Theorem \ref{thm:16c}(ii), Theorem \ref{thm:37}(iii), and part (ii) imply that Lemma \ref{lem:35} can applied to the triangle \eqref{equ:L_triangle} to give part (iii).
\end{proof}

\begin{proof}[Proof of Theorem \ref{thm:37}(i)]
We will split into four cases to show that $L_{ \cS' }( \cS ) = \{ L_{ \cS' }( s_1 ), \ldots, L_{ \cS' }( s_e ) \}$ satisfies Definition \ref{def:SMS}(i)+(ii), using that by assumption, so does $\cS$.
\begin{itemize}
\setlength\itemsep{4pt}

  \item  If $s_i,s_j \in \cS'$ then it is sufficient to note that
\[
  \cC\big( L_{ \cS' }( s_i ),\Sigma^{ \ell }L_{ \cS' }( s_j ) \big)
  = \cC( \Sigma^{ -1 }s_i,\Sigma^{ \ell }\Sigma^{ -1 }s_j )
  \cong \cC( s_i,\Sigma^{ \ell }s_j ).
\]

  \item  If $s_i,s_j \in \cS \setminus \cS'$ then Theorem \ref{thm:16c}(ii), Theorem \ref{thm:37}(iii), and Lemma \ref{lem:sec_5_after}(ii) imply that Lemma \ref{lem:36} can applied to the triangle \eqref{equ:L_triangle} to get
\[
  \cC\big( L_{ \cS' }( s_i ),L_{ \cS' }( s_j ) \big)
  \cong \cC( s_i,s_j ).
\]
This is sufficient for Definition \ref{def:SMS}(i).
For Definition \ref{def:SMS}(ii) note that $L_{ \cS' }( s_i ), L_{ \cS' }( s_j ) \in \cF \subseteq \langle \cS \rangle$ by Lemma \ref{lem:sec_5_after}(iii), so if $\ell \in \{ -w+1, \ldots, -1 \}$ then
\[
  \cC\big( L_{ \cS' }( s_i ),\Sigma^{ \ell }L_{ \cS' }( s_j ) \big) = 0
\]
follows from Definition \ref{def:SMS}(ii) for $\cS$.

  \item  If $s_i \in \cS'$, $s_j \in \cS \setminus \cS'$ then 
\[
  \cC\big( L_{ \cS' }( s_i ),\Sigma^{ \ell }L_{ \cS' }( s_j ) \big) \cong
  \cC\big( \Sigma^{ -1 }s_i,\Sigma^{ \ell }L_{ \cS' }( s_j ) \big) \cong
  \cC\big( s_i,\Sigma^{ \ell+1 }L_{ \cS' }( s_j ) \big) = (*).
\]
For Definition \ref{def:SMS}(i) note that if $\ell = 0$ then $(*) = 0$ by the triangulated Wakamatsu Lemma, \cite[lem.\ 2.1]{JorARSub}, since $s_i \in \cT$ by Lemma \ref{lem:sec_5_after}(i) while $L_{ \cS' }( s_j )$ is the mapping cone of a $( \Sigma^{ -1 }\cT )$-cover and $\Sigma^{ -1 }\cT \subseteq \cC$ is closed under extensions by Lemma \ref{lem:TF_closed_under_extensions}.  For Definition \ref{def:SMS}(ii) note that $L_{ \cS' }( s_j ) \in \cF \subseteq \langle \cS \rangle$ by Lemma \ref{lem:sec_5_after}(iii) so if $\ell \in \{ -w+1, \ldots, -2 \}$ then $(*) = 0$ follows from Definition \ref{def:SMS}(ii) for $\cS$.  If $\ell = -1$ then $(*) = \cC \big( s_i,L_{ \cS' }( s_j ) \big) = 0$ since $s_i \in \cT$ and $L_{ \cS' }( s_j ) \in \cF$ by Lemma \ref{lem:sec_5_after}(i)+(iii).

  \item  If $s_i \in \cS \setminus \cS'$, $s_j \in \cS'$ then 
\[
  \cC\big( L_{ \cS' }( s_i ),\Sigma^{ \ell }L_{ \cS' }( s_j ) \big) \cong
  \cC\big( L_{ \cS' }( s_i ),\Sigma^{ \ell }\Sigma^{ -1 }s_j \big) \cong
  \cC\big( L_{ \cS' }( s_i ),\Sigma^{ \ell-1 }s_j \big) = (**).
\]
We have $L_{ \cS' }( s_i ) \in \cF \subseteq \langle \cS \rangle$ by Lemma \ref{lem:sec_5_after}(iii).  For Definition \ref{def:SMS}(i) note that if $\ell = 0$ then $(**) = 0$ follows from Definition \ref{def:SMS}(i) for $\cS$.  For Definition \ref{def:SMS}(ii), if $\ell \in \{ -w+2, \ldots, -1 \}$ then $(**) = 0$ follows from Definition \ref{def:SMS}(ii) for $\cS$.  If $\ell = -w+1$ then
\[
  (**) = \cC\big( L_{ \cS' }( s_i ),\Sigma^{ -w }s_j \big) \cong
  \dual\cC\big( s_j,L_{ \cS' }( s_i ) \big) = 0,
\]
where the isomorphism holds because $\cC$ is $(-w)$-Calabi--Yau and the last equality holds since $s_j \in \cT$ and $L_{ \cS' }( s_i ) \in \cF$ by Lemma \ref{lem:sec_5_after}(i)+(iii). \qedhere
\end{itemize}
\end{proof}

\begin{proof}[Proof of Theorem \ref{thm:37}(iv)]
The proof will be divided into two steps.

Step 1: This step will prove
\begin{align}
\label{equ:37_4_100}
  \cF & \subseteq \langle L_{ \cS' }( \cS ) \rangle, \\
\label{equ:37_4_101}
  \Sigma^{ -1 }\cT & \subseteq \langle L_{ \cS' }( \cS ) \rangle.
\end{align}
To prove Equation \eqref{equ:37_4_101}, observe that $\Sigma^{ -1 }\cS' \subseteq L_{ \cS' }( \cS )$ by Definition \ref{def:tilting}(i) whence $\langle \Sigma^{ -1 }\cS' \rangle \subseteq \langle L_{ \cS' }( \cS ) \rangle$, so $\Sigma^{ -1 }\cT = \Sigma^{ -1 }\langle \cS' \rangle = \langle \Sigma^{ -1 }\cS' \rangle \subseteq \langle L_{ \cS' }( \cS ) \rangle$.  

We will prove Equation \eqref{equ:37_4_100} by showing
\begin{equation}
\label{equ:F_in_L_1}
  f \in \cF
  \Rightarrow
  f \in \langle L_{ \cS' }( \cS ) \rangle
\end{equation}
by induction on $n = \length_{ \langle \cS \rangle }( f )$, which makes sense by Theorem \ref{thm:16c}(i)+(iii) since $\cF \subseteq \langle \cS \rangle$.  If $n=0$ then $f=0$ so \eqref{equ:F_in_L_1} is trivially true.

For the induction, assume \eqref{equ:F_in_L_1} is true for $\length_{ \langle \cS \rangle }( f ) = n-1$ and let $f \in \cF$ satisfy $\length_{ \langle \cS \rangle }( f ) = n$.  There is a short exact sequence
\begin{equation}
\label{equ:F_in_L_2}
  f' \xrightarrow{} f \xrightarrow{} u
\end{equation}
in $\langle \cS \rangle$ with $\length_{ \langle \cS \rangle }( f' ) = n-1$ and $u$ simple in $\langle \cS \rangle$.  Recall that $( \cT,\cF )$ is a torsion pair in $\langle \cS \rangle$ by Theorem \ref{thm:37}(iii).  Hence $f \in \cF$ implies  $f' \in \cF$ so
\begin{equation}
\label{equ:F_in_L_4}
  f' \in \langle L_{ \cS' }( \cS ) \rangle
\end{equation}
by induction.  By Theorem \ref{thm:16c}(ii) we can assume $u \in \cS$.   

If $u \in \cS \setminus \cS'$ then Definition \ref{def:tilting}(i) gives a short triangle $\Sigma^{ -1 }t_u \xrightarrow{} u \xrightarrow{} L_{ \cS' }( u )$ with $t_u \in \cT$.  Each end term is in $\langle L_{ \cS' }( \cS ) \rangle$, the first by Equation \eqref{equ:37_4_101} and the second by definition.   Hence $u \in \langle L_{ \cS' }( \cS ) \rangle$ so Equations \eqref{equ:F_in_L_2} and \eqref{equ:F_in_L_4} imply $f \in \langle L_{ \cS' }( \cS ) \rangle$.

If $u \in \cS' \subseteq \cT$ then the short exact sequence \eqref{equ:F_in_L_2} cannot split because $f \in \cF$.  Hence \eqref{equ:F_in_L_2} induces a triangle $\Sigma^{ -1 }u \xrightarrow{ \varphi } f' \xrightarrow{} f \xrightarrow{} u$ with $\varphi \neq 0$.  By Definition \ref{def:tilting}(i) it reads $L_{ \cS' }( u ) \xrightarrow{ \varphi } f' \xrightarrow{} f \xrightarrow{} u$.  Hence $f \in \langle L_{ \cS' }( \cS ) \rangle$ by Equation \eqref{equ:F_in_L_4} and the dual of \cite[lem.\ 2.6]{D}, which applies because $L_{ \cS' }( \cS )$ is a $w$-orthogonal collection by Theorem \ref{thm:37}(i).

Step 2: This step will complete the proof.  Note that $\langle L_{ \cS' }( \cS ) \rangle \subseteq \cC$ is a proper abelian subcategory by Theorem \ref{thm:16c}(i) and Theorem \ref{thm:37}(i).

We have the inclusions \eqref{equ:37_4_100} and \eqref{equ:37_4_101}, and $\cF$ and $\Sigma^{ -1 }\cT$ are full subcategories closed under isomorphisms in $\cC$.  To show that $( \cF,\Sigma^{-1}\cT )$ is a torsion pair in $\langle L_{ \cS' }( \cS ) \rangle$, it is hence sufficient to show that it satisfies axioms (IV) and (V) in \cite[sec.\ 1]{Dickson}, and axiom (V) says $\cC( \cF,\Sigma^{ -1 }\cT ) = 0$ which is true by Equation \eqref{equ:ConditionEShortforS}.

For axiom (IV), stating the existence of fundamental sequences, note that Definition \ref{def:tilting}(i) and Lemma \ref{lem:sec_5_after}(i)+(iii) imply that $L_{ \cS' }( \cS ) \subseteq \cF \cup ( \Sigma^{ -1 }\cT )$, in particular $L_{ \cS' }( \cS ) \subseteq \cF * ( \Sigma^{ -1 }\cT )$.  Theorem \ref{thm:37}(iii) and Lemma \ref{lem:24_and_25_small_version}(ii) give that $\cF * ( \Sigma^{ -1 }\cT )$ is closed under extensions in $\cC$, so it follows that $\langle L_{ \cS' }( \cS ) \rangle \subseteq \cF * ( \Sigma^{ -1 }\cT )$.  Each $a \in \langle L_{ \cS' }( \cS ) \rangle$ hence permits a short triangle $f \xrightarrow{} a \xrightarrow{} \Sigma^{ -1 }t$ with $f \in \cF$ and $\Sigma^{ -1 }t \in \Sigma^{ -1 }\cT$.  This is a short exact sequence in $\langle L_{ \cS' }( \cS ) \rangle \subseteq \cC$ because of Equations \eqref{equ:37_4_100} and \eqref{equ:37_4_101}, showing that axiom (IV) in \cite[sec.\ 1]{Dickson} holds.

Finally, Lemma \ref{lem:S_prime} gives that $\Sigma^{ -1 }\cT = \Sigma^{ -1 }\langle \cS' \rangle = \langle \Sigma^{ -1 }\cS' \rangle = \langle L_{ \cS' }( \cS' ) \rangle$ is the additive subcategory of $\langle L_{ \cS' }( \cS ) \rangle$ consisting of objects with composition factors isomorphic to objects in $L_{ \cS' }( \cS' )$.  In particular, it is closed under quotient objects in $\langle L_{ \cS' }( \cS ) \rangle$. 
\end{proof}

\begin{proof}[Proof of Theorem \ref{thm:37}(ii)]
Assume that $\cS$ is a $w$-simple minded system.  Since $L_{ \cS' }( \cS )$ is a $w$-orthogonal collection by Theorem \ref{thm:37}(i), it remains to prove that it satisfies Definition \ref{def:SMS}(iii)'.

Observe that we have
\begin{align}
\label{equ:37_4_10}
  \langle \cS \rangle & = \cT * \cF, \\
\label{equ:37_4_11}
  \langle L_{ \cS' }( \cS ) \rangle & = \cF * \Sigma^{ -1 }\cT.
\end{align}
In each case, the equation holds because we have a torsion pair in a proper abelian subcategory; see Theorem \ref{thm:37}(iii)+(iv).  Combining Definition \ref{def:SMS}(iii)' for $\cS$ with Equation \eqref{equ:37_4_10} gives
\begin{align}
\nonumber
  \cC & =
  \langle \cS \rangle * \Sigma^{ -1 } \langle \cS \rangle * \cdots * \Sigma^{ -w+1 }\langle \cS \rangle \\
\nonumber  
  & = \cT * \cF * ( \Sigma^{ -1 } \cT ) * ( \Sigma^{ -1 } \cF ) * \cdots * ( \Sigma^{ -w+1 } \cT ) * ( \Sigma^{ -w+1 } \cF ) \\
\label{equ:37_4_1}
  & = \cT * \cY
\end{align}
with
\begin{equation}
\label{equ:37_4_1000}
  \cY = \cF * ( \Sigma^{ -1 } \cT ) * ( \Sigma^{ -1 } \cF ) * \cdots * ( \Sigma^{ -w+1 } \cT ) * ( \Sigma^{ -w+1 } \cF ).
\end{equation}
In this formula, on the right hand side, none of the subcategories have non-zero morphisms to any of the subsequent ones; this follows from $( \cT,\cF )$ being a torsion pair in $\langle \cS \rangle$ and Definition \ref{def:SMS}(ii).  Hence $\cY$ is closed under summands by repeated application of \cite[prop.\ 2.1(1)]{IY}.  

Now let $c \in \cC$ be given.  Let
\begin{equation}
\label{equ:37_4_3}	
  c' \xrightarrow{} c \xrightarrow{ \gamma } \Sigma^{ -w }t
\end{equation}
be a short triangle with $\gamma$ a $( \Sigma^{ -w }\cT )$-envelope, which exists by Lemma \ref{lem:functorial_finiteness}.  The triangulated Wakamatsu Lemma, dual to \cite[lem.\ 2.1]{JorARSub}, says $\cC( c',\Sigma^{ -w }\cT ) = 0$ because $\Sigma^{ -w }\cT \subseteq \cC$ is closed under extensions by Lemma \ref{lem:TF_closed_under_extensions}.  Since $\cC$ is $(-w)$-Calabi--Yau, it follows that
\begin{equation}
\label{equ:37_4_2}	
  \cC( \cT,c' ) \cong \dual\!\cC( c',\Sigma^{ -w }\cT ) = 0.
\end{equation}
By Equation \eqref{equ:37_4_1} there is a short triangle $t \xrightarrow{ \varphi } c' \xrightarrow{} y$ with $t \in \cT$ and $y \in \cY$.  Equation \eqref{equ:37_4_2} implies $\varphi = 0$ so  $c'$ is a direct summand of $y$.  Since $\cY$ is closed under direct summands this gives $c' \in \cY$.    But then Equations \eqref{equ:37_4_3}, \eqref{equ:37_4_1000}, and \eqref{equ:37_4_11} imply
\begin{align*}
  c & \in \cY * ( \Sigma^{ -w } \cT ) \\
  & = \cF * ( \Sigma^{ -1 } \cT ) * ( \Sigma^{ -1 } \cF ) * \cdots * ( \Sigma^{ -w+1 } \cT ) * ( \Sigma^{ -w+1 } \cF ) * ( \Sigma^{ -w } \cT ) \\
  & = \langle L_{ \cS' }( \cS ) \rangle * \Sigma^{ -1 }\langle L_{ \cS' }( \cS ) \rangle * \cdots * \Sigma^{ -w+1 }\langle L_{ \cS' }( \cS ) \rangle,
\end{align*}
proving that $L_{ \cS' }( \cS )$ satisfies Definition \ref{def:SMS}(iii)'.
\end{proof}

\section{Right tilting of $w$-orthogonal collections}
\label{sec:right_tilting}

This section states Theorem \ref{thm:37_prime}, which is the dual of Theorem \ref{thm:37} and implies the second part of Theorem \ref{thm:C} from the introduction.  The proof of Theorem \ref{thm:37_prime} is omitted as it is dual to the proof of Theorem \ref{thm:37}.

\begin{Setup}
\label{set:right_tilting}
In this section the following are fixed.
\begin{enumerate}
\setlength\itemsep{4pt}

  \item  $w \geqslant 2$ is an integer.
  
  \item  $k$ is still a field, $\cC$ an essentially small $k$-linear $\Hom$-finite triangulated category with split idempotents, and we assume that $\cC$ is $(-w)$-Calabi--Yau.

  \item  $\cS = \{ s_1, \ldots, s_e \}$ is a fixed $w$-orthogonal collection in $\cC$ and $\cS'' \subseteq \cS$ is a subset.  We have the extension closures $\langle \cS'' \rangle \subseteq \langle \cS \rangle$. 

  \item  Each $s \in \cS \setminus \cS''$ is assumed to have a $( \Sigma\langle \cS'' \rangle )$-envelope.

\end{enumerate}
\end{Setup}

\begin{Remark}
Setup \ref{set:right_tilting}(iv) implies that the right tilt $R_{ \cS'' }( \cS )$ is defined, see Definition \ref{def:tilting}(ii).
\end{Remark}

\begin{Theorem}
\label{thm:37_prime}
The right tilt $R_{ \cS'' }( \cS )$ and the classes $\cU = {}^{ \perp }\langle \cS'' \rangle \cap \langle \cS \rangle$, $\cG = \langle \cS'' \rangle$ have the following properties.\begin{enumerate}
\setlength\itemsep{4pt}

  \item  $R_{ \cS'' }( \cS )$ is a $w$-orthogonal collection.

  \item  If $\cS$ is a $w$-simple minded system, then so is $R_{ \cS'' }( \cS )$.

  \item  $( \cU,\cG )$ is a torsion pair in the abelian category $\langle \cS \rangle$ and $\cG$ is closed under quotient objects in $\langle \cS \rangle$.

  \item  $( \Sigma\cG,\cU )$ is torsion pair in the abelian category $\langle R_{ \cS'' }( \cS ) \rangle$ and $\Sigma\cG$ is closed under subobjects in $\langle R_{ \cS'' }( \cS ) \rangle$.

\end{enumerate}
\end{Theorem}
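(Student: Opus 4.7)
The plan is to transport the four-step proof of Theorem \ref{thm:37} from Section \ref{sec:left_tilting} by a systematic dualization: $\cS''$ in place of $\cS'$, $(\Sigma\langle\cS''\rangle)$-envelopes in place of $(\Sigma^{-1}\langle\cS'\rangle)$-covers, $\Sigma$ interchanged with $\Sigma^{-1}$, and torsion/torsion-free as well as sub-/quotient-object interchanged. The $(-w)$-Calabi--Yau hypothesis $\cC(x,y)\cong\dual\cC(y,\Sigma^{-w}x)$ is self-dual, and every auxiliary result of Section \ref{sec:torsion} admits a formal dual whose proof is verbatim after these substitutions.

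I would first address (iii). By Lemma \ref{lem:S_prime}, $\cG=\langle\cS''\rangle$ is the Serre subcategory of $\langle\cS\rangle$ whose objects are those with all composition factors in $\cS''$; in particular $\cG$ is closed under quotient objects, and since each object of $\langle\cS\rangle$ has finite length (Theorem \ref{thm:16c}(iii)) it is a torsion-free class with associated torsion class $\cU={}^{\perp}\langle\cS''\rangle\cap\langle\cS\rangle$. Next I would establish the dual of Lemma \ref{lem:sec_5_after}: that $s\in\cS''$ implies $s\in\cG$; that $s\in\cS\setminus\cS''$ implies $s\in\cU$ by Definition \ref{def:SMS}(i); and that $s\in\cS\setminus\cS''$ implies $R_{\cS''}(s)\in\cU$ by the dual of Lemma \ref{lem:35}. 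The dual of Lemma \ref{lem:f_to_T} needed in this argument reads $\cC(\cG,u)=0$ for any simple $u\in\cU$, because any non-zero $g\to u$ is epi (as $u$ is simple), making $u$ a quotient of $g\in\cG$, hence in $\cG$, a contradiction.

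For (i) I would mirror the four-case analysis of Theorem \ref{thm:37}(i), computing $\cC(R_{\cS''}(s_i),\Sigma^{\ell}R_{\cS''}(s_j))$ by combining the triangle \eqref{equ:R_triangle} with: the dual of Lemma \ref{lem:36} when $s_i,s_j\in\cS\setminus\cS''$; the cover-version of the Wakamatsu lemma, \cite[lem.\ 2.1]{JorARSub}, in place of its envelope-version; and Serre duality $\cC(x,y)\cong\dual\cC(y,\Sigma^{-w}x)$ to reduce the degree $-w+1$ vanishing to a degree $0$ vanishing. For (iv) I would follow the two-step structure of the proof of Theorem \ref{thm:37}(iv). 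Step 1 shows $\Sigma\cG\subseteq\langle R_{\cS''}(\cS)\rangle$ (immediate from $\Sigma\cS''\subseteq R_{\cS''}(\cS)$) and $\cU\subseteq\langle R_{\cS''}(\cS)\rangle$ by induction on composition length in $\langle\cS\rangle$, splicing \eqref{equ:R_triangle} into short exact sequences produced by the torsion pair $(\cU,\cG)$. Step 2 verifies Dickson's axioms (IV) and (V) for $(\Sigma\cG,\cU)$: axiom (V) is \eqref{equ:ConditionEShortforS}, and axiom (IV) follows from the dual of Lemma \ref{lem:24_and_25_small_version}, namely $\cU*(\Sigma\cG)\subseteq(\Sigma\cG)*\cU$ and $(\Sigma\cG)*\cU$ closed under extensions, proved by the same kernel-cokernel argument via Lemma \ref{lem:23} since $\cG$ is Serre (hence closed under subobjects) and $\cU$ is a torsion class (hence closed under quotients), combined with $R_{\cS''}(\cS)\subseteq\Sigma\cG\cup\cU\subseteq(\Sigma\cG)*\cU$.

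Finally, (ii) would be proved by dualizing the Wakamatsu-plus-Calabi--Yau argument of Theorem \ref{thm:37}(ii). Given $c\in\cC$, take a $(\Sigma\cG)$-cover $\Sigma g\to c$ and complete to a triangle $\Sigma g\to c\to c''$; the cover-form of Wakamatsu gives $\cC(\Sigma\cG,c'')=0$, and Serre duality converts this into $\cC(c'',\Sigma^{-w+1}\cG)=0$. Using the decomposition $\cC=\cU*\cG*\Sigma^{-1}\cU*\cdots*\Sigma^{-w+1}\cU*\Sigma^{-w+1}\cG$, together with the summand-closure of the subcategory $\cU*\cG*\cdots*\Sigma^{-w+1}\cU$ (by iterated application of \cite[prop.\ 2.1(1)]{IY}), the vanishing forces $c''\in\cU*\cG*\cdots*\Sigma^{-w+1}\cU$, and hence $c\in\langle R_{\cS''}(\cS)\rangle*\Sigma^{-1}\langle R_{\cS''}(\cS)\rangle*\cdots*\Sigma^{-w+1}\langle R_{\cS''}(\cS)\rangle$. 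The main obstacle is just careful bookkeeping: ensuring that in each appeal the correct version of the Wakamatsu lemma (cover vs.\ envelope) is invoked and that Serre duality is applied with the correct shift; no argument fundamentally different from Section \ref{sec:left_tilting} is required.
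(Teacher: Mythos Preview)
Your proposal is correct and is precisely the approach the paper takes: the paper explicitly omits the proof of Theorem \ref{thm:37_prime} on the grounds that it is dual to the proof of Theorem \ref{thm:37}, and your outline carries out exactly that dualization with the correct substitutions and the appropriate cover/envelope versions of the Wakamatsu lemma. The bookkeeping you flag (direction of Serre duality, sub-/quotient closure of $\cG$ and $\cU$, summand closure of $\cU*\cG*\cdots*\Sigma^{-w+1}\cU$) is all handled correctly.
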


We continue by showing that left and right tilting are inverse operations.

\begin{Proposition}
\label{pro:left_tilting_v_right_tilting}
Assume Setups \ref{set:left_tilting} and \ref{set:right_tilting} whence the left tilt $L_{ \cS' }( \cS )$ and the right tilt $R_{ \cS'' }( \cS )$ are defined.  Then the following are satisfied.
\begin{enumerate}
\setlength\itemsep{4pt}

  \item  The right tilt $R_{ \Sigma^{ -1 }\cS' } \big( L_{ \cS' }( \cS ) \big)$ is defined and equals $\cS$ up to isomorphism.

  \item  The left tilt $L_{ \Sigma\cS'' } \big( R_{ \cS'' }( \cS ) \big)$ is defined and equals $\cS$ up to isomorphism.

\end{enumerate}
\end{Proposition}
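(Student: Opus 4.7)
The plan is to prove part~(i); part~(ii) follows by the dual argument, rotating \eqref{equ:R_triangle} and invoking Theorem~\ref{thm:37_prime}(iv) in place of Theorem~\ref{thm:37}(iv). I split into cases according to whether $s \in \cS'$. For $s \in \cS'$, Definition~\ref{def:tilting}(i) gives $L_{\cS'}(s) = \Sigma^{-1}s \in \Sigma^{-1}\cS'$, and the second clause of Definition~\ref{def:tilting}(ii) then immediately yields $R_{\Sigma^{-1}\cS'}(L_{\cS'}(s)) = \Sigma\Sigma^{-1}s = s$ with no envelope to construct.

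For $s \in \cS \setminus \cS'$, I rotate the defining triangle \eqref{equ:L_triangle} to
\[
  s \xrightarrow{\psi} L_{\cS'}(s) \xrightarrow{\alpha} t_s \xrightarrow{-\Sigma\tau_s} \Sigma s,
\]
and aim to show that $\alpha$ is a $\langle \cS'\rangle$-envelope. Once established, the rotated triangle is exactly \eqref{equ:R_triangle} for the right tilt of $L_{\cS'}(s)$ at $\Sigma^{-1}\cS'$, simultaneously proving that this right tilt is defined and that $R_{\Sigma^{-1}\cS'}(L_{\cS'}(s)) \cong s$. That $\alpha$ is a preenvelope follows from the long exact sequence obtained by applying $\cC(-,t')$ for $t' \in \langle\cS'\rangle$: surjectivity of $\alpha^*$ reduces to vanishing of the next map $\psi^*:\cC(L_{\cS'}(s),t') \to \cC(s,t')$, which holds provided $\cC(s,t') = 0$. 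I prove the latter by induction on the length of $t'$ in the finite-length abelian category $\langle\cS\rangle$ (Theorem~\ref{thm:16c}(iii)): the base case $t' \in \cS'$ uses $w$-orthogonality (Definition~\ref{def:SMS}(i)) since $s \neq t'$, and the inductive step uses a short exact sequence $t_1 \hookrightarrow t' \twoheadrightarrow t_2$ in $\cT = \langle\cS'\rangle$ with $t_1,t_2$ of strictly smaller length (available via Lemma~\ref{lem:S_prime}), which is a short triangle by Theorem~\ref{thm:16c}(i), combined with Equation \eqref{equ:ConditionEShortforS} to kill the shifted term $\cC(s,\Sigma^{-1}t_2)$.

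To upgrade the preenvelope to an envelope I invoke Krull--Schmidt: there is a decomposition $t_s = t_s^{\min} \oplus t_s^{\mathrm{red}}$ such that $\alpha$ has the shape $\begin{psmallmatrix}\alpha_{\min}\\ 0\end{psmallmatrix}$ with $\alpha_{\min}: L_{\cS'}(s) \to t_s^{\min}$ a minimal $\langle\cS'\rangle$-envelope. The rotated triangle then decomposes as the sum of the envelope triangle $s^* \to L_{\cS'}(s) \to t_s^{\min} \to \Sigma s^*$ and the trivial triangle $\Sigma^{-1}t_s^{\mathrm{red}} \to 0 \to t_s^{\mathrm{red}} \to t_s^{\mathrm{red}}$, forcing $s \cong s^* \oplus \Sigma^{-1}t_s^{\mathrm{red}}$. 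Since $\End_{\cC}(s)$ is a skew field, $s$ is indecomposable, so exactly one of the two summands vanishes. The alternative $s^* = 0$ would place a nonzero copy of $s$ simultaneously in $\cF$ (by Lemma~\ref{lem:sec_5_after}(ii)) and in $\Sigma^{-1}\cT$, contradicting the disjointness of the torsion pair $(\cF,\Sigma^{-1}\cT)$ in $\langle L_{\cS'}(\cS)\rangle$ from Theorem~\ref{thm:37}(iv). Hence $t_s^{\mathrm{red}} = 0$, so $\alpha = \alpha_{\min}$ is itself the minimal envelope and its cocone is $s$, as required.

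The main obstacle is this final rigidity step: rather than proving left minimality of $\alpha$ directly (which would require a somewhat technical Krull--Schmidt endomorphism argument deducing left minimality of $\alpha$ from right minimality of the cover $\tau_s$), Theorem~\ref{thm:37}(iv) is the clean shortcut that pins down the cocone. The rest of the argument is essentially bookkeeping: rotating the triangle, running the induction on length to obtain the preenvelope property, and then collapsing to the minimal envelope using indecomposability of $s$ and the torsion-pair disjointness in $\langle L_{\cS'}(\cS)\rangle$.
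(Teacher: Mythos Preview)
Your proof is correct and follows the same overall architecture as the paper: handle $s \in \cS'$ by direct computation, and for $s \in \cS \setminus \cS'$ rotate the defining triangle and show that the map $L_{\cS'}(s) \to t_s$ is a $\langle \cS' \rangle$-envelope.  The preenvelope step is identical in content (both arguments boil down to $\cC(s,\langle \cS' \rangle) = 0$, which you spell out by induction on length while the paper simply cites Definition~\ref{def:SMS}(i)).

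The genuine difference is in the minimality step.  You split off a minimal envelope via Krull--Schmidt, obtain $s \cong s^* \oplus \Sigma^{-1}t_s^{\mathrm{red}}$, and rule out $s^* = 0$ by invoking the torsion pair $(\cF,\Sigma^{-1}\cT)$ in $\langle L_{\cS'}(\cS) \rangle$ from Theorem~\ref{thm:37}(iv).  The paper instead proves left minimality of $\gamma$ directly: by \cite[lem.\ 3.12(b)]{F} it suffices that $\tau$ lies in the radical, and if not then $s$ would be a summand of $\Sigma^{-1}t_s \in \Sigma^{-1}\langle \cS \rangle$, forcing $\cC(s,s) = 0$ by Equation~\eqref{equ:ConditionEShortforS}.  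So the ``somewhat technical'' direct argument you chose to bypass is in fact a two-line contradiction using only the ambient orthogonality, and it avoids the heavier input of Theorem~\ref{thm:37}(iv).  Your route is still valid, but the paper's is more economical and logically independent of the tilting theorems.
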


\begin{proof}
(i):  Given $s \in \cS \setminus \cS'$ the object $L_{ \cS' }( s )$ is defined by a triangle
\[
  \Sigma^{ -1 }t_s \xrightarrow{ \tau } s \xrightarrow{ \sigma } L_{ \cS' }( s ) \xrightarrow{ \gamma } t_s
\]  
with $\tau$ a $\Sigma^{ -1 }\langle \cS' \rangle$-cover, see Definition \ref{def:tilting}(i).  We claim that $\gamma$ is an $\langle \cS' \rangle$-envelope, that is, a $\Sigma( \Sigma^{ -1 }\langle \cS' \rangle )$-envelope.  This shows that the right tilt in part (i) is defined and that
\begin{equation}
\label{equ:left_tilting_v_right_tilting_1}
  \mbox{$R_{ \Sigma^{ -1 }\cS' }\big( L_{ \cS' }( s ) \big) \cong s$ \; for \; $s \in \cS \setminus \cS'$,}
\end{equation}
see Definition \ref{def:tilting}(ii).  

To see that $\gamma$ is an $\langle \cS' \rangle$-preenvelope, let $L_{ \cS' }( s ) \xrightarrow{ \lambda } t$ be a morphism with $t \in \langle \cS' \rangle$.  Since $\cC( s,\langle \cS' \rangle ) = 0$ follows from Definition \ref{def:SMS}(i), we have $\lambda\sigma = 0$ so $\lambda$ can be factorised through $\gamma$.

To see that $\gamma$ is left minimal, it is enough by \cite[lem.\ 3.12(b)]{F} to see that $\Sigma\tau$ is in the radical, hence enough to see that $\tau$ is in the radical.  If not, then $\tau$ would have a component that was an isomorphism.  This would imply that $s$ was a direct summand of $\Sigma^{ -1 }t_s$, because $s$ is indecomposable since $\cC( s,s )$ is a skew field by Definition \ref{def:SMS}(i).  Hence we would have $s \in \Sigma^{ -1 }\langle \cS' \rangle \subseteq \Sigma^{ -1 }\langle \cS \rangle$.  We also have $s \in \langle \cS \rangle$ so Equation \eqref{equ:ConditionEShortforS} would give $\cC( s,s ) = 0$ which is false since $\cC( s,s )$ is a skew field.  

Finally, observe that Definition \ref{def:tilting} gives
\[
  \mbox{$R_{ \Sigma^{ -1 }\cS' } \big( L_{ \cS' }( s' ) \big) = R_{ \Sigma^{ -1 }\cS' }( \Sigma^{ -1 }s' ) = 
  \Sigma\Sigma^{ -1 }s' = s'$ \; for \; $s' \in \cS'$.}
\]
Combining with Equation \eqref{equ:left_tilting_v_right_tilting_1} completes the proof of part (i).  

(ii):  Follows by dual arguments.  
\end{proof}

\begin{Remark}
\label{rmk:C}
The assumptions in Theorem \ref{thm:C} clearly imply Setup \ref{set:left_tilting}(i)-(iii) and Setup \ref{set:right_tilting}(i)-(iii), and they imply Setup \ref{set:left_tilting}(iv) and Setup \ref{set:right_tilting}(iv) by Lemma \ref{lem:functorial_finiteness}.  Hence Theorem \ref{thm:C} follows from Theorems \ref{thm:37}(ii)-(iv) and \ref{thm:37_prime}(ii)-(iv).
\end{Remark}

\section{Example: Negative cluster categories of Dynkin type $A$}
\label{sec:Type_A}

This section applies Theorems \ref{thm:A}, \ref{thm:B}, and \ref{thm:C} from the introduction to the negative cluster categories of Dynkin type $A$ first studied in \cite{CS3}.

\begin{Setup}
\label{set:Type_A}
In this section the following are fixed.
\begin{itemize}
\setlength\itemsep{4pt}

  \item  $e \geqslant 1$ and $w \geqslant 2$ are integers.
  
  \item  $Q$ is a Dynkin quiver of type $A_e$.
  
  \item  $\cC = \cC_{ -w }( Q ) = \cD^{ \b }( \mod\,kQ )/ \tau\Sigma^{ w+1 }$ is the $(-w)$-cluster category of Dynkin type $A_e$.
  

\end{itemize}
\end{Setup}

\begin{Remark}
\label{rmk:Type_A}
The category $\cC$ satisfies Setup \ref{set:blanket}, see
\cite[sec.\ 1]{BMRRT} 
and \cite[sec.\ 4]{KellerOrbit}, 
and it is $(-w)$-Calabi--Yau by \cite[sec.\ 8.4]{KellerOrbit}.  In particular, Theorems \ref{thm:A} through \ref{thm:C} from the introduction apply.
\end{Remark}

\begin{Remark}
[A combinatorial model of $\cC$]
\label{rmk:CPS2}
The following combinatorial model of $\cC$ is due to \cite{CS3}.  Set $N = ( w+1 )( e+1 ) - 2$ and let $P$ be an $N$-gon with vertices $\{ 0, \ldots, N-1 \}$ numbered in the clockwise direction.  The vertices have a cyclic order defined by their positions on $P$, and arithmetic of vertices is done modulo $N$.  A {\em diagonal} is a set $s = \{ \dt{s},\ddt{s} \}$ of two distinct vertices, which are allowed to be consecutive.  It has {\em end points} $\dt{s}$, $\ddt{s}$, and $s$ is {\em admissible} if it divides $P$ into two subpolygons whose numbers of vertices are divisible by $w+1$.  Diagonals $s = \{ \dt{s},\ddt{s} \}$ and $s' = \{ \dt{s}',\ddt{s}' \}$ are said to {\em cross} if $\dt{s} < \dt{s}' < \ddt{s} < \ddt{s}' < \dt{s}$ or $\dt{s} < \ddt{s}' < \ddt{s} < \dt{s}' < \dt{s}$.  The combinatorial model has the following key properties.
\begin{enumerate}
\setlength\itemsep{4pt}

  \item  There is a bijection
\[
  \{\mbox{admissible diagonals}\} \longleftrightarrow
  \{\mbox{indecomposable objects of $\cC$}\},
\]
see \cite[sec.\ 10.1]{CSP2}.  We will use ``admissible diagonal'' as synonymous with ``indecomposable object''.  

  \item  If $s$, $s'$ are admissible diagonals, then the following statements are equivalent, see \cite[sec.\ 10.1]{CSP2}.
\begin{itemize}
\setlength\itemsep{4pt}

  \item  There is an irreducible morphism $s \xrightarrow{} s'$.

  \item  There are vertices $\dt{s}$, $\ddt{s}$ such that $s = \{ \dt{s},\ddt{s} \}$ and $s' = \{ \dt{s},\ddt{s}+w+1 \}$.

\end{itemize}

  \item  The suspension functor acts on admissible diagonals by
\[
  \Sigma \{ \dt{s},\ddt{s} \} = \{ \dt{s}+1,\ddt{s}+1 \},
\]
see \cite[sec.\ 10.1]{CSP2}.  

  \item  We have
\[
  \{ \mbox{$w$-simple minded systems in $\cC$} \}
  =
  \left\{ \cS
    \left| 
      \begin{array}{l}
        \mbox{$\cS$ is a set of $e$ admissible diagonals} \\[1.5mm]
        \mbox{which are pairwise non-crossing and} \\[1.5mm]
        \mbox{do not share any endpoints}
      \end{array}
    \right.
  \right\},
\]
see \cite[thm.\ 6.5]{CS3}, \cite[prop.\ 2.13]{CSP1}.
\end{enumerate}
\end{Remark}

The following result is due to \cite[cor.\ 6.3]{CS3} and \cite[cor.\ 10.6 and fig.\ 11]{CSP2}.

\begin{Proposition}
\label{pro:Ext}
Let $s' = \{ \dt{s}',\ddt{s}' \}$ and $s$ be admissible diagonals.  Then
\[
  \cC( \Sigma^{ -1 }s',s )
  =
  \left\{
    \begin{array}{cl}
      k & \mbox{if (i), (ii), or (iii) below holds,} \\[1.5mm]
      0 & \mbox{otherwise,}
    \end{array}
  \right.
\]
where (i), (ii), and (iii) are the following conditions.  
\begin{enumerate}
\setlength\itemsep{4pt}

  \item  $s$ and $s'$ do not cross and do not share an end point, but $s$ has $\dt{s}'-1$ or $\ddt{s}'-1$ as an end point.

  \item  $s$ and $s'$ cross and $s = \{ \dt{s}'-i(w+1),\ddt{s}'-j(w+1) \}$ for certain integers $i,j$. 
  
  \item  $s = \{ \dt{s}'-1,\ddt{s}'-1 \}$.

\end{enumerate}
If (i), (ii), or (iii) holds then there is a triangle $\Sigma^{ -1 }s' \xrightarrow{ \delta } s \xrightarrow{} e \xrightarrow{} s'$ with $\delta \neq 0$, and $e$ has indecomposable summands given by the dotted diagonals in the relevant part of Figure \ref{fig:Ext}.
\begin{figure}
\begingroup
\[
  \begin{tikzpicture}[scale=1.5]

    \begin{scope}[shift={(-4.2,0)}]
      \draw (90:1.9cm) node {\rm (i)};      
      \node[name=s, shape=circle, minimum size=4.5cm, draw] {}; 
      \draw[thick] (90-6*360/21:1.5cm) to (90+6*360/21:1.5cm);
      \draw[thick] (90-2*360/21:1.5cm) to (90+5*360/21:1.5cm);
      \draw[very thick,dotted] (90-2*360/21:1.5cm) to (90-6*360/21:1.5cm);    
      \draw (90-10.5*360/21:0.5cm) node {$s$};      
      \draw (90+1.5*360/21:0.95cm) node {$s'$};      
      \draw (90-4*360/21:1.05cm) node {$e$};      
    \end{scope}

    \begin{scope}[shift={(0,0)}]
      \draw (90:1.9cm) node {\rm (ii)};      
      \node[name=s, shape=circle, minimum size=4.5cm, draw] {}; 
      \draw[thick] (90-6*360/21:1.5cm) to (90+6*360/21:1.5cm);
      \draw[thick] (90+0*360/21:1.5cm) to (90+12*360/21:1.5cm);
      \draw[very thick,dotted] (90-6*360/21:1.5cm) to (90+0*360/21:1.5cm);    
      \draw[very thick,dotted] (90+6*360/21:1.5cm) to (90+12*360/21:1.5cm);    
      \draw (90+5.85*360/21:0.9cm) node {$s$};      
      \draw (90+0.3*360/21:0.9cm) node {$s'$};      
      \draw (90-3*360/21:1.2cm) node {$e_1$};      
      \draw (90+9*360/21:1.1cm) node {$e_2$};      
    \end{scope}

    \begin{scope}[shift={(4.2,0)}]
      \draw (90:1.9cm) node {\rm (iii)};      
      \node[name=s, shape=circle, minimum size=4.5cm, draw] {}; 
      \draw[thick] (90-6*360/21:1.5cm) to (90+6*360/21:1.5cm);
      \draw[thick] (90-7*360/21:1.5cm) to (90+5*360/21:1.5cm);
      \draw (90+7.2*360/21:0.95cm) node {$s$};      
      \draw (90+4.65*360/21:0.8cm) node {$s'$};      
    \end{scope}

  \end{tikzpicture} 
\]
\endgroup
\caption{The three parts of this figure correspond to Proposition \ref{pro:Ext}, parts (i)--(iii).  The indecomposable objects $s'$ and $s$ permit a triangle $\Sigma^{ -1 }s' \xrightarrow{ \delta } s \xrightarrow{} e \xrightarrow{} s'$ with $\delta \neq 0$.  In (i) and (ii) the object $e$ has indecomposable summands given by the dotted diagonals, and in (iii) we have $e=0$ since $\delta$ is an isomorphism.}
\label{fig:Ext}
\end{figure}
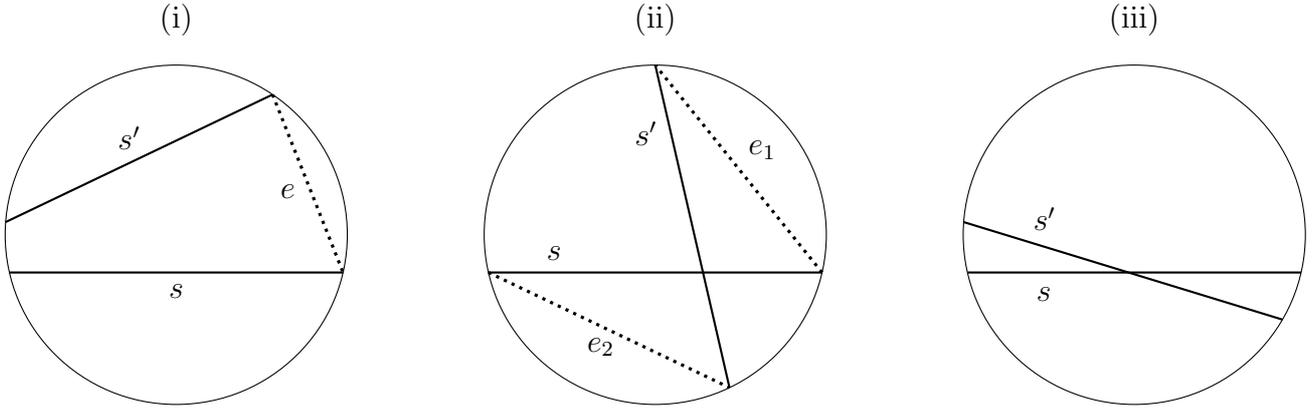
\end{Proposition}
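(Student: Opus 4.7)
\textit{Proof plan.}  The proposition is established in \cite[cor.\ 6.3]{CS3} and \cite[cor.\ 10.6 and fig.\ 11]{CSP2}, so the natural strategy is to extract it from the combinatorial formulas developed there.  I would organise the proof in three steps.

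\emph{Step 1: Reduction to the derived category.}  Writing $\cC = \cD^{\b}(\mod\,kQ)/\tau\Sigma^{w+1}$ and unwinding the orbit definition yields
\[
  \cC(\Sigma^{-1}s',s) \;\cong\; \bigoplus_{n \in \BZ} \cD^{\b}(\mod\,kQ)\bigl(\Sigma^{-1}s',(\tau\Sigma^{w+1})^n s\bigr).
\]
For Dynkin type $A_e$ the AR-quiver of $\cD^{\b}(\mod\,kQ)$ is $\BZ A_e$, and the $\Hom$-hammock from a fixed indecomposable is the well-known ``forward rectangle'': it contributes a single copy of $k$ at each indecomposable in a specific region and vanishes elsewhere.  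This at once forces $\dim_k \cC(\Sigma^{-1}s',s) \leqslant 1$ and identifies the unique $n$ (if any) for which the summand is nonzero.

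\emph{Step 2: Translation to the polygon.}  Using the bijection in Remark \ref{rmk:CPS2}(i) and the shift $\Sigma\{\dt{s},\ddt{s}\} = \{\dt{s}+1,\ddt{s}+1\}$ from Remark \ref{rmk:CPS2}(iii), the rectangle condition becomes a geometric condition relating $s$ and $\Sigma^{-1}s' = \{\dt{s}'-1,\ddt{s}'-1\}$.  Unpacking which orbit representative $(\tau\Sigma^{w+1})^n s$ falls inside the rectangle of $\Sigma^{-1}s'$ yields exactly the three cases listed in the proposition: (iii) when $s$ coincides with $\Sigma^{-1}s'$ itself, (i) when $s$ is non-crossing but meets $\Sigma^{-1}s'$ along one endpoint of $s'$, and (ii) when $s$ genuinely crosses $s'$ with endpoints lying on the arithmetic progressions $\dt{s}' - i(w+1)$ and $\ddt{s}' - j(w+1)$ dictated by admissibility.

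\emph{Step 3: Identifying the mapping cone.}  Case (iii) is immediate since $\delta$ is an isomorphism and $e = 0$.  For cases (i) and (ii), lift the nonzero morphism $\delta$ to $\cD^{\b}(\mod\,kQ)$, compute its cone there via the standard string/tree-module presentation for type $A$, and project back to $\cC$.  Equivalently, one can invoke the Ptolemy-type relations recorded in \cite{CS3,CSP2}: the cone of a ``crossing'' morphism is the direct sum of the two diagonals obtained by pivoting $s$ and $s'$ about their crossing vertices (case (ii)), while the cone in the adjacent-endpoint configuration consists of the single diagonal closing up the configuration (case (i)).  These are precisely the dotted diagonals in Figure \ref{fig:Ext}.

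\emph{Main obstacle.}  The principal technical difficulty is the orbit bookkeeping under $\tau\Sigma^{w+1}$: several diagonals in the polygon may lift to distinct indecomposables of $\cD^{\b}(\mod\,kQ)$ that the quotient subsequently identifies.  This is what produces the family of solutions parametrised by $(i,j)$ in case (ii) and forces the degenerate case (iii) to be listed separately from (i).  Carefully tracking which lifts contribute the unique nonzero summand in the orbit sum, and matching the resulting cone computation with the rotated diagonals in Figure \ref{fig:Ext}, is the main case analysis.
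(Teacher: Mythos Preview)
The paper does not prove this proposition at all: it is stated as a result ``due to \cite[cor.\ 6.3]{CS3} and \cite[cor.\ 10.6 and fig.\ 11]{CSP2}'' and no argument is given. Your proposal correctly identifies the same references and then goes further by sketching how one would actually extract the statement from them; the orbit-sum/hammock reduction and the Ptolemy-type cone computation you outline are indeed the standard ingredients behind the cited results, so your plan is sound but strictly exceeds what the paper itself supplies.
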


\begin{Lemma}
\label{lem:type_A_cover}
Let $s'$, $s$ be indecomposable objects of $\cC$.
\begin{enumerate}
\setlength\itemsep{4pt}

  \item  If $\cC( \Sigma^{ -1 }s',s ) \neq 0$ then each non-zero morphism $\Sigma^{ -1 }s' \xrightarrow{ \delta } s$ is a $( \Sigma^{ -1 }\langle s' \rangle )$-cover.

  \item  If $\cC( \Sigma^{ -1 }s',s ) = 0$ then $0 \xrightarrow{} s$ is a $( \Sigma^{ -1 }\langle s' \rangle )$-cover.

\end{enumerate}
\end{Lemma}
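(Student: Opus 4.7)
The plan is to reduce $\Sigma^{-1}\langle s' \rangle$ to the additive closure $\add(\Sigma^{-1}s')$ via two $\Hom$-computations from Proposition \ref{pro:Ext}, after which both parts of the lemma become elementary one-dimensional calculations.

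First I would carry out two invocations of Proposition \ref{pro:Ext}. Substituting $\Sigma s'$ for the proposition's ``$s'$'' and $s'$ for its ``$s$'', condition (iii) holds automatically because $s' = \{\dt{s'},\ddt{s'}\} = \{\dt{(\Sigma s')}-1,\ddt{(\Sigma s')}-1\}$; this yields $\End(s') = \cC(\Sigma^{-1}\Sigma s',s') = k$. Substituting $s'$ for both ``$s'$'' and ``$s$'', each of (i)--(iii) is easily excluded: $s'$ shares both endpoints with itself (ruling out (i)) and does not cross itself (ruling out (ii)), while (iii) would force $\{\dt{s'},\ddt{s'}\} = \{\dt{s'}-1,\ddt{s'}-1\}$ and hence $N \mid 2$, which is excluded by $N = (w+1)(e+1) - 2 \geqslant 4$. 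Hence $\cC(s',\Sigma s') \cong \cC(\Sigma^{-1}s',s') = 0$.

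From $\cC(s',\Sigma s') = 0$ it follows by induction on the filtration $(s')_n$ of Definition \ref{def:extension_closure} that $\langle s' \rangle = \add(s')$: given $(s')_{n-1} \subseteq \add(s')$, any $e \in (s')_n$ fits in a triangle $(s')^a \xrightarrow{} e \xrightarrow{} b \xrightarrow{} \Sigma (s')^a$ with $b \in \{s',0\}$, whose connecting morphism lies in $\cC(b,(\Sigma s')^a) = 0$, so the triangle splits and $e \in \add(s')$. Applying $\Sigma^{-1}$ gives $\Sigma^{-1}\langle s' \rangle = \add(\Sigma^{-1}s')$ and $\End(\Sigma^{-1}s') \cong \End(s') = k$. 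With this reduction, part (ii) is immediate: $\cC(\Sigma^{-1}s',s) = 0$ propagates to $\cC(\add(\Sigma^{-1}s'),s) = 0$, so $0 \xrightarrow{} s$ is vacuously a precover and is right minimal since $\End(0) = 0$. For part (i), Proposition \ref{pro:Ext} gives $\cC(\Sigma^{-1}s',s) = k$, so any non-zero $\delta$ is a $k$-basis; any $\phi : (\Sigma^{-1}s')^m \to s$ has components $\phi_i = \lambda_i\delta$ with $\lambda_i \in k$ and therefore factors as $(\Sigma^{-1}s')^m \xrightarrow{(\lambda_1,\ldots,\lambda_m)^{T}} \Sigma^{-1}s' \xrightarrow{\delta} s$. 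Right minimality follows at once: any $\eta \in \End(\Sigma^{-1}s') = k$ with $\delta\eta = \delta$ must satisfy $\eta = 1$, whence $\eta = \id$.

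The only step requiring any thought is the identification $\langle s' \rangle = \add(s')$, which rests entirely on the two vanishings from Proposition \ref{pro:Ext}. After that reduction the lemma is linear algebra over $k$ in dimensions $\leqslant 1$, and I do not anticipate any genuine obstacle.
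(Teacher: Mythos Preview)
Your proof is correct and follows essentially the same approach as the paper: both reduce to $\langle s' \rangle = \add(s')$ via the vanishing $\cC(s',\Sigma s') = 0$ obtained from Proposition~\ref{pro:Ext}, and then observe that $\cC(\Sigma^{-1}s',s) \in \{0,k\}$ makes the cover claims elementary. Your version simply spells out more detail, in particular the explicit computation $\End(s') = k$ and the precover/minimality verifications, whereas the paper leaves these to the reader; note that right minimality of a nonzero $\delta$ with indecomposable source also follows from the Krull--Schmidt property without computing $\End(s')$.
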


\begin{proof}
We have $\cC( s',\Sigma s' ) \cong \cC( \Sigma^{ -1 }s',s' ) = 0$ where the equality is by Proposition \ref{pro:Ext}.  Hence $\add( s' )$ is closed under extensions whence $\langle s' \rangle = \add( s' )$.  It follows that $\Sigma^{ -1 }\langle s' \rangle = \add( \Sigma^{ -1 }s' )$, so a $( \Sigma^{ -1 }\langle s' \rangle )$-cover is the same thing as an $\add( \Sigma^{ -1 }s' )$-cover.  This implies part (ii) of the lemma immediately, and it implies part (i) because $\cC( \Sigma^{ -1 }s',s ) \neq 0$ means $\cC( \Sigma^{ -1 }s',s ) = k$ by Proposition \ref{pro:Ext}.
\end{proof}

The following proposition gives a combinatorial description of left tilting of a $w$-simple minded system at an indecomposable object.  We leave it to the reader to formulate a dual description of right tilting.

\begin{figure}
\begingroup
\[
  \begin{tikzpicture}[scale=1.5]

    \begin{scope}[shift={(-4.2,0)}]
      \draw (90:1.9cm) node {\rm (i)};      
      \node[name=s, shape=circle, minimum size=4.5cm, draw] {}; 
      \draw[thick] (90-6*360/21:1.5cm) to (90+6*360/21:1.5cm);
      \draw[thick] (90-2*360/21:1.5cm) to (90+5*360/21:1.5cm);
      \draw[very thick,dotted] (90-2*360/21:1.5cm) to (90-6*360/21:1.5cm);    
      \draw (90-10.5*360/21:0.5cm) node {$s$};      
      \draw (90+1.5*360/21:0.95cm) node {$s'$};      
      \draw (90-3.8*360/21:0.70cm) node {$L_{ \{ s' \} }( s )$};      
    \end{scope}

    \begin{scope}[shift={(0,0)}]
      \draw (90:1.9cm) node {\rm (ii)};      
      \node[name=s, shape=circle, minimum size=4.5cm, draw] {}; 
      \draw[thick] (90-8*360/21:1.5cm) to (90+8*360/21:1.5cm);
      \draw[thick] (90-2*360/21:1.5cm) to (90+5*360/21:1.5cm);
      \draw (90-10.5*360/21:0.85cm) node {$L_{ \{ s' \} }( s ) = s$};      
      \draw (90+1.5*360/21:0.95cm) node {$s'$};      
    \end{scope}

    \begin{scope}[shift={(4.2,0)}]
      \draw (90:1.9cm) node {\rm (iii)};      
      \node[name=s, shape=circle, minimum size=4.5cm, draw] {}; 
      \draw[very thick,dotted] (90-1*360/21:1.5cm) to (90+6*360/21:1.5cm);
      \draw[thick] (90-2*360/21:1.5cm) to (90+5*360/21:1.5cm);
      \draw (90+3.6*360/21:1.15cm) node {$s'$};      
      \draw (0.15,0) node {$L_{ \{ s' \} }( s' ) = \Sigma^{ -1 }s'$};      
    \end{scope}

  \end{tikzpicture} 
\]
\endgroup
\caption{The three parts of this figure correspond to Proposition \ref{pro:left_tilting}, parts (i)--(iii).  The objects $s'$, $s$ are elements of a $w$-simple minded system $\cS$.  In (i) and (iii) the left tilts $L_{ \{ s' \} }( s )$ and $L_{ \{ s' \} }( s' )$ are given by the dotted diagonals, and in (ii) we have $L_{ \{ s' \} }( s ) = s$.}
\label{fig:L}
\end{figure}
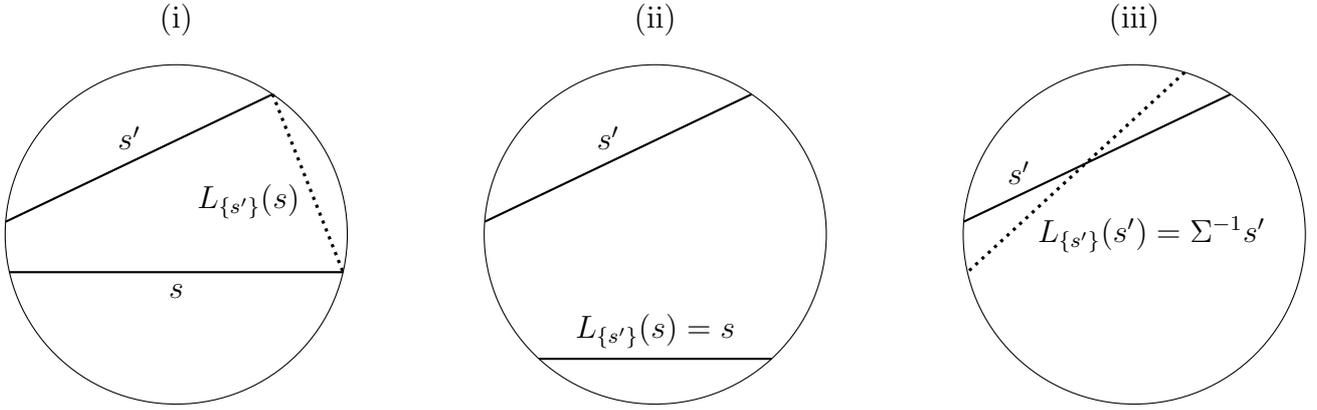
\begin{figure}
\begingroup
\[
  \vcenter{
  \xymatrix @+0.5pc @!0 {
    57 \ar[dr] && 08 \ar[dr] && {\color{green} 13} \ar[dr] && 46 \ar[dr] && {\color{red} 79} \ar[dr] && 02 \ar[dr] \\
    & 05 \ar[dr] \ar[ur] && 38 \ar[dr] \ar[ur] && {\color{red} 16} \ar[dr] \ar[ur] && 49 \ar[dr] \ar[ur] && 27 \ar[dr] \ar[ur] && 05 \\
    02 \ar[ur] && {\color{red} 35} \ar[ur] && 68 \ar[ur] && {\color{green} 19} \ar[ur] && 24 \ar[ur] && 57 \ar[ur] \\
                        }
          }
\]
\endgroup
\caption{This figure is part of Example \ref{exa:Type_A_Theorems_A_and_B} and shows the Auslander--Reiten quiver of $\cC_{-2}( A_3 )$, the $(-2)$-cluster category of Dynkin type $A_3$.  Coloured vertices show the $2$-simple minded system $\cS$ from Figure \ref{fig:SMS}.  The objects of $\cS$ are red, and the remaining indecomposable objects of $\langle \cS \rangle$ are green.}
\label{fig:SMS2}
\end{figure}
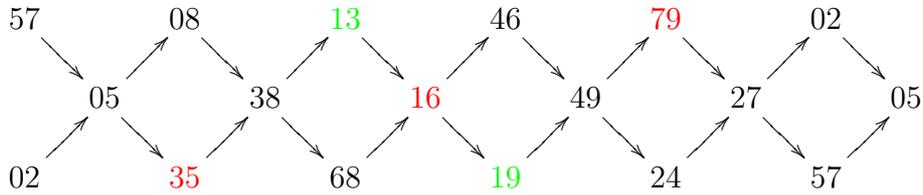
\begin{figure}
\begingroup
\[
  \begin{tikzpicture}
      \node[name=s, shape=regular polygon, regular polygon sides=10, minimum size=4.5cm, draw] {}; 
      \draw (-0*360/10:2.5cm) node {$0$};      
      \draw (-1*360/10:2.5cm) node {$1$};      
      \draw (-2*360/10:2.5cm) node {$2$};            
      \draw (-3*360/10:2.5cm) node {$3$};      
      \draw (-4*360/10:2.5cm) node {$4$};      
      \draw (-5*360/10:2.5cm) node {$5$};            
      \draw (-6*360/10:2.5cm) node {$6$};            
      \draw (-7*360/10:2.5cm) node {$7$};      
      \draw (-8*360/10:2.5cm) node {$8$};      
      \draw (-9*360/10:2.5cm) node {$9$};            
      \draw[red,thick] (-3*360/10:2.25cm) to (-5*360/10:2.25cm);
      \draw[red,thick] (-1*360/10:2.25cm) to (-6*360/10:2.25cm);
      \draw[red,thick] (-7*360/10:2.25cm) to (-9*360/10:2.25cm);
  \end{tikzpicture} 
\]
\endgroup
\caption{This figure is part of Example \ref{exa:Type_A_Theorems_A_and_B} and shows a $2$-simple minded system $\cS$ in $\cC_{-2}( A_3 )$.}
\label{fig:SMS}
\end{figure}
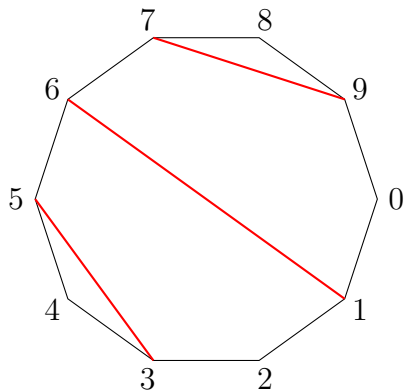
\begin{figure}
\begingroup
\[
\begin{tikzpicture}

\def \polygonradius {1.5cm}
\def \radiusa {8cm}
\def \radiusb {4.69cm}
\def \displacementangle {0}
\def \margin {0.1cm}

\foreach \navn/\vinkel/\diagonaleta/\diagonaletb/\diagonaltoa/\diagonaltob in {a/0/1/4/10/7,b/-36/8/1/7/4,c/-72/5/8/4/1,d/-108/2/5/1/8,e/-144/9/2/8/5,f/-180/6/9/5/2,g/-216/3/6/2/9,h/-252/10/3/9/6,i/-288/7/10/6/3,j/-324/4/7/3/10}
{
  \node[shape=regular polygon,regular polygon sides=10,minimum size=\polygonradius,draw] (\navn) at (\vinkel:\radiusa) {}; 
  \draw (\navn.corner \diagonaleta) to (\navn.corner \diagonaletb);
  \draw (\navn.corner \diagonaltoa) to (\navn.corner \diagonaltob);
}

\foreach \navn/\vinkel/\diagonaleta/\diagonaletb/\diagonaltoa/\diagonaltob in {u/-54-\displacementangle/2/5/10/7,v/-\displacementangle-270/9/2/7/4,x/-\displacementangle-126/6/9/4/1,y/-\displacementangle-342/3/6/1/8,z/-\displacementangle-198/10/3/8/5}
{
  \node[shape=regular polygon,regular polygon sides=10,minimum size=\polygonradius,draw] (\navn) at (\vinkel:\radiusb) {}; 
  \draw (\navn.corner \diagonaleta) to (\navn.corner \diagonaletb);
  \draw (\navn.corner \diagonaltoa) to (\navn.corner \diagonaltob);
}

\foreach \navna/\navnb/\udvinkel/\indvinkel in {a/b/-90-0*36-8/90-1*36+8,b/c/-90-1*36-8/90-2*36+8,c/d/-90-2*36-8/90-3*36+8,d/e/-90-3*36-8/90-4*36+8,e/f/-90-4*36-8/90-5*36+8,f/g/-90-5*36-8/90-6*36+8,g/h/-90-6*36-8/90-7*36+8,h/i/-90-7*36-8/90-8*36+8,i/j/-90-8*36-8/90-9*36+8,j/a/-90-9*36-8/90-10*36+8}
{
  \draw [shorten >=\margin,shorten <=\margin,->,>=latex] (\navna) to [out=\udvinkel,in=\indvinkel] (\navnb);
}

\foreach \navna/\navnb in {a/u,u/d,f/u,u/i,b/v,v/e,g/v,v/j,c/x,x/f,h/x,x/a,d/y,y/g,i/y,y/b,e/z,z/h,j/z,z/c}
{
  \draw [shorten >=\margin,shorten <=\margin,->,>=latex] (\navna) to (\navnb);
}

\end{tikzpicture}
\]
\endgroup
\caption{This figure is part of Example \ref{exa:Type_A_Theorem_C}.  The arrows show the left tilts at indecomposable objects of the $3$-simple minded systems in $\cC_{ -3 }( A_2 )$, the $(-3)$-cluster category of Dynkin type $A_2$.}
\label{fig:LA}
\end{figure}
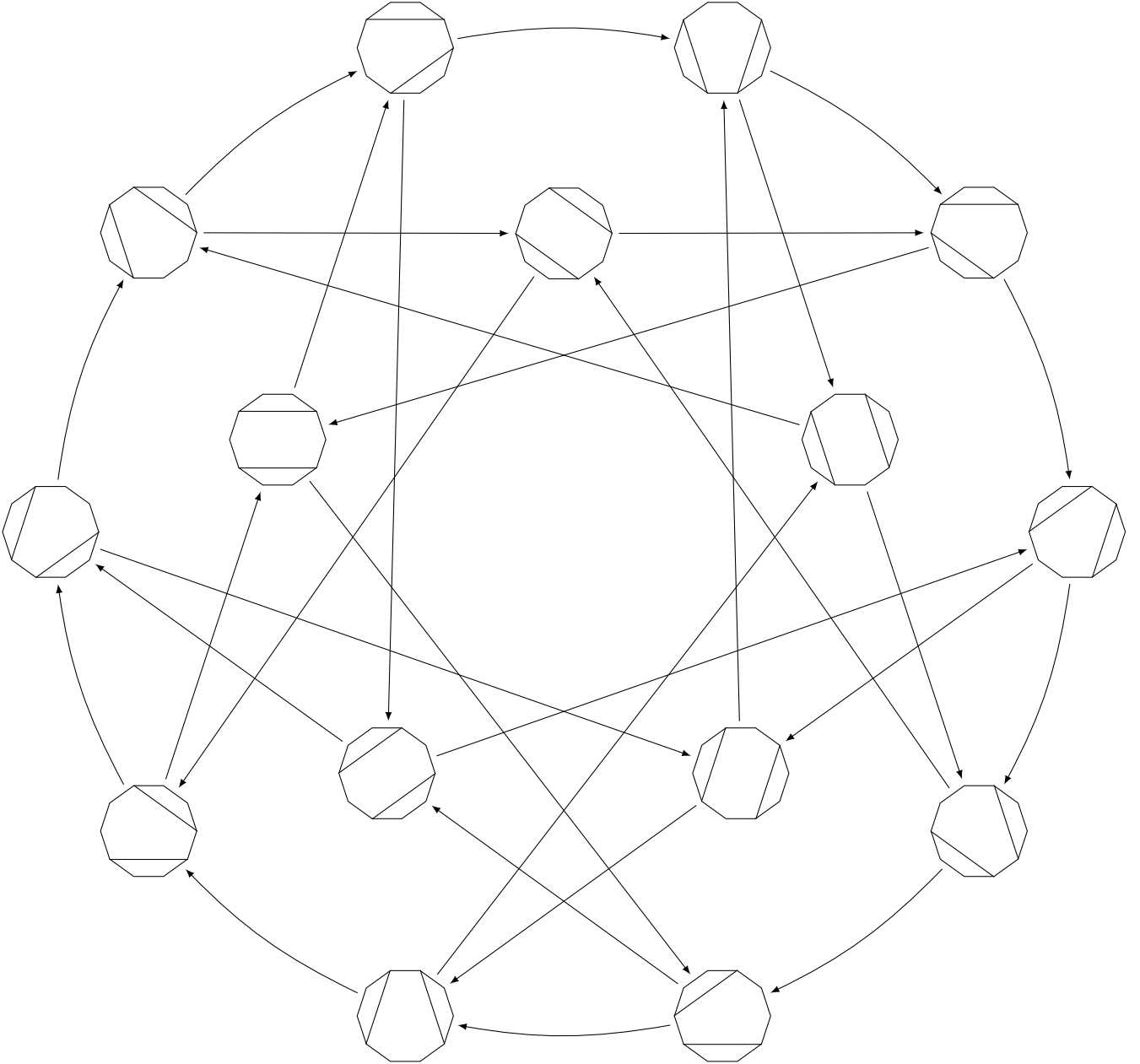

\begin{Proposition}
\label{pro:left_tilting}
Let $s' = \{ \dt{ s }',\ddt{ s }' \}$ and $s$ be elements of a $w$-simple minded system $\cS$ in $\cC$.  Then the left tilt $L_{ \{ s' \} }( s )$ is given as follows.
\begin{enumerate}
\setlength\itemsep{4pt}

  \item  If $s \in \cS \setminus \{ s' \}$ has $\dt{s}'-1$ or $\ddt{s}'-1$ as an end point, then $L_{ \{ s' \} }( s )$ is given by the dotted diagonal in Figure \ref{fig:L}(i).

  \item  If $s \in \cS \setminus \{ s' \}$ does not have $\dt{s}'-1$ or $\ddt{s}'-1$ as an end point, then $L_{ \{ s' \} }( s ) = s$, see Figure \ref{fig:L}(ii).

  \item  $L_{ \{ s' \} }( s' ) = \Sigma^{ -1 }s' = \{ \dt{ s }'-1,\ddt{ s }'-1 \}$, see Figure \ref{fig:L}(iii).

\end{enumerate}
\end{Proposition}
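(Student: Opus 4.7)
The proof is essentially a case analysis feeding into Definition \ref{def:tilting}(i), using the combinatorial model recalled in Remark \ref{rmk:CPS2}.

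First, part (iii) is immediate: by Definition \ref{def:tilting}(i), $L_{\{s'\}}(s') = \Sigma^{-1}s'$, and by Remark \ref{rmk:CPS2}(iii) the suspension acts on admissible diagonals by $\Sigma^{-1}\{\dt{s}',\ddt{s}'\} = \{\dt{s}'-1,\ddt{s}'-1\}$.

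For parts (i) and (ii), given $s \in \cS \setminus \{s'\}$, the plan is to determine $\cC(\Sigma^{-1}s',s)$ using Proposition \ref{pro:Ext} and then to invoke Lemma \ref{lem:type_A_cover} to identify the $(\Sigma^{-1}\langle s'\rangle)$-cover required by Definition \ref{def:tilting}(i). The key observation is that by Remark \ref{rmk:CPS2}(iv), distinct elements of an $w$-simple minded system neither cross nor share endpoints, so conditions (ii) and (iii) of Proposition \ref{pro:Ext} are automatically ruled out for $s \in \cS \setminus \{s'\}$. Hence only condition (i) of Proposition \ref{pro:Ext} is available, and it holds precisely when $s$ has $\dt{s}'-1$ or $\ddt{s}'-1$ as an endpoint.

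In the situation of Proposition \ref{pro:left_tilting}(i), condition (i) of Proposition \ref{pro:Ext} is met, so $\cC(\Sigma^{-1}s',s) = k$ and any nonzero morphism $\delta \colon \Sigma^{-1}s' \to s$ is a $(\Sigma^{-1}\langle s'\rangle)$-cover by Lemma \ref{lem:type_A_cover}(i). Completing $\delta$ to a triangle $\Sigma^{-1}s' \xrightarrow{\delta} s \to e \to s'$ and comparing with the defining triangle \eqref{equ:L_triangle} of Definition \ref{def:tilting}(i), we conclude $L_{\{s'\}}(s) \cong e$, which Proposition \ref{pro:Ext} identifies with the dotted diagonal in Figure \ref{fig:Ext}(i), reproduced as Figure \ref{fig:L}(i). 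In the situation of Proposition \ref{pro:left_tilting}(ii), none of the three conditions of Proposition \ref{pro:Ext} is met, so $\cC(\Sigma^{-1}s',s) = 0$; Lemma \ref{lem:type_A_cover}(ii) then gives $0 \to s$ as the $(\Sigma^{-1}\langle s'\rangle)$-cover, and the defining triangle \eqref{equ:L_triangle} degenerates to $0 \to s \xrightarrow{\id} L_{\{s'\}}(s) \to 0$, yielding $L_{\{s'\}}(s) = s$.

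There is no serious obstacle; the argument is a matter of correctly matching the hypotheses of Proposition \ref{pro:Ext} against the non-crossing and endpoint-disjointness conditions that characterise simple minded systems in $\cC_{-w}(Q)$. The only small care required is to verify that condition (iii) of Proposition \ref{pro:Ext}, namely $s = \{\dt{s}'-1,\ddt{s}'-1\}$, is incompatible with the hypothesis of part (ii)—which is automatic, since that $s$ would have $\dt{s}'-1$ as an endpoint.
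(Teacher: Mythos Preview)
Your proposal is correct and follows essentially the same approach as the paper's proof: both use Remark~\ref{rmk:CPS2}(iv) to rule out cases (ii) and (iii) of Proposition~\ref{pro:Ext} for $s \in \cS \setminus \{s'\}$, then invoke Lemma~\ref{lem:type_A_cover} to identify the $(\Sigma^{-1}\langle s'\rangle)$-cover and read off $L_{\{s'\}}(s)$ from Definition~\ref{def:tilting}(i). The only minor omission is that the paper begins by citing Lemma~\ref{lem:functorial_finiteness} to confirm the left tilt is defined at all, but this is implicit in your use of Lemma~\ref{lem:type_A_cover}.
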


\begin{proof}
By Lemma \ref{lem:functorial_finiteness} the subcategory $\Sigma^{ -1 }\langle s' \rangle$ is functorially finite, so the left tilt is defined; see Definition \ref{def:tilting}(i).  

Let $s \in \cS \setminus \{ s' \}$ be given.  Then $s$ and $s'$ do not cross and do not share an end point since they are both in the $w$-simple minded system $\cS$; see Remark \ref{rmk:CPS2}(iv). 

For part (i), assume that $s$ has $\dt{s}'-1$ or $\ddt{s}'-1$ as an end point.  Proposition \ref{pro:Ext}(i) says that $\cC( \Sigma^{ -1 }s',s ) = k$.  The proposition also gives a triangle $\Sigma^{ -1 }s' \xrightarrow{ \delta } s \xrightarrow{} e \xrightarrow{} s'$ with $\delta \neq 0$ where $e$ is given by the dotted diagonal in Figure \ref{fig:Ext}(i).  Lemma \ref{lem:type_A_cover}(i) says that $\delta$ is a $( \Sigma^{ -1 }\langle s' \rangle )$-cover, so $L_{ \{ s' \} }( s ) = e$ by Definition \ref{def:tilting}(i).  Hence $L_{ \{ s' \} }( s )$ is given by the dotted diagonal in Figure \ref{fig:L}(i).

For part (ii), assume that $s$ does not have $\dt{s}'-1$ or $\ddt{s}'-1$ as an end point.  Proposition \ref{pro:Ext}(i) says that $\cC( \Sigma^{ -1 }s',s ) = 0$.  There is a trivial triangle $0 \xrightarrow{} s \xrightarrow{ \id_s } s \xrightarrow{} 0$.  Lemma \ref{lem:type_A_cover}(ii) says that $0 \xrightarrow{} s$ is a $( \Sigma^{ -1 }\langle s' \rangle )$-cover, so $L_{ \{ s' \} }( s ) = s$ by Definition \ref{def:tilting}(i), corresponding to Figure \ref{fig:L}(ii).

Part (iii) holds by Definition \ref{def:tilting}(i) and Remark \ref{rmk:CPS2}(iii), corresponding to Figure \ref{fig:L}(iii).
\end{proof}

\begin{Example}
[A simple minded system and its extension closure]
\label{exa:Type_A_Theorems_A_and_B}
Suppose $e=3$ and $w=2$ whence $N=10$ and the polygon $P$ has vertices $\{ 0, 1, \ldots, 9 \}$.  Remark \ref{rmk:CPS2}(i)+(ii) permits to compute the Auslander--Reiten quiver of $\cC$, which is shown in 
Figure \ref{fig:SMS2} where $ij$ is shorthand for the diagonal $\{ i,j \}$.  Remark \ref{rmk:CPS2}(iv) permits writing down the $2$-simple minded system $\cS$ shown in red in Figure \ref{fig:SMS}.

Theorems \ref{thm:A} and \ref{thm:B} say that the extension closure $\langle \cS \rangle$ is a proper abelian subcategory of $\cC$ and that $\langle \cS \rangle \simeq \mod\,A$ for a finite dimensional algebra $A$.  Indeed, in Figure \ref{fig:SMS2} the objects of $\cS$ are red, the remaining indecomposable objects of $\langle \cS \rangle$ are green, and we have $\langle \cS \rangle \simeq \mod\,A$ where $A$ is the algebra $k( \circ \xrightarrow{ \alpha } \circ \xrightarrow{ \beta } \circ )/( \alpha\beta )$.  
\end{Example}

\begin{Example}
[Left tilting of $w$-simple minded systems]
\label{exa:Type_A_Theorem_C}
Suppose $e=2$ and $w=3$ whence $N=10$ and the polygon $P$ has vertices $\{ 0, 1, \ldots, 9 \}$.  Remark \ref{rmk:CPS2}(iv) permits writing down the $3$-simple minded systems in $\cC$; there are $15$ of these shown in the small $10$-gons in Figure \ref{fig:LA}.  Proposition \ref{pro:left_tilting} permits determining the left tilt of each $3$-simple minded system at each of its two indecomposable objects as shown by the arrows in Figure \ref{fig:LA}.
\end{Example}

%
%

\medskip
\noindent
{\bf Acknowledgement.}
The main item of Section \ref{sec:exact} is a theorem by Dyer on exact subcategories of triangulated categories, see \cite[p.\ 1]{Dyer} and Theorem \ref{thm:Dyer}.  We are grateful to Professor Dyer for permitting the proof of Theorem \ref{thm:Dyer} to appear in this paper.

We thank the referee for several useful corrections and suggestions.  We are grateful to Raquel Coelho Sim\~{o}es and David Pauksztello for patiently answering numerous questions and to Osamu Iyama and Haibo Jin for comments on an earlier version.

This work was supported by a DNRF Chair from the Danish National Research Foundation (grant DNRF156), by a Research Project 2 from the Independent Research Fund Denmark (grant 1026-00050B), by the Aarhus University Research Foundation (grant AUFF-F-2020-7-16), and by the Engineering and Physical Sciences Research Council (grant EP/P016014/1).

Thanks to Aleksandr and Sergei the Meerkats for years of inspiration.

\end{document}